\tikzset{anchorbase/.style={baseline={([yshift=-0.5ex]current bounding box.center)}}}
\tikzstyle directed=[postaction={decorate,decoration={markings,
    mark=at position #1 with {\arrow{>}}}}]
\tikzstyle rdirected=[postaction={decorate,decoration={markings,
    mark=at position #1 with {\arrow{<}}}}]
\tikzset{
    partial ellipse/.style args={#1:#2:#3}{
        insert path={+ (#1:#3) arc (#1:#2:#3)}
    }
}
\newcommand{\C}{\mathbb{C}}
\newcommand{\Z}{\mathbb{Z}}
\newcommand{\R}{\mathbb{R}}
\newcommand{\Q}{\mathbb{Q}}
\newcommand{\sym}{\mathrm{Sym}}
\newcommand{\symz}{\mathrm{Sym}_{\Z}}
\newcommand{\trace}{\mathrm{tr}}
\newcommand{\LR}[3]{\mathrm{c}_{#1#2}^{#3}}
\newcommand{\schur}[1]{\mathrm{s}_{#1}}
\newcommand{\fullsym}[1]{\mathrm{h}_{#1}}
\newcommand{\elsym}[1]{\mathrm{e}_{#1}}
\newcommand{\partition}[2]{\mathrm{P}(#1,#2)}
\newcommand{\partitions}[1]{\mathrm{P}(#1)}
\newcommand{\partitionall}{\mathrm{P}}
\newcommand{\Ybox}[1]{\mathrm{box}_{#1}}
\newcommand{\complementnorm}[1]{{#1}^{\mathrm{c}}}
\newcommand{\complementtrans}[1]{\widehat{#1}}
\newcommand{\transpose}[1]{\overline{#1}}
\newcommand{\A}{\mathbb{A}}
\newcommand{\B}{\mathbb{B}}
\newcommand{\X}{\mathbb{X}}
\newcommand{\Eq}{\mathbb{S}}
\newcommand{\roots}{\lambda}
\newcommand{\alphS}{\mathtt{\Sigma}}
\newcommand{\Webs}{V}
\newcommand{\Kom}{\boldsymbol{\mathrm{Kom}}}
\newcommand{\Komh}{\boldsymbol{\mathrm{K}}}
\newcommand{\Kar}{\boldsymbol{\mathrm{Kar}}}
\DeclareSymbolFont{extraup}{U}{zavm}{m}{n}
\DeclareMathSymbol{\varheart}{\mathalpha}{extraup}{86}
\newcommand{\Hgen}[1]{\left\llbracket #1 \right\rrbracket}
\newcommand{\HgenS}[1]{\left\llbracket #1 \right\rrbracket^{\alphS}}
\newcommand{\AB}{AB}
\newcommand{\BC}{BC}
\newcommand{\AandC}{Y}
\newcommand{\BA}{BA}
\newcommand{\CA}{CA}
\newcommand{\CB}{CB}
\newcommand{\wcolor}[1]{\mathtt{#1}}
\newcommand{\MM}[1]{\mathrm{MM}#1}
\newcommand{\coltanglesfree}{\boldsymbol{\mathrm{TD}}^{\ast}}
\newcommand{\coltangles}{\boldsymbol{\mathrm{TD}}}
\newcommand{\scalar}[1]{r(\mathtt{#1})}
\newcommand{\idem}[1]{\boldsymbol{\mathbbm{1}}_{\!\mathtt{#1}}}
\newcommand{\freefoamSet}{\boldsymbol{\mathrm{Foam}}^{\ast}} 
\newcommand{\freefoam}{\Z\boldsymbol{\mathrm{Foam}}^{\ast}} 
\newcommand{\freefoameq}{\Eq\boldsymbol{\mathrm{Foam}}^{\ast}} 
\newcommand{\freefoameqcl}{\Eq\boldsymbol{\mathrm{Foam}}^{\ast}(\mathrm{cl})}
\newcommand{\foam}[1]{\Eq\boldsymbol{\mathrm{Foam}}}
\newcommand{\foamcl}[1]{\Eq\boldsymbol{\mathrm{Foam}}(\mathrm{cl})} 
\newcommand{\foamS}[1]{\alphS\boldsymbol{\mathrm{Foam}}} 
\newcommand{\FoamSH}{\alphS\boldsymbol{\overline{\mathrm{Foam}}}}  
\newcommand{\Foam}[1]{\foam{#1}} 
\newcommand{\FoamS}[1]{\foamS{#1}} 
\newcommand{\fcd}{phase diagram }
\newcommand{\fcds}{phase diagrams }
\newcommand{\spec}{\mathrm{sp}^{\alphS}}
\newcommand{\slnn}[1]{\mathfrak{sl}_{#1}}
\newcommand{\id}{\mathrm{id}}
\newcommand{\modu}[1]{#1\text{-}\mathrm{mod}}
\newcommand{\Hom}{\mathrm{Hom}}
\newcommand{\End}{\mathrm{End}}
\newcommand{\eval}{\mathrm{ev}}
\newcommand{\grass}{\mathrm{Gr}}
\theoremstyle{definition}
\newtheorem{theoremm}{Theorem}[section]
\declaretheorem[style=definition,name=Theorem,qed=$\square$,numberlike=theoremm]{theorem}
\declaretheorem[style=definition,name=Lemma,qed=$\square$,numberlike=theoremm]{lemma}
\declaretheorem[style=definition,name=Proposition,qed=$\square$,numberlike=theoremm]{proposition}
\declaretheorem[style=definition,name=Corollary,qed=$\blacksquare$,numberlike=theoremm]{corollary}
\declaretheorem[style=definition,name=Lemma,qed=$\blacksquare$,numberlike=theoremm]{lemmab}
\declaretheorem[style=definition,name=Example,qed=$\blacktriangle$,numberlike=theorem]{example}
\declaretheorem[style=definition,name=Definition,qed=$\blacktriangle$,numberlike=theorem]{definition}
\declaretheorem[style=definition,name=Remark,qed=$\blacktriangle$,numberlike=theorem]{remark}
\declaretheorem[style=definition,name=Convention,qed=$\blacktriangle$,numberlike=theorem]{convention}
\declaretheorem[style=definition,name=Definition,numberlike=theorem]{definitionn}
\declaretheorem[style=definition,name=Example,numberlike=theorem]{examplen}
\declaretheorem[style=definition,name=Remark,numberlike=theorem]{remarkn}
\declaretheorem[style=definition,name=Convention,numberlike=theorem]{conventionn}
\declaretheorem[style=definition,name=Lemma,numberlike=theoremm]{lemman}
\declaretheorem[style=definition,name=Theorem,numberlike=theoremm]{theoremn}
\declaretheorem[style=definition,name=Remark,qed=$\blacktriangle$,numbered=no]{remarkintro}
\def\notation#1#2#3{\rlap{\hyperref[#1]{{\color{myblue}#2}}}\hspace*{8.2mm} \hbox to 47mm{#3\hfill}}
\newcommand{\makeqed}{\hfill\ensuremath{\square}}
\newcommand{\makeqedtri}{\hfill\ensuremath{\blacktriangle}}
\newcommand{\qedmake}{\hfill\ensuremath{\blacksquare}}
\definecolor{somecolor}{rgb}{0,0.8,0}
\definecolor{somecolor2}{rgb}{0,0.6,0.6}
\definecolor{somecolor3}{rgb}{0.4,0,0.6}
\definecolor{somecolor3b}{rgb}{0.65,0,0.85}
\definecolor{somecolor3c}{rgb}{0.85,0.65,0}
\definecolor{somecolor3d}{rgb}{0,0.85,0.65}
\definecolor{myblue}{rgb}{0,0,0.9}
\definecolor{mydarkblue}{RGB}{10,10,170}
\definecolor{orchid}{RGB}{143,40,194}
\definecolor{lava}{RGB}{207,16,32}
\newcommand{\comm}[1]{}
\newcommand{\purplebox}{\,\tikz[baseline=-.05,scale=0.25]{\draw[somecolor3b,fill=somecolor3b] (0,0) rectangle (1,1);}\,}
\newcommand{\goldenbox}{\,\tikz[baseline=-.05,scale=0.25]{\draw[somecolor3c,fill=somecolor3c] (0,0) rectangle (1,1);}\,}
\newcommand{\cyanbox}{\,\tikz[baseline=-.05,scale=0.25]{\draw[somecolor3d,fill=somecolor3d] (0,0) rectangle (1,1);}\,}
\numberwithin{equation}{section}
\let\fullref\autoref
\def\makeautorefname#1#2{\expandafter\def\csname#1autorefname\endcsname{#2}}
\begin{document}
\vbadness=10001
\hbadness=10001
\title[Functoriality of colored link homologies]{Functoriality of colored link homologies}
\author{Michael Ehrig}
\address{M.E.: School of Mathematics \& Statistics, Carslaw Building, University of Sydney, NSW 2006, Australia}
\email{michael.ehrig@sydney.edu.au}

\author{Daniel Tubbenhauer}
\address{D.T.: Mathematisches Institut, Universit\"at Bonn, Endenicher Allee 60, Room 1.003, 53115 Bonn, Germany. \newline
Current Address: Institut f\"ur Mathematik, Universit\"at Z\"urich, Winterthurerstrasse 190, Campus Irchel, Office Y27J32, CH-8057 Z\"urich, Switzerland, \href{www.dtubbenhauer.com}{www.dtubbenhauer.com}}
\email{daniel.tubbenhauer@math.uzh.ch}

\author{Paul Wedrich}
\address{P.W.: Imperial College London, Department of Mathematics, 6M50 Huxley Building, South Kensington Campus London SW7 2AZ, United Kingdom. \newline Current Address: Mathematical Sciences Institute, The Australian National University, John Dedman Building, Union Lane, Canberra  ACT  2601, Australia, \href{http://paul.wedrich.at}{paul.wedrich.at}}
\email{p.wedrich@gmail.com}

\subjclass{57M25, 81T45}

\begin{abstract}
We prove that
the bigraded, colored Khovanov--Rozansky 
type A link and tangle invariants are functorial 
with respect to link and tangle cobordisms. 
\end{abstract}
\maketitle
\tableofcontents
\renewcommand{\theequation}{\thesection-\arabic{equation}}
%
\section{Introduction}\label{sec:intro}
\addtocontents{toc}{\protect\setcounter{tocdepth}{1}}

Building on Khovanov's categorification of 
the Jones polynomial \cite{Kho}, Khovanov--Rozansky \cite{KR} introduced a
link homology theory categorifying the 
$\slnn{N}$ Reshetikhin--Turaev 
invariant. Their homology theory associates 
bigraded vector spaces 
to link diagrams, two of which are isomorphic whenever the diagrams 
differ only by Reidemeister moves. 
Hence, their homological link invariant 
takes values in isomorphism classes of bigraded vector spaces.

\subsection*{The first question\texorpdfstring{\nopunct}{}}\! 
posed by 
this construction is whether there is a natural 
choice of Reidemeister move isomorphisms, such that 
any isotopy of links in $\mathbb{R}^3$ gives rise to 
an explicit isomorphism between the Khovanov--Rozansky 
invariants, which only depends on the isotopy class of 
the isotopy. A positive answer to this question provides 
a functor:
\begingroup
\renewcommand*{\arraystretch}{1.35}
\[
\begin{Bmatrix}
\text{link embeddings in } \R^3
\\
\text{isotopies modulo isotopy}
\end{Bmatrix}
\longrightarrow 
\begin{Bmatrix} 
\text{bigraded vector spaces}
\\ 
\text{isomorphisms} 
\end{Bmatrix}
\]
\endgroup

\subsection*{The second question,\texorpdfstring{\nopunct}{}}\! building on the 
first, is whether this functor can be extended to a functor:
\begingroup
\renewcommand*{\arraystretch}{1.35}
\[
\begin{Bmatrix}
\text{link embeddings in } \R^3
\\
\text{link cobordisms in } 
\R^3\times [0,1] \text{ modulo isotopy}
\end{Bmatrix}
\longrightarrow 
\begin{Bmatrix} 
\text{bigraded vector spaces}
\\ 
\text{homogeneous linear maps} 
\end{Bmatrix}
\]
\endgroup

\subsection*{The goal of this paper\texorpdfstring{\nopunct}{}}\! is to answer 
both questions affirmatively, i.e. to prove 
the functoriality of Khovanov--Rozansky link homologies for $N\geq 2$ under link cobordisms. 

This result is new to the best of our knowledge, except in low-rank cases. For $N=2$, functorial theories were obtained by Caprau \cite{Cap}, Clark--Morrison--Walker \cite{CMW} and Blanchet \cite{Bla} after Khovanov's original theory turned out to have a sign ambiguity, see Jacobsson \cite{Jac} and Bar-Natan \cite{BN1}. However, with a bit more care, functoriality can also be achieved in the original construction, see Vogel \cite{Vog} and joint work with Stroppel \cite{EST2,EST1}. Functoriality for $N=3$ was proved by Clark \cite{Cla}.

We prove the general functoriality statement in a framework that is different to and more general than Khovanov--Rozansky's 
construction in \cite{KR}, as we will now explain.

\subsection{Foams}
Khovanov--Rozansky link homology theories 
can be defined, or at least described, in many different languages, ranging from those of algebraic geometry, (higher) 
representation theory and symplectic geometry to those of 
various incarnations of string theory. The construction most 
suitable for this paper is combinatorial and uses a graphical 
calculus of \textit{webs} and \textit{foams}.

Foams play precisely the same role for Khovanov--Rozansky's link homologies 
that Bar-Natan's cobordisms \cite{BN1} play 
for the original Khovanov homology. They can be seen---in a straightforward way---as a categorification of the Murakami--Ohtsuki--Yamada 
state-sum model \cite{MOY} for the 
Reshetikhin--Turaev link invariants of type A, or, equivalently, the graphical calculus for the 
corresponding representation category, see Cautis--Kamnitzer--Morrison \cite{CKM}. 

Foams were first used by Khovanov for the 
definition of an $\slnn{3}$ homology \cite{Kho3}.
Khovanov--Rozansky \cite{KR} made the observation 
that foams should be useful for understanding higher rank link homologies as well, and they extended foams accordingly \cite{KR3}. These were then used by  
Mackaay--Sto{\v{s}}i{\'c}--Vaz \cite{MSV} in the construction of the corresponding link homologies.

Later on, Queffelec--Rose \cite{QR}, building 
on joint work with Lauda \cite{LQR} and earlier work 
of Mackaay and his coauthors \cite{Ma1,MPT} on low-rank 
cases, placed foams in the context of an instance of 
categorical skew Howe duality. This led to a better 
understanding of foams as well as a comparison of foam-based 
link homologies with those obtained via other constructions, 
see the introduction of Mackaay--Webster \cite{MW} for a summary. 

A disadvantage of the approach of \cite{QR} is that their 2-categories of foams only describe a certain part of the web and foam calculus envisioned by Khovanov--Rozansky and Mackaay--Sto{\v{s}}i{\'c}--Vaz. However, Robert--Wagner \cite{RoW} have closed this gap while maintaining backward compatibility with \cite{QR}, and we 
will use their approach after explaining its 
essential features in \fullref{subsec:main-player}.

\subsection{Tangles and canopolises}
Proving the functoriality of a link homology theory 
essentially amounts to checking coherence relations between 
various ways of composing maps associated to Reidemeister 
moves and other basic link cobordisms that represent isotopic 
cobordisms. These relations are the \textit{movie moves} as presented 
by Carter--Saito \cite[Chapter 2]{CSbook}. As 
demonstrated by Bar-Natan \cite{BN1}, it is extremely useful 
to be able to perform the required computations locally, 
i.e. in a small portion of the link diagram. 

Fortunately, the construction of link homologies via 
foams immediately extends to the case of tangles and the 
invariants have essentially tautological planar composition 
properties. This is captured in Bar-Natan's notions of 
canopolises (a.k.a. planar algebras in categories) and 
canopolis morphisms, which we recall in \fullref{subsection:canopolis}. 

A key idea for our paper is Bar-Natan's 
insight \cite[Section 1.1.1]{BN1} that the 
canopolis framework allows a low-effort proof 
of the fact that Khovanov--Rozansky link homologies are 
functorial up to scalars. This amounts to a 
significant proof shortcut, as it then only 
remains to ensure that these scalars are equal to one. 
However, this is still a formidable challenge.

\subsection{Colors}
The Murakami--Ohtsuki--Yamada state-sum model in fact 
determines the $\slnn{N}$ Reshe\-tikhin--Turaev invariants 
of links whose components are colored by fundamental 
representations of quantum $\slnn{N}$, i.e. the quantum exterior powers of 
the usual vector representation. One categorical level up, 
foams immediately provide an analogous extension of Khovanov--Rozansky's 
original \textit{uncolored}---i.e. colored only 
with the vector representation---construction 
to the colored case. We will work in this 
generality and record the coloring of tangle components with exterior 
power representations by remembering only the exterior exponents and 
placing them as labels next to the respective strands. 

Colored Khovanov--Rozansky homologies have been 
first constructed by Wu \cite{Wu1} and Yonezawa \cite{Yon} 
in a technically demanding generalization 
of the approach in \cite{KR}. That their homological
invariants can be recovered via foams was 
proven in full generality by Queffelec--Rose \cite{QR}.

\subsection{Deformations}
Arguably the most important tool available for 
studying Khovanov homology is Lee's deformation \cite{Lee}. 
While producing uninteresting link invariants, this deformation naturally 
appears as the limit of a spectral sequence 
starting at Khovanov homology, which has been 
used to extract hidden topological information about knots, 
see e.g. Rasmussen \cite{Ras}.

A key idea for our paper is 
Blanchet's use of a Lee-type deformation 
for proving the functoriality of a modified 
version of Khovanov homology. In \cite{Bla} he first 
proved functoriality up to scalars along Bar-Natan's strategy and then computed these scalars---and showed them to be equal to one---by working in the much 
simpler setting of the deformation. This approach draws 
on the combinatorial interpretation of Lee's deformation 
provided by Bar-Natan--Morrison \cite{BNM}. 

The Khovanov--Rozansky link homologies of higher rank are subject to an even richer deformation theory, the exploration of which started with the work of Gornik \cite{Gor} and Mackaay--Vaz \cite{MV2} in the uncolored case, and Wu \cite{Wu2} in the colored case. These deformations were classified in joint work with Rose \cite{RW} and used in the proof of a family of physical conjectures about link homologies \cite{Wed3} by the third named author.

In this paper we draw on the deformed foam technology 
as developed in \cite{RW} to provide a functoriality 
proof for colored Khovanov--Rozansky invariants following Blanchet's strategy.

\subsection{Equivariance}
A central feature in Blanchet's argument 
is that, if functoriality holds up to scalars, then 
these scalars can be computed in a Lee-type deformation. 
To avoid arguing that these scalars are preserved along 
Lee-type spectral sequences, we will let our invariants take 
values in a homotopy category of chain complexes and not
take homology right away. One advantage of this is that the 
undeformed, colored link invariant, as well as all its deformations, can be 
obtained as specializations of a unifying \textit{equivariant} theory. 
We then prove that the equivariant theory is functorial up to scalars 
and that all its specializations inherit this property with the same scalars. 
It then only remains to compute these scalars in the 
Lee-type deformation, which is significantly simpler.

The first equivariant, or \textit{universal}, link 
homology can be traced back to Bar-Natan \cite{BN1} and 
Khovanov \cite{Kho2} and it encapsulated 
both Khovanov homology and Lee's deformation. The case $N=3$ 
was treated by Mackaay--Vaz \cite{MV2}. The extension to 
higher rank is due to Krasner \cite{Kra} in the uncolored 
case, and due to Wu \cite{Wu2} in the colored case. The 
adjective \textit{equivariant} refers to the fact that these link 
homologies associate $\mathrm{GL}_N$-equivariant 
cohomology rings of Grassmannians to colored unknots. 

Fortunately, the foam technology of Robert--Wagner \cite{RoW} 
is already formulated in the necessary generality to be compatible 
with the equivariant, colored Khovanov--Rozansky link homologies. 

\subsection{Integrality}
The foam-based link invariants of Mackaay--Sto{\v{s}}i{\'c}--Vaz \cite{MSV} can be defined 
integrally and they give rise to integral versions of Khovanov--Rozansky homologies. 
These invariants take values in bigraded abelian groups rather than 
in bigraded vector spaces. Similarly, 
in the colored case, Queffelec--Rose 
\cite[Proposition 4.10]{QR} have observed that their 
foam-based construction can also be defined over the 
integers. The same is true for the construction using Robert--Wagner's foams \cite{RoW}.

We have decided to present the results of this paper using the ground ring $\C$. This is for notational convenience as it allows us to treat the canopolis of equivariant foams and all its specializations in the same framework and using the same language. However, with minimal adjustments our proof of functoriality also works over $\Z$, and we will comment on these variations when necessary.

\subsection{Module structure}
The Khovanov--Rozansky homologies of links are modules over cohomology rings of Grassmannians that appear as the invariants of colored unknots. These actions have a natural interpretation using functoriality: Given a basepoint on a colored link, one can place a small unknot of matching color next to the basepoint. This has the effect of tensoring the link invariant with the corresponding cohomology ring. The action of this ring is then determined by the map associated to the cobordism that merges the unknot with the link at the basepoint.

These module structures carry additional information. For example, 
Hedden--Ni \cite{HN} have shown that they enable Khovanov homology to detect unlinks 
(it was later proven that the module structure is determined by the bigraded 
vector space structure in the case of the unlink \cite{BS})
, see also Wu \cite{Wu3} for a related result for the triply-graded homology. Module structures 
are also important for the comparison with Floer-theoretic link invariants, see 
e.g. Baldwin--Levine--Sarkar \cite{BLS}, and the 
construction of \textit{reduced}, colored Khovanov--Rozansky homologies \cite{Wed3}.

\subsection{Outlook}
We finish by commenting on some interesting 
topics that we will not investigate further in this paper.

First, it is necessary to emphasize that we prove functoriality 
under link cobordisms modulo isotopies in $\R^3\times [0,1]$. 
It is tempting to view Khovanov--Rozansky homology as an invariant 
of links in $S^3=\R^3\cup\{\infty\}$ and cobordisms in $S^3\times I$. 
Indeed, links generically miss the point $\infty$ and cobordisms between 
them generically miss $\infty\times [0,1]$. However, 
the same is no longer true for isotopies between link cobordisms.

As we have learned from Scott Morrison, proving the 
functoriality of Khovanov--Rozansky homology in $S^3$ 
would require checking only a single type of additional 
movie move, which 
however is non-local. Functoriality in $S^3$ is also 
the only missing ingredient for upgrading Khovanov--Rozansky 
link homologies to invariants of 4-manifolds with links in their 
boundary: it should allow the Khovanov--Rozansky 
invariants to be assembled into a \textit{disk-like 4-category} 
as in \cite{MoW}, which then gives rise to a $(4+\epsilon)$-dimensional TQFT.

A second question concerns the functoriality
properties of the extension of colored 
Khovanov--Rozansky link homologies to invariants of tangled webs, 
embedded in $\R^3$, under foams embedded in $\R^3\times [0,1]$ modulo 
isotopy. Tangled webs can naturally be accommodated in the
framework that we use in this paper, and foams embedded in 4-space can be encoded as movies 
of tangled webs. Isotopies of foams are represented by a finite collection of movie moves, see e.g. Carter \cite{Car}. We expect that the 
question of functoriality under foams can be 
investigated similar as in \fullref{sec:functorial}.

Next, Khovanov--Rozansky homologies 
can be extended to the case of colorings by other irreducible representations. Using the framework in the present 
paper, there are at least two distinct ways of doing this. One uses 
finite resolutions of these representations by fundamental representations 
and is analogous to Khovanov's construction for $N=2$ in \cite{Kho4}, 
see also Robert \cite{Rob} for the case $N=3$. In the other approach, the invariants for other colors are computed by inserting 
\textit{infinite twists} into fundamentally colored cables of the original 
link, see Rozansky \cite{Roz} for the case of $N=2$ and Cautis \cite{Cau2} 
for the general case. For both constructions it is an interesting question 
whether they satisfy functoriality properties and admit module structures. 
For another construction of symmetric link homologies, see Robert--Wagner \cite{RoW2}.

Finally, it is tempting to speculate about 
foam-based constructions of link homology theories 
that categorify the Reshetikhin--Turaev invariants 
outside of type A. Our wish list for such constructions 
includes that they should allow deformations along splittings 
of the corresponding Dynkin diagram, as it is the case in 
type A \cite{RW}. This might eventually help to prove functoriality 
properties for link homologies in other types, but more importantly, 
it gives hints how to construct the necessary foam theories. 

Very preliminary results in this direction have been obtained in \cite{ETW} 
where a kind of type D foams were constructed using the foams that 
appear in the present paper for $N=2$. However, it is not clear whether 
these foams can be used to define type D link homologies. Even 
one categorical level down, there are different web calculi 
outside of type A \cite{KUP,MO,ST}, not all of which 
are compatible with Reshetikhin--Turaev invariants.

\subsection{Structure of the paper} 
In \fullref{sec:glnfoams} we introduce
the canopolis of $\mathrm{GL}_N$-equivariant foams by first 
defining a free canopolis of foams in \fullref{subsection:canopolis} 
and then imposing additional relations in \fullref{subsec:main-player}. 
\makeautorefname{subsection}{Sections}
In \fullref{subsec:equi-rel} and \ref{subsec:special} we collect 
a number of foam relations, which we need in the proof of functoriality. 

\makeautorefname{subsection}{Section}

In \fullref{sec:foam} we recall the construction of the 
categorical tangle invariant and study the chain maps induced 
by Reidemeister moves. 

Finally, the functoriality proof in \fullref{sec:functorial} 
is split into the up-to-scalar check in \fullref{subsec:uptounits} 
followed by the computation of the scalars in \fullref{subsec:checkmm}.

\begin{remarkintro}\label{remark:colors-in-this-paper}
For the figures in this paper we have chosen 
colors that can also be distinguished in grayscale print. 
An important role in calculations will be played by 
the colors purple $\purplebox$, golden $\goldenbox$ and cyan $\cyanbox$, 
which will appear differently shaded.
\end{remarkintro}

\addtocontents{toc}{\protect\setcounter{tocdepth}{2}}

\noindent \textbf{Acknowledgements:} 
We are indebted to Christian Blanchet for proposing this project to P.W. during his thesis defense, to Hoel Queffelec for encouraging us to pursue the question of functoriality over the integers, and to Louis-Hadrien Robert and Emmanuel Wagner for sharing and explaining a draft of their paper about a combinatorial formula for evaluating closed foams. The latter two exchanges took place at the Isaac Newton Institute for Mathematical Sciences during the programme ``Homology theories in low dimensional topology'', which was supported by the UK Engineering and Physical Sciences Research Council [Grant Number EP/K032208/1].

We would also like to thank Nils Carqueville, Marco Mackaay and Pedro Vaz for their explanations of signs appearing 
in $2$-categories of matrix factorizations, as well as Mikhail Khovanov, Skype, Catharina Stroppel and Joshua Sussan for many helpful conversation 
about foams and categorification, and 
an anonymous referee and Louis-Hadrien Robert for comments 
on a draft of this paper. D.T. also thanks a still nameless toilet 
paper roll for explaining the concept of 
a canopolis to him.

\noindent
M.E. was partially supported by the Australian Research Council Grant DP150103431.
P.W. was supported by a Fortbildungsstipendium at 
the Max Planck Institute for Mathematics in Bonn and 
by the Leverhulme Research Grant RP2013-K-017 to Dorothy 
Buck, Imperial College London.

\section{\texorpdfstring{$\mathrm{GL}_N$}{GLN}-equivariant foams}\label{sec:glnfoams}
\subsection{Symmetric polynomials}\label{subsec:sym-poly}
We start by briefly recalling several notions regarding symmetric polynomials which we will need in this paper. 
Throughout, we fix $N\in\Z_{\geq 1}$.

Let $\A$ be an alphabet of size $|\A|=a$.
We denote by $\sym(\A)$ the $\C$-algebra of 
symmetric polynomials in the alphabet $\A$. Similarly, 
for alphabets $\A_i$, we denote by 
$\sym(\A_1|\cdots|\A_r)$ the $\C$-algebra of 
polynomials which are symmetric in each alphabet 
separately. We view these symmetric polynomial rings as being 
graded by assigning each variable the degree $2$.

We denote the $j^{\mathrm{th}}$ elementary symmetric polynomials in $\A$ by $\elsym{j}(\A)$, 
and the $j^{\mathrm{th}}$ complete symmetric polynomial in $\A$ by $\fullsym{j}(\A)$. Recall that 
the first $a$ of either give an algebraically independent set of 
generators for $\sym(\A)$, i.e.
\[
\sym(\A) 
\cong 
\C[\elsym{1}(\A),\dots,\elsym{a}(\A)]
\cong 
\C[\fullsym{1}(\A),\dots,\fullsym{a}(\A)],
\]
and that $\elsym{j}(\A)=0$ for $j>a$. 

\begin{definitionn}\label{definition:schurdiff} 
Let $\A$ and $\B$ be two disjoint 
alphabets of sizes $a$ and $b$. The complete 
symmetric polynomials in the difference of 
these two alphabets are elements of 
$\sym(\A|\B)$ given by the generating 
function:
\[
\sum_{k\geq 0}\,\fullsym{k}(\A-\B) x^k 
= 
\frac{\prod_{B\in \B}(1-B x)}{\prod_{A\in \A}(1-A x)}.
\] 
The complete symmetric polynomials in $\A$ 
are recovered in the special case 
$\B=\emptyset$, or 
under the homomorphism setting the 
variables in $\B$ to zero: 
\[
\fullsym{k}(\A)=\fullsym{k}(\A-\emptyset)=
\fullsym{k}(\A-\B)|_{\B\mapsto 0}.
\]

\end{definitionn}

\begin{example}\label{example:polynomials}
For $\A=\{A_1,A_2\}$ and $\B=\{B\}$ we get the following generating function:
\[
\frac{(1-Bx)}{(1-A_1x)(1-A_2x)}
=
\underbrace{1}_{\fullsym{0}(\A-\B)}
+ \underbrace{(A_1 + A_2 - B)}_{\fullsym{1}(\A-\B)} x
+ \underbrace{(A_1^2 + A_1 A_2 +A_2^2 -(A_1 + A_2)B)}_{\fullsym{2}(\A-\B)} x^2 
+ \mathfrak{O}(x^3)
\]
For $B=0$ we get the generating function for the complete symmetric polynomials in $\{A_1,A_2\}$.
\end{example}

\begin{conventionn}\label{convention:partitions}
Let $\partitionall$ denote the set of all integer partitions. For $a,b\in\Z_{\geq 0}$ 
we write $\partitions{a}$ for the set of partitions 
with at most $a$ parts, and $\partition{a}{b}$ for the 
set of partitions with at most $a$ parts, all of which are of size at most $b$.

Partitions $\alpha=(\alpha_1,\dots,\alpha_a)$ can be 
identified with Young diagrams with $\alpha_i$ specifying 
the numbers of boxes in the $i^{\mathrm{th}}$ row. Using this 
identification, let the total number of boxes be denoted 
by $|\alpha|$ and let $\transpose{\alpha}$ denote the 
Young diagram obtained by 
reflecting $\alpha$ along its main diagonal. If it is 
understood that $\alpha\in \partition{a}{b}$, we write 
$\complementnorm{\alpha}\in\partition{a}{b}$ for the Young 
diagram of the complement of $\alpha$, whose rows are given by $\complementnorm{\alpha}_{a+1-j} = 
b -\alpha_j$ and $\Ybox{a,b}=\complementnorm{\emptyset}$ for the full 
box Young diagram. Further, we write 
$\complementtrans{\alpha} = \transpose{\complementnorm{\alpha}}$ 
for the transpose of the complement of $\alpha$. 

Here is an 
illustrative example for $a=3, b=4$:
\[
\Ybox{3,4}=
\xy
(0,0)*{
\begin{Young}
 &  &  &  \cr
 &  &  &  \cr
 &  &  &  \cr
\end{Young}
} 
\endxy
\;,\quad
\alpha=
\xy
(0,0)*{
\begin{Young}
& & &\cr
& \cr
\cr
\end{Young}
}
\endxy
\;,\quad
\complementnorm{\alpha}=
\xy
(0,0)*{
\begin{Young}
& & \cr
&\cr
\end{Young}
}
\endxy
\;,\quad
\complementtrans{\alpha}=
\xy
(0,0)*{
\begin{Young}
& \cr
& \cr
\cr
\end{Young}
}
\endxy
\hspace{1.55cm}
\raisebox{-.55cm}{\makeqedtri}
\hspace{-1.55cm}
\]
\end{conventionn}

\begin{definitionn}\label{definition:schur-poly} 
Let $\alpha=(\alpha_1,\dots,\alpha_a)\in \partitions{a}$ be 
a partition. Then the Schur polynomial corresponding to 
$\alpha$ in the difference of alphabets $\A-\B$ is given by:
\begin{equation*}
\schur{\alpha}(\A-\B) = \det((\fullsym{\alpha_i+j-i}(\A-\B))_{1\leq i,j\leq a}).
\end{equation*}
The Schur polynomials in $\A$ 
are recovered in the special case $\B=\emptyset$ or 
under the homomorphism setting the variables in $\B$ to zero: 
\[
\schur{\alpha}(\A)=\schur{\alpha}(\A-\emptyset)=\schur{\alpha}(\A-\B)|_{\B\mapsto 0}.
\hspace{4.55cm}
\raisebox{-.05cm}{\makeqedtri}
\hspace{-4.55cm}
\]
\end{definitionn}

Recall that the Schur 
polynomials $\schur{\alpha}$ for $\alpha\in\partitions{a}$ 
form a basis for the $\C$-algebra $\sym(\A)$ 
with structure constants for the multiplication given by 
the \textit{Lit\-tle\-wood-Ri\-chard\-son coefficients} $\LR{\alpha}{\beta}{\gamma}$, 
see e.g. \cite[Section I.5]{MD}. This means for 
$\alpha, \beta\in \partitions{a}$ that
\[
\schur{\alpha}\schur{\beta}=
{\textstyle\sum_{\gamma\in\partitions{a}}}\,
\LR{\alpha}{\beta}{\gamma}
\schur{\gamma}.
\]

\begin{example}\label{example:unknot}(\cite[Section 2.3]{Wu2}, and also \cite{Las}.) 
Let $\grass_a$ denote the Grassmannian of 
$a$-dimensional subspaces in $\C^N$. This carries 
an action of $\mathrm{GL}_N$ and its $\mathrm{GL}_N$-equivariant 
cohomology can be presented as follows, see e.g. \cite[Lectures 6 and 7]{Ful}:
\begin{equation*}
H^*_{\mathrm{GL}_N}(\grass_a)
\cong
\frac{
\sym(\A|\Eq)}
{
\langle h_{N-a+i}(\A-\Eq)\mid i> 0
\rangle
}.
\end{equation*}
In fact, $H^*_{\mathrm{GL}_N}(\grass_a)$ is 
a graded, free module of rank $\binom{N}{a}$ 
over the equivariant cohomology of a point $H^*_{\mathrm{GL}_N}(pt)$, which is isomorphic to the symmetric polynomial ring $\sym(\Eq)$ in 
an alphabet $\Eq$ of size $N$. Moreover, $H^*_{\mathrm{GL}_N}(\grass_a)$ has a 
homogeneous bases over $\sym(\Eq)$ given by the Schur polynomials 
$\schur{\alpha}(\A)$ (or, alternatively, $\schur{\alpha}(\A-\Eq)$) 
with $\alpha\in\partition{a}{N-a}$.  

Let $\trace_a\colon H^*_{\mathrm{GL}_N}(\grass_a) \to \sym(\Eq)$ 
denote the $\sym(\Eq)$-linear projection onto the $\sym(\Eq)$-span of
the Schur polynomial corresponding to $\Ybox{a,N-a}(\A)$, 
adjusted by a sign that we will explain in 
\fullref{remark:weird-signs}.
Explicitly, this \textit{trace} satisfies
\begin{gather}\label{eq:trace-sign}
\trace_a(\schur{\alpha}(\A)\schur{\beta}(\A - \Eq)) = (-1)^{\binom{a}{2}} \delta_{\alpha,\complementnorm{\beta}},
\quad
\text{ for any }\alpha,\beta\in\partition{a}{N-a}.
\end{gather}
This means that it defines a non-degenerate $\sym(\Eq)$-bilinear form and, 
thus, a Frobenius algebra structure on $H^*_{\mathrm{GL}_N}(\grass_a)$. 
The corresponding (Poincar\'{e}) duality maps a basis 
element $\schur{\alpha}(\A)$ to 
$(-1)^{\binom{a}{2}} \schur{\complementnorm{\alpha}}(\A-\Eq)$. 
\end{example}

\begin{examplen}(\cite[Section 2.3]{Wu2}, and also \cite{Las}.)\label{example:digon} 
Let again $\A$ and $\B$ denote alphabets of size $a$ and $b$. Note that $\sym(\A|\B)$ is a free 
$\sym(\A\cup \B)$-module of rank $\binom{a{+}b}{a}$ 
with homogeneous bases given by the 
Schur polynomials $\schur{\alpha}(\A)$ (or, alternatively, 
$\schur{\alpha}(\A-\B))$ with $\alpha\in\partition{a}{b}$. 

Let $\zeta\colon \sym(\A|\B)\to\sym(\A\cup \B)$ 
denote the $\sym(\A\cup \B)$-linear projection 
onto the $\sym(\A\cup \B)$-span of the Schur 
polynomial corresponding to $\Ybox{a,b}(\A)$. This map, 
which is known as the Sylvester operator, 
is of degree $-2 a b$ and satisfies
\[
\zeta(\schur{\alpha}(\A)\schur{\beta}(\B)) 
= (-1)^{|\complementtrans{\alpha}|} \delta_{\alpha,\complementtrans{\beta}},
\quad 
\text{ for any }\alpha\in \partition{a}{b}\text{ and }\beta\in \partition{b}{a}.
\hspace{2.18cm}
\raisebox{-.00cm}{\makeqedtri}
\hspace{-2.18cm}
\] 
\end{examplen}

\subsection{Canopolises of webs and foams}\label{subsection:canopolis}
One main toolkit used in this paper is the canopolis formalism introduced 
by Bar-Natan \cite[Section 8.2]{BN1}, which can be 
seen as a categorification of Jones' planar 
algebra formalism \cite{Jon}. 
We will now describe a canopolis of webs and foams between them.
Throughout this section, let $S$ be a compact, planar surface, 
i.e. a surface with a fixed embedding into $\R^2$.

\begin{definitionn}\label{definition:web}
A web in $S$ is a 
finite, oriented, trivalent graph, 
properly embedded in $S$, together with a labeling of edges by elements 
of $\Z_{>0}$ satisfying a flow condition
at every trivalent vertex. Vertices 
with incoming labels $a,b$ and outgoing 
label $a+b$ are called merge vertices, the others 
split vertices. We illustrate 
them, and a trivial web, in the case where $S$ is a disk:
\begin{gather}\label{eq:mergesplit}
\xy
(0,0)*{\includegraphics[scale=1.15]{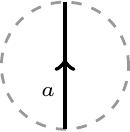}};
\endxy
\quad,\quad
\xy
(0,0)*{\includegraphics[scale=1.15]{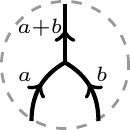}};
\endxy
\quad,\quad 
\xy
(0,0)*{\includegraphics[scale=1.15]{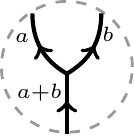}};
\endxy
\hspace{4.25cm}
\raisebox{-.5cm}{\makeqedtri}
\hspace{-4.25cm}
\end{gather}
\end{definitionn}

For the following definition, 
let $V$ and $W$ be two webs in $S$ 
with identical boundary data, i.e. 
they agree in a collar 
neighborhood of $\partial S$.

\begin{definition}\label{definition:foam}
A foam $F$ from $V$ to $W$ is a compact, two-dimensional 
CW-complex (the particular CW structure 
on $F$ is irrelevant in the following)
with finitely many cells, 
properly embedded in 
$S \times [0,1]$, such that
every interior point $x\in F$ 
has a neighborhood of one of the following 
three types, which are also illustrated in 
\fullref{fig:foamslocally}.
\smallskip
\begin{enumerate}[label=(\Roman*)]

\setlength\itemsep{.15cm}

\item A smoothly embedded two-dimensional manifold. The connected 
components of the set of such points are called the 
\textit{facets} of $F$. Each facet is required 
to be oriented, and we label it with 
an element of $\Z_{>0}$.

\item The letter $\mathrm{Y}$ (the union of three
distinct rays in $\R^2$, meeting in the origin) times $[0,1]$. The connected 
components of the set of such singular points are called the 
\textit{seams} of $F$. Every seam carries an orientation, which 
agrees with the orientation induced by two of the 
adjacents facets, say of label $a$ and $b$. Then we 
also require that the third adjacent facet is labeled 
by $a+b$ and that it induces the opposite orientation on the seam.

\item The cone on the one-skeleton of a tetrahedron. 
We call the cone points \textit{singular vertices}, and 
they are contained in the boundary of precisely four 
seams and six facets. 
\end{enumerate}

\begin{figure}[ht]
\begin{equation*}
\text{(I)}\colon
\xy
(0,0)*{\includegraphics[scale=1.15]{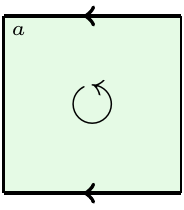}};
\endxy
\quad
,
\quad
\text{(II)}\colon
\xy
(0,0)*{\includegraphics[scale=1.15]{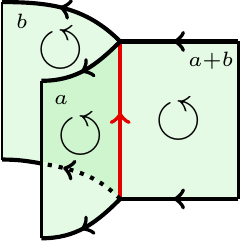}};
\endxy
\quad
,
\quad
\text{(III)}\colon
\xy
(0,0)*{\includegraphics[scale=1.15]{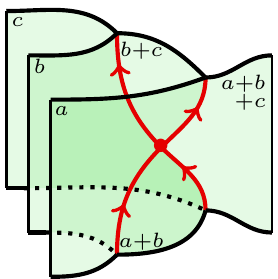}};
\endxy
\end{equation*}
\caption{Local models and orientation conventions.
}\label{fig:foamslocally}
\end{figure}

Furthermore, $F\colon V\to W$ is required to satisfy 
the following conditions:
\smallskip
\begin{itemize}

\setlength\itemsep{.15cm}

\item The (bottom) boundary of $F$ in $S\times\{0\}$
agrees with $V$ with matching 
labels and reversed orientations on the edges. 
The (top) boundary of $F$ in $S\times\{1\}$ agrees with $W$ with labels and 
induced orientations. In particular, singular 
seams are oriented down through merge 
vertices and up through split vertices in the boundary webs. 

\item $F$ restricted to $S \times [0,1]$ over 
a collar neighborhood of $\partial S$ is 
cylindrical, i.e. it agrees with the restriction 
of $V\times [0,1]$ to the same set.
\end{itemize}
\smallskip
We identify foams which differ by an 
ambient isotopy relative to the boundary in $S\times [0,1]$. Then the set of webs 
in $S$ with fixed boundary data assemble into a category 
with morphisms given by foams mapping from their bottom 
boundary web to their top boundary web. 
If $S$ is a disk, we call these \textit{disk categories}.
\end{definition}

\begin{remark}\label{remark:general-position}
Using classical Morse theory 
inductively on the skeletons of foams, one 
can show that one can isotope foams $F$ into generic 
position so that seams and facets have finitely 
many non-degenerate critical points for the height 
function. Then the slices $F\cap S\times \{z\}$ 
are webs for all but finitely many $z\in [0,1]$. 
See \fullref{rem:can-gen} for 
the local foams around these singularities.
\end{remark}

Webs in disks can be glued in a 
planar way reminiscent of Jones' planar algebras \cite{Jon}.
These \textit{planar algebra} operations are given by gluing disks with embedded webs into 
holed disks with embedded 
arcs, such that
web boundaries are glued in a way that 
respects the labelings and orientations. An example of a holed disk is
given in \fullref{fig:planalg}.

\begin{figure}[ht]
\begin{gather*}
\xy
(0,0)*{\includegraphics[scale=1.15]{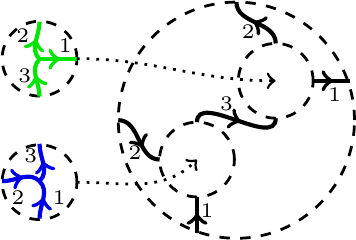}};
\endxy
\quad \longrightarrow \quad
\xy
(0,0)*{\includegraphics[scale=1.15]{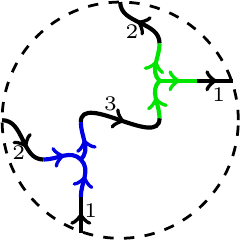}};
\endxy
\end{gather*}
\caption{Planar composition of webs.
}\label{fig:planalg}
\end{figure}

These planar algebra operations are associative, 
but there are no identity operations because we consider webs as embedded and not up to isotopy.
Note that every web in a disk is generated, via 
planar algebra operations, by trivial, merge and split 
webs as in \eqref{eq:mergesplit}.

More generally, we will call any collection of 
sets parametrized by boundary data of planar disks, together 
with associative composition operations given by gluing disks into 
decorated holed disks, a \textit{planar algebra}. If the collection of sets is 
replaced by a collection of categories and the planar algebra operations are 
multi-place functors, then we call this structure a \textit{canopolis}, c.f. Bar-Natan \cite[Section 8.2]{BN1}.

The planar algebra operations for webs via holed disks 
immediately extend to planar algebra 
operations on foams via holed cylindrical foams. 
These operations are functors since they commute with the 
categorical composition of stacking foams (see e.g. 
\fullref{fig:saddles}), and they are strictly associative, so the disk categories assemble into a canopolis.

\begin{figure}[ht]
\begin{gather*}
\xy
(0,.75)*{\includegraphics[scale=1.15]{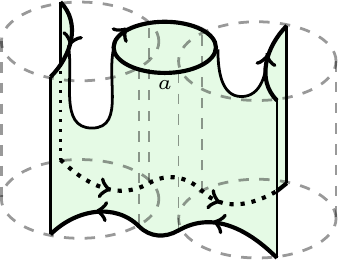}};
\endxy
\end{gather*}
\caption{Planar algebra composition of two saddles.}\label{fig:saddles}
\end{figure}

\begin{definition}\label{definition:foamcan}
We denote by $\freefoamSet$ the canopolis assembled 
from the categories of foams over disks, as described above.
\end{definition}

\begin{remark}\label{rem:can-gen}
For a foam 
in general position, the critical points for the height 
function have neighborhoods modeled on the following foams. 

If the critical point is contained in 
the interior of a facet, then we get Morse-type handle attachments
\begin{gather}\label{eq:foamgen1}
\xy
(0,0)*{\includegraphics[scale=1.15]{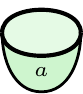}};
\endxy
\quad,\quad
\xy
(0,0)*{\includegraphics[scale=1.15]{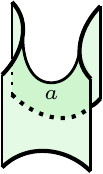}};
\endxy
\quad,\quad
\xy
(0,0)*{\includegraphics[scale=1.15]{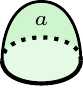}};
\endxy
\end{gather}
each with two possible orientations. If the 
critical point is contained in the interior of a 
seam, then we get the digon creation and annihilation, 
and zip and unzip foams
\begin{gather}\label{eq:foamgen2}
\xy
(0,0)*{\includegraphics[scale=1.15]{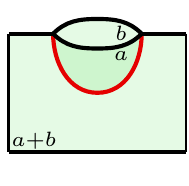}};
\endxy
\quad,\quad
\xy
(0,0)*{\includegraphics[scale=1.15]{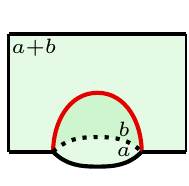}};
\endxy
\quad,\quad
\xy
(0,0)*{\includegraphics[scale=1.15]{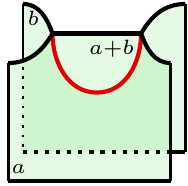}};
\endxy
\quad,\quad
\xy
(0,0)*{\includegraphics[scale=1.15]{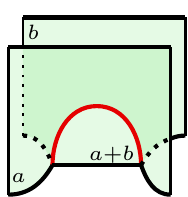}};
\endxy
\\
\label{eq:foamgen2b}
\xy
(0,0)*{\includegraphics[scale=1.15]{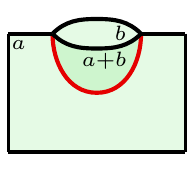}};
\endxy
\quad,\quad
\xy
(0,0)*{\includegraphics[scale=1.15]{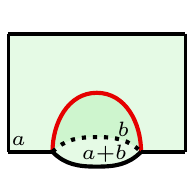}};
\endxy
\quad,\quad
\xy
(0,0)*{\includegraphics[scale=1.15]{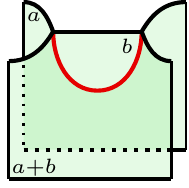}};
\endxy
\quad,\quad
\xy
(0,0)*{\includegraphics[scale=1.15]{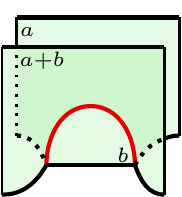}};
\endxy
\end{gather}
each of which admits two orientations. Finally, 
singular vertices of the following types appear: 
\begin{gather}\label{eq:foamgen3}
\xy
(0,0)*{\includegraphics[scale=1.15]{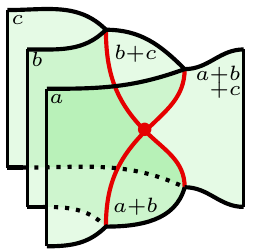}};
\endxy
\quad,\quad
\xy
(0,0)*{\includegraphics[scale=1.15]{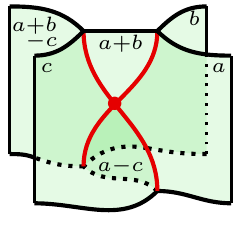}};
\endxy
\end{gather}
For the rightmost foam we assume that $a>c$. These 
foams furthermore exist in upside-down versions, 
and each one admits two orientations.
\end{remark} 

We let 
$\freefoam$ denote the $\Z$-linear extension of 
$\freefoamSet$ where, additionally, every 
facet of foams may be decorated by a partition. 
If gluing two foams results in a 
foam having two partitions on one facet, it is to be 
replaced by a $\Z$-linear combination 
of decorated foams according to a rule modeled on the 
multiplication of Schur polynomials: 
\begin{gather*}
\xy
(0,.8)*{\includegraphics[scale=1.15]{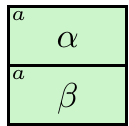}};
\endxy
=
\sum_{\gamma\in\partitions{a}}
\LR{\alpha}{\beta}{\gamma}
\xy
(0,0)*{\includegraphics[scale=1.15]{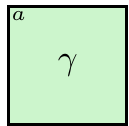}};
\endxy
\end{gather*}
Here $\LR{\alpha}{\beta}{\gamma}$ are the 
Littlewood-Richardson coefficients. Using this rule, we will abuse notation and place $\Z$-linear combinations of partitions on foam facets. We also use the interpretation of such linear combinations as symmetric polynomials in an $a$-element alphabet associated to the facet. This is especially useful when several facets are involved, in which case decorations can be encoded as partially symmetric polynomials.

\begin{definition}\label{definition:bidegree}
The morphisms in $\freefoam$ admit
a $\Z\times\Z$-grading. 
The bidegrees of the foam generators in
\fullref{rem:can-gen} are as follows.
\smallskip
\begin{itemize}

\setlength\itemsep{.15cm}

\item Cups and caps with label $a$ are of bidegree $(a^2,-a)$.

\item Saddles with label $a$ are of bidegree $(-a^2,a)$.

\item Digon creation and annihilation 
foams of label $a$ and $b$ as 
in \eqref{eq:foamgen2} are of bidegree $(-a b,0)$; 
the ones in \eqref{eq:foamgen2b} are of bidegree $(b(a+b),-b)$.

\item Zip and unzip foams of label $a$ and $b$ 
as in \eqref{eq:foamgen2} are of bidegree $(a b,0)$;
the ones in \eqref{eq:foamgen2b} are 
of bidegree $(-b(a+b),b)$.

\item The first foam in \eqref{eq:foamgen3} is of bidegree $(0,0)$, the 
other one of bidegree $(a b,0)$.

\item A decoration by a partition $\alpha$ is 
of bidegree $(2|\alpha|,0)$.
\end{itemize}
\smallskip
We define the degree $\deg{F}$ of a 
foam $F$ in $\freefoam$ to be the integer 
obtained by collapsing its bidegree from $(k,l)$ 
to $k+N l$.
\end{definition}

Alternatively, the bidegree of a foam 
can be defined as a weighted Euler 
characteristic, compare e.g. with \cite[Definition 2.3]{RoW}.

For the following definition we consider 
$H^*_{\mathrm{GL}_N}(pt)\cong\sym(\Eq)$ as our ground ring. Recall that this ring acts on $\mathrm{GL}_N$-equivariant cohomology in the natural way, see 
also \fullref{example:unknot}.

\begin{definitionn}\label{definition:eq-alph}
Let $\freefoameq$ be the additive 
closure of the $\sym(\Eq)$-linear, $\Z$-graded canopolis determined by the following data.
\smallskip
\begin{itemize}

\setlength\itemsep{.15cm}

\item Objects are formal $q$-degree shifts
of webs $q^s \Webs$, where $\Webs$ ranges over the objects of $\freefoam$, and $s\in\Z$.

\item Morphisms are $\sym(\Eq)$-linear 
combinations of foams in $\freefoam$ 
such that
\[
F\colon q^l V \to q^k W \Rightarrow \deg{F}=k-l.
\]
Hereby we consider the variables of $\Eq$ to be of degree two.

\item Categorical composition is given by 
the $\sym(\Eq)$-bilinear extension of the 
composition in $\freefoam$.

\item Planar composition is given by the 
$\sym(\Eq)$-multilinear extension of the planar 
composition in $\freefoam$.\makeqedtri
\end{itemize}
\end{definitionn}

The process of taking the additive 
closure of a canopolis amounts to 
allowing formal direct sums of objects as well as matrices of morphisms 
between them, with composition given by matrix multiplication. 

\subsection{The canopolis of \texorpdfstring{$\mathrm{GL}_N$}{GLN}-equivariant foams}\label{subsec:main-player}
In this section we will describe how to 
take a quotient of $\freefoameq$ that allows 
the construction of equivariant type A link homologies. 
The construction uses the foam evaluation of Robert--Wagner \cite{RoW} 
to run the \textit{universal construction} as presented by Blanchet--Habegger--Masbaum--Vogel \cite{BHMV} for categories of 
closed webs and foams between them. Finally, this is extended 
to the canopolis framework.

\subsubsection{Closed foam evaluation and the universal construction}
\label{sec:univconstr}
The main ingredient that we need is a 
way of evaluating closed foams. This gives a $\sym(\Eq)$-linear \textit{evaluation}
\[
\eval \colon \End_{\freefoameq}(\emptyset)\to \sym(\Eq).
\]
As our evaluation we choose the $\mathrm{GL}_N$-equivariant version of 
the explicit and combinatorial evaluation described by Robert--Wagner in \cite[Section 2.2]{RoW} where the reader 
can find the details. Actually, Robert--Wagner work with torus-equivariant cohomology and thus, with $\C[\Eq]$ as the ground ring. But this simply amounts to an extension of scalars from $\sym(\Eq)$.

\begin{remark}\label{remark:foam-evaluation}
We believe that Robert--Wagner's evaluation coincides 
(up to minor renormalization details) with the evaluation provided by the Kapustin--Li formula as 
formulated by Khovanov--Rozansky \cite{KR3} and used by Mackaay--Sto{\v{s}}i{\'c}--Vaz \cite{MSV} in the construction 
of foam categories and link homologies. We choose to
refer to Robert--Wagner because their evaluation is explicit, 
combinatorial and is already formulated in the equivariant setting.
\end{remark}

First, we focus on the disk category with empty 
boundary in $\freefoameq$, i.e. the category of 
closed webs in the disk and foams between them, which 
we denote by $\freefoameqcl$. Since this 
category is $\sym(\Eq)$-linear, we can consider the following 
representable functor to the monoidal category $\modu{\sym(\Eq)}$ of free 
$\sym(\Eq)$-modules:
\begin{align*}
\overline{\mathcal{F}} \colon  \freefoameqcl &\to \modu{\sym(\Eq)},\\
\Webs &\mapsto \Hom_{\freefoameq}(\emptyset,\Webs),\\
\left(V\xrightarrow{F} W\right) &\mapsto \left(\Hom_{\freefoameq}(\emptyset,V) \xrightarrow{F\circ -} \Hom_{\freefoameq}(\emptyset,W)\right).
\end{align*}

The $\sym(\Eq)$-modules $\overline{\mathcal{F}}(\Webs)$
associated to webs $\Webs$ are too large to be useful. 
However, 
given any $G\in \Hom_{\freefoameq}(\Webs,\emptyset)$, one considers 
the map
\[
\phi_G\colon \overline{\mathcal{F}}(\Webs) \to \sym(\Eq),
\quad
\phi_G(\emptyset\xrightarrow{F}\Webs)=\eval(G\circ F).
\]
This map is well-defined since the evaluation 
depends only on the combinatorial data of the foams and is, thus, invariant under 
isotopy. 
The intersection $\mathcal{I}(\Webs)=\bigcap_G \ker(\phi_G)$ taken 
over all $G\in \Hom_{\freefoameq}(\Webs,\emptyset)$ gives a submodule of 
$\overline{\mathcal{F}}(\Webs)$. One then sets 
\begin{gather}\label{eq:q-space}
\mathcal{F}(\Webs)=\overline{\mathcal{F}}(\Webs)/\mathcal{I}(\Webs).
\end{gather}

We think of $\mathcal{F}(\Webs)$ as the space of all embedded foams with boundary 
$\Webs$ modulo such $\sym(\Eq)$-linear 
combinations that evaluate to zero under 
arbitrary closures. The functor $\mathcal{F}$ 
extends to a $\sym(\Eq)$-linear functor
\[
\mathcal{F} \colon  \freefoameqcl \to \modu{\sym(\Eq)}.
\]
This might be regarded as a (singular) TQFT for the category 
of closed webs and foams between them.

In the following we collect some useful properties, 
which are implicit in \cite[Section 3]{RoW}:
\smallskip
\begin{itemize}

\setlength\itemsep{.15cm}

\item $\mathcal{F}(\emptyset) \cong \sym(\Eq)$.

\item $\mathcal{F}(V\sqcup W) \cong \mathcal{F}(V)\otimes_{\sym(\Eq)}\mathcal{F}(W)$.

\item The modules $\mathcal{F}(\Webs)$ are free over $\sym(\Eq)$ 
with graded rank computed by the MOY-evaluation of webs as in \cite{MOY}.

\item $\mathcal{F}$ restricts to a $1{+}1$-dimensional 
TQFT with values in $\sym(\Eq)$-mod on the subcategory 
of $\freefoameqcl$ consisting of $a$-labeled 
$1$-manifolds and $a$-labeled cobordisms between them. This 
TQFT is determined by the Frobenius algebra given by the 
$\mathrm{GL}_N$-equivariant cohomology of the Grassmannian 
$\grass_a$ from \fullref{example:unknot}.

\item The decorations on foam facets are 
related to basis elements of 
$H^*_{\mathrm{GL}_N}(\grass_a)$ via the $\sym(\Eq)$-linear composite isomorphism
\begin{gather*}
\begin{aligned}
H^*_{\mathrm{GL}_N}(\grass_a) \cong \frac{\sym(\A|\Eq)}{\langle h_{N-a+i}(\A-\Eq)\mid i> 0 \rangle}   
&\xrightarrow{\cong} 
\mathcal{F}(\bigcirc_a),
\\
\schur{\alpha}(\A)&\mapsto
\xy
(0,0)*{\includegraphics[scale=1.15]{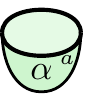}};
\endxy,
\quad
\alpha\in \partitions{a,N-a},
\end{aligned}
\end{gather*}
where $\bigcirc_a$ denotes the $a$-labeled circle object and 
$\mathcal{F}(\bigcirc_a)$ denotes the corresponding quotient space as in 
\eqref{eq:q-space}, in which the illustrated cup foam lives.
\end{itemize}

\subsubsection{Foams modulo relations}
\label{sec:foamsmodrel}
Instead of continuing to work with the functor $\mathcal{F}$, one can form a quotient 
category of webs and foams, by identifying foams that 
are sent to equal $\sym(\Eq)$-linear maps under $\mathcal{F}$. To this end, 
we define $\foamcl{N}$ to be the category 
with the same objects as $\freefoameqcl$, but with morphism spaces
\[
\Hom_{\foamcl{N}}(V,W) = \Hom_{\freefoameqcl}(V,W)/ \ker(\mathcal{F}).
\]
We denote the functor induced by $\mathcal{F}$ on $\foamcl{N}$ by the same symbol.

The category $\foamcl{N}$ is obtained from 
$\freefoameqcl$ by imposing the relations in $\ker(\mathcal{F})$. Many of these relations are of a local nature and some important ones are listed in the next section. Before that, however, we would like to extend this quotient 
to the canopolis framework. 

\subsubsection{Canopolization}

Let $F\colon V \to W$ be an arbitrary foam in $\freefoameq$ between 
webs with boundary. Abstractly, $F$ can be considered as a foam from the empty web 
to the closed web formed by its boundary 
$\partial F =  W\cup_\phi\overline{V}$, which we consider as embedded 
in a disk. Here $\overline{V}$ denotes the web $V$ reflected in a line 
and $\cup_\phi$ stands for the planar algebra operation which connects 
the appropriate boundary points of the webs $W$ and $\overline{V}$. 
Accordingly, there exist invertible canopolis operations:

\begin{gather}\label{eq:bending-trick}
\mathrm{bend}\colon \Hom_{\freefoameq}(V,W) 
\xrightarrow{\cong} 
\Hom_{\freefoameq}(\emptyset,W\cup_\phi\overline{V}).
\end{gather}

Informally, such an operation is given by \textit{bending} (or \textit{clapping}) the entire boundary 
of the foam to the top of the cylinder. In the canopolis, this can 
be achieved by planar composition with the identity foam on 
$\overline{V}$ and then pre-composing with a \textit{cup foam} that 
is obtained by rotating $V$ along a half-circle. The inverse 
operation is given by bending the part of the boundary down again. 
These operations are inverse because their composites transform foams only by isotopies.
Here is a prototypical example:
\[
\xy
(0,0)*{\includegraphics[scale=1.15]{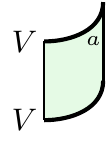}};
\endxy
\;
\xrightarrow{\text{bend}}
\;
\xy
(0,0)*{\includegraphics[scale=1.15]{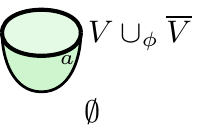}};
\endxy
\]

We define 
\[
\mathcal{I}(V,W)= \mathrm{bend}^{-1}(\mathcal{I}(W\cup_\phi\overline{V}))\subset \Hom_{\freefoameq}(V,W).
\]
It is easy to see that the collection of submodules $\mathcal{I}(V,W)$ is preserved under arbitrary canopolis operations in $\freefoameq$. 

\begin{definition}\label{definition:foam-canopolis-rels}
Let $\foam{N}$ denote the $\sym(\Eq)$-linear, $\Z$-graded canopolis 
obtained as a quotient of the canopolis 
$\freefoameq$ by the canopolis ideal determined 
by the collection $\mathcal{I}(V,W)\subset \Hom_{\freefoameq}(V,W)$.
\end{definition}

The subcategory of webs and foams without boundary in $\foam{N}$ is naturally identified with $\foamcl{N}$ as introduced above. 

\begin{remarkn}\label{rem:gradhom} 
The graded $\sym(\Eq)$-rank of the morphism space 
$\Hom_{\foam{N}}(V,W)$ can be 
computed via the \textit{bending trick} \eqref{eq:bending-trick} as a rescaled MOY-evaluation 
of the closed web $W\cup_\phi\overline{V}$. The rescaling 
depends on the number and labels of critical points introduced 
in the bending. From this, one can deduce the complex dimensions of the graded components of the morphism spaces in 
$\Hom_{\foam{N}}(V,W)$. We give some examples that are used later:
\smallskip
\begin{itemize}

\setlength\itemsep{.15cm}

\item The endomorphism space of a web without vertices, i.e. a trivial tangle, is one-dimensional in degree zero, spanned by the identity foam.

\item The morphism spaces between the empty web and a circle of label $a$ are one-dimensional in degree $-a(N-a)$, spanned by cup and cap foams respectively.

\item The morphism spaces between the two distinct webs that consist of two anti-parallel strands of label $a$ are one-dimensional in degree $a(N-a)$, spanned 
by the saddle foam.\makeqedtri
\end{itemize}
\end{remarkn}

\begin{remark}\label{remark:integrality-1}
(Integrality.) In fact, the closed foam evaluation of 
Robert--Wagner takes values in the ring of 
symmetric polynomials in $\Eq$ with integer 
coefficients $\symz(\Eq)$, see \cite[Main Theorem]{RoW}.
Accordingly, all constructions in this section can 
be performed over the integers as well, 
and all morphism spaces are free $\symz(\Eq)$-modules.
\end{remark}

\subsection{\texorpdfstring{$\mathrm{GL}_N$}{GLN}-equivariant foam relations}\label{subsec:equi-rel}

Below we collect a number of relations 
that hold in $\foam{N}$ and that we will need for our main result. In writing these relations 
we shall decorate foam facets by symmetric 
polynomials. These correspond to $\sym(\Eq)$-linear combinations 
of Schur polynomials, which in turn 
correspond to partitions. We will switch freely between these conventions.

\makeautorefname{subsubsection}{Sections}

We start with the following two lemmas, which follow directly from the discussion in 
\fullref{sec:univconstr} and \ref{sec:foamsmodrel}.

\makeautorefname{subsubsection}{Section}

\begin{lemmab}\label{lemma:neck-cut}
The \textit{neck-cutting} and \textit{sphere relations} hold in $\foam{N}$:
\begin{gather}\label{eq:neckcut}
\xy
(0,0)*{\includegraphics[scale=1.15]{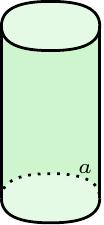}};
\endxy
\!
=
(-1)^{\binom{a}{2}}
\!\!\!\!\!\!\!\!
\sum_{\alpha\in\partition{a}{N-a}}
\xy
(0,0)*{\includegraphics[scale=1.15]{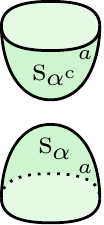}};
\endxy
\quad,\quad
\xy
(0,0)*{\includegraphics[scale=1.15]{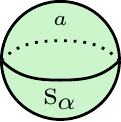}};
\endxy
= \begin{cases}
(-1)^{\binom{a}{2}}, \text{ if } \alpha=\Ybox{a,N-a} ,\\
0, \text{ otherwise.}
\end{cases}
\end{gather}
In the neck-cutting relations we mean $\schur{\complementnorm{\alpha}}=\schur{\complementnorm{\alpha}}(\A-\Eq)$, 
but $\schur{\alpha}=\schur{\alpha}(\B)$, where 
$\A$ and $\B$ are the alphabet on the cup and cap respectively. 
The sphere relations are true 
both for Schur polynomials in $\A$ as well as those in $\A-\Eq$.\qedhere
\end{lemmab}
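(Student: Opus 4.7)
The plan is to derive both relations from the universal construction of $\foam{N}$ in Sections~2.3.1 and~2.3.2, using the identification of $\mathcal{F}(\bigcirc_a)$ with $H^*_{\mathrm{GL}_N}(\grass_a)$ as a Frobenius algebra from \fullref{example:unknot}.

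For the sphere relations, I would argue directly. A decorated sphere of label $a$ and decoration $\schur{\alpha}$ is a closed foam, so its class in $\foam{N}$ is determined by the value of $\eval$. Reading the sphere as a decorated cup capped off by a cap foam, and using that the cap foam implements the trace $\trace_a$ while cup decorations by $\schur{\alpha}(\A)$ or $\schur{\alpha}(\A-\Eq)$ correspond to the respective basis elements listed in \fullref{sec:univconstr}, the sphere evaluates to $\trace_a(\schur{\alpha}(\A))$ or $\trace_a(\schur{\alpha}(\A-\Eq))$. Applying \eqref{eq:trace-sign} with $\beta=\emptyset$ (respectively with $\alpha=\emptyset$ on the $\A$-side) yields exactly $(-1)^{\binom{a}{2}}\delta_{\alpha,\Ybox{a,N-a}}$ in both conventions.

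For the neck-cutting relation, my strategy is to reduce the claim, which a priori is a local identity inside an arbitrary ambient foam, to an identity in the endomorphism algebra $\End_{\foam{N}}(\bigcirc_a)$. By definition of the canopolis ideal and the bending trick \eqref{eq:bending-trick}, it suffices to check that both sides represent the same element of this algebra. The left-hand side is a cylinder, hence the identity. The right-hand side, obtained by composing a decorated cap with a decorated cup, equals the image of $(-1)^{\binom{a}{2}}\sum_{\alpha\in\partition{a}{N-a}}\schur{\complementnorm{\alpha}}(\A-\Eq)\otimes\schur{\alpha}(\B)$ under the multiplication inherited from the Frobenius structure. Since \eqref{eq:trace-sign} exhibits $\{\schur{\alpha}(\A)\}_{\alpha\in\partition{a}{N-a}}$ and $\{(-1)^{\binom{a}{2}}\schur{\complementnorm{\alpha}}(\A-\Eq)\}_{\alpha\in\partition{a}{N-a}}$ as bases dual with respect to $\trace_a$, the standard dual-basis formula for a Frobenius algebra identifies this sum with the image of the unit under the coproduct, i.e.\ with the identity on $\bigcirc_a$.

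The main obstacles I anticipate are bookkeeping rather than conceptual: tracking the sign $(-1)^{\binom{a}{2}}$ arising from the normalization of $\trace_a$ consistently through the bending trick, and keeping straight which alphabet (the cup alphabet $\A$ or the cap alphabet $\B$, with or without the subtraction of $\Eq$) labels each facet so that the Poincar\'e duality stated at the end of \fullref{example:unknot} is applied in the correct direction.
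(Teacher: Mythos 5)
Your proposal is correct and follows the same route the paper intends: the paper gives no separate argument but states that the lemma "follows directly from the discussion in Sections 2.3.1 and 2.3.2", i.e. from the universal construction, the identification of closed-web morphisms via $\mathcal{F}$, and the Frobenius algebra structure on $H^*_{\mathrm{GL}_N}(\grass_a)$ with the signed trace \eqref{eq:trace-sign} of \fullref{example:unknot}. Your dual-basis computation for neck-cutting and the $\beta=\emptyset$ (resp. $\alpha=\emptyset$) specialization of \eqref{eq:trace-sign} for the sphere are exactly the details being left implicit there.
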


We explain below in \fullref{remark:weird-signs} why the 
sign $(-1)^{\binom{a}{2}}$ is necessary.

\begin{lemman}\label{lem:neckcutsphere} 
The following relations hold in $\foam{N}$.
\begin{gather}
\xy
(0,0)*{\includegraphics[scale=1.15]{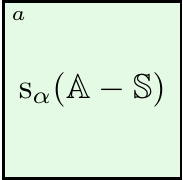}};
\endxy\;=
0,\text{ if }\alpha\notin\partition{a}{N-a} \text{ or if } a>N.
\hspace{3.4cm}
\raisebox{-.9cm}{\qedmake}
\hspace{-3.4cm}
\end{gather}
\end{lemman}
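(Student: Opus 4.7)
The plan is to show that the displayed foam lies in the canopolis ideal cutting $\foam{N}$ out of $\freefoameq$. By the bending trick \eqref{eq:bending-trick}, this reduces to checking that every closed foam built from the displayed configuration evaluates to zero under $\eval$.

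My strategy is to apply the neck-cutting relation from \fullref{lemma:neck-cut} along a small circle encircling the facet that carries the decoration $\schur{\alpha}$. This rewrites any closure as a sum of closed foams in which the decorated piece appears as a sphere decorated by a product $\schur{\alpha}\cdot\schur{\beta}$, while the remainder of the foam acquires the dual decoration $\schur{\complementnorm{\beta}}(\A-\Eq)$ with $\beta\in\partition{a}{N-a}$. Expanding via Littlewood-Richardson gives $\schur{\alpha}\schur{\beta}=\sum_{\gamma}\LR{\alpha}{\beta}{\gamma}\schur{\gamma}$, and the sphere relation in \eqref{eq:neckcut} then forces each summand to vanish unless some $\gamma$ equals $\Ybox{a,N-a}$.

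Now the case analysis handles each hypothesis in turn. If $a>N$, then $\Ybox{a,N-a}$ has negative column count and does not exist as a Young diagram, so no $\gamma$ can match it and every sphere vanishes automatically. If $a\leq N$ but $\alpha$ has more than $a$ parts, then $\schur{\alpha}(\A)=0$ already at the level of symmetric polynomials in the $a$-variable alphabet on the facet, by the Jacobi--Trudi determinant formula of \fullref{definition:schur-poly}, so the decoration is zero before we start. Finally, if $\alpha\in\partitions{a}$ but $\alpha_1>N-a$, then every $\gamma$ appearing with nonzero Littlewood--Richardson coefficient satisfies $\gamma\supseteq\alpha$; in particular $\gamma_1\geq\alpha_1>N-a$, ruling out $\gamma=\Ybox{a,N-a}$ and again forcing every sphere to vanish. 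Combining the three cases establishes that every closure evaluates to zero, which proves the relation.

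The main obstacle is organising this case analysis uniformly, since the two hypotheses ($\alpha\notin\partition{a}{N-a}$ versus $a>N$) and the two ways of failing the box condition ($\alpha$ too tall versus $\alpha$ too wide) must be treated by different mechanisms. The foam-theoretic input is only the neck-cutting and sphere relations of \fullref{lemma:neck-cut}, and the combinatorial input is limited to the standard vanishing and monotonicity properties of Schur polynomials recalled in \fullref{subsec:sym-poly}.
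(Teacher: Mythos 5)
Your reduction to closed foams via the canopolis ideal is consistent with how $\foam{N}$ is defined, but the paper does not argue by neck-cutting at all: it records this lemma as an immediate consequence of the universal construction, namely that decorations on an $a$-labeled facet act through $\mathcal{F}(\bigcirc_a)\cong H^*_{\mathrm{GL}_N}(\grass_a)\cong\sym(\A|\Eq)/\langle \fullsym{N-a+i}(\A-\Eq)\mid i>0\rangle$. Under the hypotheses the decoration dies there: $\schur{\alpha}$ vanishes identically in an $a$-letter alphabet when $\alpha$ has more than $a$ rows; it lies in the defining ideal by Jacobi--Trudi when $\alpha_1>N-a$ (with the decoration read via $\A-\Eq$, as in the neck-cutting conventions); and for $a>N$ the whole ring is zero, equivalently an $a$-facet admits no Robert--Wagner coloring by an $a$-element subset of $\Eq$. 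Since each generator $\fullsym{N-a+i}(\A-\Eq)$ evaluates to zero at every coloring of every closure, such foams lie in $\ker(\mathcal{F})$ and hence vanish in $\foam{N}$ by the canopolis-ideal construction.

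The genuine gap in your argument is the step ``the sphere relation forces each summand to vanish unless some $\gamma$ equals $\Ybox{a,N-a}$''. A decorated sphere is itself a closed foam with a decorated $a$-facet, so the vanishing of spheres carrying $\schur{\gamma}$ with $\gamma\notin\partition{a}{N-a}$ is exactly the closed instance of the statement you are trying to prove; it cannot be imported from \fullref{lemma:neck-cut}, whose ``$0$ otherwise'' clause is only available in the needed generality for $\gamma\in\partition{a}{N-a}$ in the equivariant theory. For decorations in the facet alphabet the extended reading is in fact false: for $N=2$, $a=1$ the sphere decorated by $\schur{(2)}(\A)$ evaluates (up to sign) to $\elsym{1}(\Eq)\neq 0$, the familiar equivariant two-dotted sphere, although $(2)\notin\partition{1}{1}$. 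So your Littlewood--Richardson bookkeeping (correct as combinatorics: every $\gamma$ with $\LR{\alpha}{\beta}{\gamma}\neq 0$ contains $\alpha$) rests on an input that is either circular or wrong, and the case $\alpha_1>N-a$ must instead be handled by the ideal-membership argument above, after which the neck-cut and the LR expansion are unnecessary. Your tall case is fine, and your $a>N$ case is true in substance but is more directly justified by the emptiness of $\grass_a$ (no admissible colorings) than by the non-existence of the box partition.
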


\begin{proposition}\label{proposition:QRholds}
The defining relations \cite[(3.8) to (3.20)]{QR} of the foam 
$2$-category considered 
by Queffelec--Rose hold in $\Foam{N}$. 
\end{proposition}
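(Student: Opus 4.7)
The plan is to verify the relations (3.8)--(3.20) of Queffelec--Rose one by one. Since $\Foam{N}$ is defined as a canopolis quotient of $\freefoameq$ by the ideal $\mathcal{I}(V,W)$ determined by the closed foam evaluation $\eval$, two parallel foams agree in $\Foam{N}$ if and only if, for every closure, the resulting closed foams have equal evaluations under $\eval$. Thus each relation reduces in principle to a direct computation, but in practice I would avoid invoking the combinatorial evaluation \emph{formula} of Robert--Wagner and instead bootstrap from the already-established local relations in \fullref{subsec:equi-rel} (\fullref{lemma:neck-cut} and \fullref{lem:neckcutsphere}) together with the Frobenius/Schur-polynomial calculus for decorations on facets.

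Concretely, I would partition the QR relations into the following groups and handle each in turn. First, the sphere, theta-foam, and bubble-evaluation relations are immediate: for spheres they are exactly the sphere relations of \fullref{lemma:neck-cut} (absorbing the $(-1)^{\binom{a}{2}}$ sign into the QR normalization); bubbles follow by first applying neck-cutting across the equator and then the sphere relations, with the resulting sum over $\alpha\in\partition{a}{N-a}$ collapsing to a single Schur term by the orthogonality formula \eqref{eq:trace-sign} of \fullref{example:unknot}. Second, the dot-migration relations along a singular seam are handled by translating them into the Sylvester identity of \fullref{example:digon}: decorations on the three adjacent facets of a seam correspond to polynomials in $\sym(\A|\B|\A\cup\B)$, and the QR migration rules are precisely the statement that $\zeta$ is $\sym(\A\cup \B)$-linear. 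Third, the digon removal / zip-unzip relations are derived by performing a neck-cut on the tube of the digon, applying the sphere relation to eliminate one component, and reading off the surviving sum of decorations.

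The remaining, and genuinely three-dimensional, relations are the \emph{square switch}, the \emph{associativity} (or \emph{matryoshka}/MOY square) relations, and those involving singular vertices. For these I would use the bending trick \eqref{eq:bending-trick} to close both sides to endomorphisms of $\emptyset$, and then show equality by pairing with a dual basis of $\Hom_{\Foam{N}}(V,W)$ coming from the MOY decomposition of $W\cup_\phi\overline{V}$ (as summarized in \fullref{rem:gradhom}). Equivalently, both sides of these relations can be reduced to normal forms obtained by repeated neck-cutting along all non-trivial tubes; since such normal forms are $\sym(\Eq)$-linear combinations of decorated standard basis foams, the coefficients are then determined by evaluating a bounded number of closed theta-foams, which we have already evaluated.

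The main obstacle is not any individual identity but the bookkeeping of signs and normalizations. Robert--Wagner's evaluation introduces signs such as the $(-1)^{\binom{a}{2}}$ appearing in \eqref{eq:trace-sign} and \eqref{eq:neckcut}, while the relations of Queffelec--Rose are written in a sign-free normalization; one must therefore fix an identification between their generating foams and ours (e.g.\ rescaling cups, caps, and singular-vertex foams by appropriate signs) and verify that all QR relations hold \emph{with these signs absorbed consistently}. In particular, for the singular vertex relations I would compute both orientations of the foam in \eqref{eq:foamgen3} by closing against a standard basis of digon foams and checking that the resulting matrix agrees with Queffelec--Rose up to the global sign rescaling. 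Once the rescaling is fixed by one relation (say, the simplest digon removal), all other relations follow without further sign choices, so the verification is coherent and finite.
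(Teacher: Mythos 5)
There is a genuine gap, and it sits exactly where you decided to avoid the Robert--Wagner evaluation formula. The paper's own proof of this proposition is a one-line citation to \cite[Proof of Proposition 4.2]{RoW}, where the relations \cite[(3.8)--(3.20)]{QR} are verified by directly computing closed-foam evaluations with the explicit combinatorial formula; that formula is precisely the tool you refuse to use. Your substitute is to close both sides of the seam and singular-vertex relations (MP, square switch, digon/zip--unzip, blister) and reduce the closures to ``normal forms'' by ``repeated neck-cutting along all non-trivial tubes'', ending in theta foams. But the neck-cutting and sphere relations of \fullref{lemma:neck-cut} only apply to tubes and spheres lying inside a single $a$-labeled facet; they cannot reduce a closed foam containing singular seams or singular vertices to decorated spheres and theta foams. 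The manipulations you would actually need for that reduction --- detaching facets as in \eqref{eq:zipoff}, blister removal \eqref{eq:blister1}, decoration migration \eqref{eq:dot-mig1}, and the MP moves \eqref{eq:mp1} --- are themselves among the relations \cite[(3.8)--(3.20)]{QR} being proven, so the bootstrap is circular. Likewise, the freeness and MOY-rank statements of \fullref{rem:gradhom} that you invoke to produce dual bases are themselves consequences of the Robert--Wagner evaluation machinery; they tell you how many closures you must pair against, but they do not compute any of the resulting closed evaluations, which is the actual content of the verification.

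A second, smaller problem is your proposed handling of signs. The proposition asserts that the Queffelec--Rose relations hold in $\Foam{N}$ for the foams as given; rescaling cups, caps, or singular-vertex generators to ``absorb'' signs would change the statement and would propagate into every later use of these relations (e.g. \fullref{example:QRholds} and the Reidemeister computations), so it is not an admissible fix. In fact no rescaling is needed: the Robert--Wagner evaluation is constructed to be backward compatible with \cite{QR}, and the signs such as $(-1)^{\binom{a}{2}}$ only enter the closed evaluations \eqref{eq:trace-sign}, \eqref{eq:neckcut}, not the local relations --- but establishing this compatibility is exactly the computation carried out in \cite[Proposition 4.2]{RoW}. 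So either cite that verification, as the paper does, or commit to using the closed evaluation formula and actually compute the closures of each relation; the outline as written does neither.
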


\begin{proof} 
See \cite[Proof of Proposition 4.2]{RoW}.
\end{proof}

\begin{example}\label{example:QRholds}
We give three examples of relations from 
\fullref{proposition:QRholds} that we use later:
\begin{equation}\label{eq:zipoff}
\xy
(0,0)*{\includegraphics[scale=1.15]{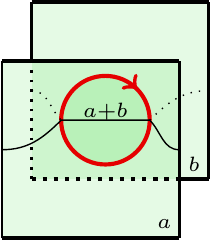}};
\endxy
\;\;
= \sum_{\alpha\in \partition{a}{b}} (-1)^{|\complementtrans{\alpha}|}
\;\;
\xy
(0,0)*{\includegraphics[scale=1.15]{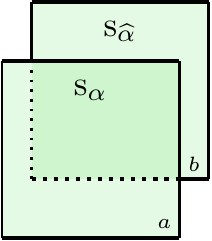}};
\endxy
\end{equation}
This relation is a special case of \cite[(3.14)]{QR} 
in which two facets carry the label zero. 
This means the corresponding
facets are to be erased and their boundary 
seams to be smoothed out.

The second example illustrates the family of Matveev--Piergallini (MP) relations \cite[(3.8)]{QR}:
\begin{equation}\label{eq:mp1}
\xy
(0,0)*{\includegraphics[scale=1.15]{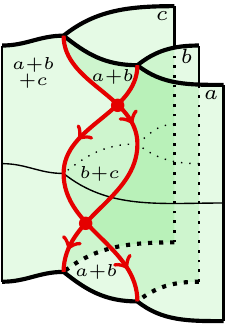}};
\endxy
\;\; = \;\;
\xy
(0,0)*{\includegraphics[scale=1.15]{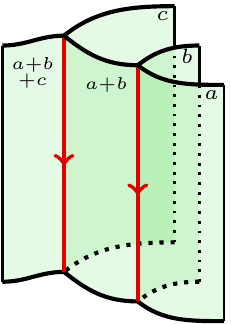}};
\endxy
\end{equation}
Finally, the \textit{decoration migration relations} \cite[(3.9)]{QR}, which involve the Littlewood 
Richardson coefficients:
\begin{equation}\label{eq:dot-mig1}
\xy
(0,0)*{\includegraphics[scale=1.15]{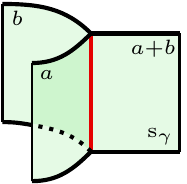}};
\endxy
\;
=
\;
\sum_{\alpha,\beta}
\LR{\alpha}{\beta}{\gamma}
\;
\xy
(0,0)*{\includegraphics[scale=1.15]{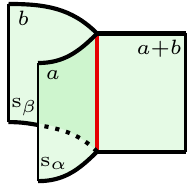}};
\endxy
\end{equation}
If $\A$, $\B$ and $\X$ are the alphabets on the facets of label $a$, $b$ and $a+b$ respectively, then this relation identifies symmetric polynomials in $\A \cup \B$ and $\X$. 
\end{example}

The Sylvester operator from \fullref{example:digon} allows a 
compact description of the blister removal relations \cite[(3.10)]{QR}:

\begin{examplen}\label{example:blister1}
Let $p\in \sym(\A|\Eq)$ and 
$q\in \sym(\B|\Eq)$ be decorations on the front (rear) and rear (front)
facets of the left (right) blister below. Then we have the following 
relations in $\Foam{N}$:
\begin{equation}\label{eq:blister1}
\xy
(0,0)*{\includegraphics[scale=1.15]{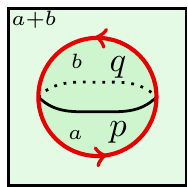}};
\endxy
\;=\;
\xy
(0,0)*{\includegraphics[scale=1.15]{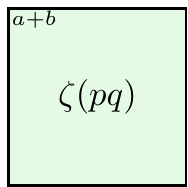}};
\endxy
\;=(-1)^{a b}
\xy
(0,0)*{\includegraphics[scale=1.15]{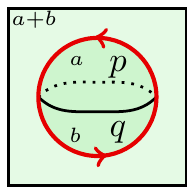}};
\endxy
\hspace{3.0cm}
\raisebox{-.8cm}{\makeqedtri}
\hspace{-3.0cm}
\end{equation}
\end{examplen}

\begin{corollary}\label{corollary:blister}
Consider the theta foam obtained from the 
left-hand side of \eqref{eq:blister1} by 
quotienting the boundary of the square to a 
point. Suppose that this foam carries 
decorations $p\in \sym(\A|\Eq)$, 
$q\in \sym(\B|\Eq)$ and $r\in \sym(\X|\Eq)$ 
on the facets of label $a$, $b$ 
and $a+b$. Then this theta foam evaluates to the 
scalar $\trace_{a+b}(r \zeta(p q))\in\sym(\Eq)$ in $\Foam{N}$ 
if we identify symmetric polynomials in $\A\cup \B$ 
and $\X$ via \eqref{eq:dot-mig1}.
\comm{Suppose a theta foam carries 
decorations $p\in \sym(\A|\Eq)$, 
$q\in \sym(\B|\Eq)$ and $r\in \sym(\X|\Eq)$ 
on the facets of label $a$, $b$ 
and $a+b$. Then it evaluates to the 
scalar $\trace_{a+b}(r \phi(\zeta(p q)))\in\sym(\Eq)$ in $\Foam{N}$ }
\end{corollary}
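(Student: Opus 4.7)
The plan is to reduce the evaluation of the decorated theta foam to the evaluation of a decorated sphere, using only two of the relations already established in the excerpt: the blister removal relation \eqref{eq:blister1} and the sphere relation \eqref{eq:neckcut}. Geometrically, the theta foam in question can be viewed as the left-hand side of \eqref{eq:blister1} sandwiched between a cup and a cap of label $a+b$: collapsing the boundary of the surrounding square to a point, as prescribed in the statement, closes up the $(a+b)$-facet into a sphere with the decorated blister attached to it along the common seam circle.

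First, I would apply the first equality of \eqref{eq:blister1} (the one that does \emph{not} carry the $(-1)^{ab}$ prefactor) in order to replace the blister — i.e.\ the two facets of labels $a$ and $b$ carrying decorations $p$ and $q$ — by a single $(a+b)$-facet carrying the decoration $\zeta(pq) \in \sym(\X|\Eq)$, where the identification of $\sym(\A\cup \B|\Eq)$ with $\sym(\X|\Eq)$ is the one supplied by the decoration migration relation \eqref{eq:dot-mig1}. After this step the theta foam has been transformed into a single sphere of label $a+b$ carrying the combined decoration $r\,\zeta(pq) \in \sym(\X|\Eq)$, where the two decorations on the common facet multiply as symmetric polynomials in $\sym(\X|\Eq)$.

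Finally, I would apply the sphere relation \eqref{eq:neckcut} to this decorated sphere. Expanding $r\,\zeta(pq)$ in the Schur basis $\{\schur{\gamma}(\X)\mid \gamma\in\partition{a+b}{N-a-b}\}$ over $\sym(\Eq)$, only the coefficient of $\schur{\Ybox{a+b,N-a-b}}(\X)$ survives, weighted by the sign $(-1)^{\binom{a+b}{2}}$. By the $\beta=\emptyset$ specialization of \eqref{eq:trace-sign}, this coefficient-picking is precisely what the map $\trace_{a+b}\colon H^*_{\mathrm{GL}_N}(\grass_{a+b}) \to \sym(\Eq)$ performs, so the theta foam evaluates to $\trace_{a+b}(r\,\zeta(pq))$ as claimed. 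The only real content beyond unpacking definitions is the sign bookkeeping: it is essential to invoke the first (sign-free) equality of \eqref{eq:blister1} rather than the third one, so that the unique sign $(-1)^{\binom{a+b}{2}}$ produced by the sphere relation is absorbed cleanly into the definition of $\trace_{a+b}$ and no extraneous factor $(-1)^{ab}$ appears in the final answer.
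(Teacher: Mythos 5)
Your argument is correct and is precisely the (implicit) argument the paper intends: the corollary is stated without a separate proof because it follows directly from the sign-free equality in \eqref{eq:blister1}, which replaces the decorated blister by a single $(a{+}b)$-facet decorated by $\zeta(pq)$, and the sphere evaluation \eqref{eq:neckcut}, whose sign $(-1)^{\binom{a+b}{2}}$ is exactly the one built into $\trace_{a+b}$ via \eqref{eq:trace-sign}. Your sign bookkeeping, including the remark that the $(-1)^{ab}$ from the third expression in \eqref{eq:blister1} must not enter, matches the paper's conventions (compare \fullref{remark:weird-signs}).
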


\begin{remark}\label{remark:weird-signs}
\fullref{corollary:blister} and \eqref{eq:zipoff} explain 
why the sign 
in \eqref{eq:trace-sign} and \eqref{eq:neckcut} 
is necessary. Using the notation from \fullref{corollary:blister}, 
we can evaluate a theta foam in two different ways:
\[
\sum_{\alpha \in \partition{a}{b}}(-1)^{|\complementtrans{\alpha}|}\;
\xy
(0,0)*{\includegraphics[scale=1.15]{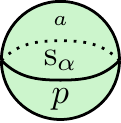}};
\endxy
\quad
\xy
(0,0)*{\includegraphics[scale=1.15]{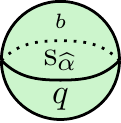}};
\endxy
\;
\stackrel{\eqref{eq:zipoff}}{=}
\;
\xy
(0,0)*{\includegraphics[scale=1.15]{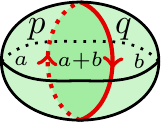}};
\endxy
\;
\stackrel{\eqref{eq:blister1}}{=}
(-1)^{ab}
\;
\xy
(0,0)*{\includegraphics[scale=1.15]{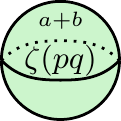}};
\endxy
\]
Thus, taking $p=\emptyset$ 
and $q=\Ybox{b,N-b}$ gives
\[
\trace_a(\Ybox{a,N-a})\trace_b(\Ybox{b,N-b})=(-1)^{ab}\trace_{a+b}(\Ybox{a+b,N-a-b}).
\]
Hence, fixing $\trace_1(\Ybox{1,N-1})=1$ determines the rest to be 
as in \eqref{eq:neckcut}.
\end{remark}

\begin{lemma}\label{lemma:scalar-the-second}
The following relations hold in $\Foam{N}$ for $a\geq b$:
\begin{gather}\label{eq:idem-disc}
\xy
(0,0)*{\includegraphics[scale=1.15]{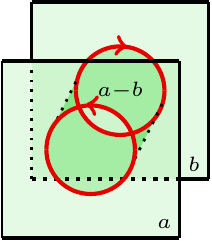}};
\endxy
\;
= (-1)^{b(a-b)}\;\;
\xy
(0,0)*{\includegraphics[scale=1.15]{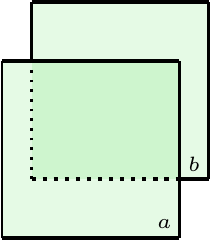}};
\endxy
\end{gather}
Here the middle annulus carries the label $a-b$. 
Similarly in case $b\geq a$, but with swapped orientation on the seams.
\end{lemma}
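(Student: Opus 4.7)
The plan is to reduce the claimed equality to the identification of a single scalar. By \fullref{rem:gradhom}, the endomorphism space of the $a$-labeled trivial tangle is one-dimensional in degree zero, spanned by the identity foam, and both foams appearing in \eqref{eq:idem-disc} clearly have degree zero on the $a$-labeled strand (the annular $(a-b)$-facet contributes no net degree). Hence it suffices to compute the scalar by which each side acts after capping it off with cups and caps.

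My first step is to apply the neck-cutting relation of \fullref{lemma:neck-cut} to the $b$-labeled disc facet on the left-hand side. This introduces the sign $(-1)^{\binom{b}{2}}$ and produces a sum over $\alpha\in\partition{b}{N-b}$ of foams in which the inner disc has been separated from the remainder of the configuration by a $b$-labeled sphere decorated with $\schur{\alpha}$ (resp.\ $\schur{\complementnorm{\alpha}}$ on the newly created cup). What remains after this cut is a genuine blister configuration between the $b$- and $(a-b)$-facets that meet along a circular seam of the original annulus.

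Next, I would evaluate this blister using \fullref{corollary:blister} together with the Sylvester operator property of \fullref{example:digon}: the blister with decoration $\schur{\alpha}$ on one facet and no decoration on the other is sent by $\zeta$ to a nonzero multiple of the Schur polynomial $\schur{\complementtrans{\alpha}}$ on the merged $(a-b)$-to-$a$ junction, picking up the sign $(-1)^{|\complementtrans{\alpha}|}$ from \eqref{eq:blister1} and \eqref{eq:zipoff}. Absorbing the resulting decoration onto the $a$-facet via the decoration migration relation \eqref{eq:dot-mig1} and applying neck-cutting in reverse, which contributes $(-1)^{\binom{a}{2}}$, produces the identity foam on the $a$-facet weighted by a sum of scalars.

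The main obstacle is the sign bookkeeping: the total sign is $(-1)^{\binom{b}{2}+\binom{a-b}{2}-\binom{a}{2}}$ multiplied by the blister sign $(-1)^{ab}$ (or by the Sylvester-operator signs depending on the order of reductions), and verifying that this collapses to $(-1)^{b(a-b)}$ uses the identity $\binom{a}{2}=\binom{b}{2}+\binom{a-b}{2}+b(a-b)$. A parallel calculation, or equivalently a Matveev--Piergallini move \eqref{eq:mp1} that exposes the blister directly before neck-cutting, confirms the coefficient. The orthogonality of Schur polynomials under the Frobenius pairing of \fullref{example:unknot} ensures that only the diagonal terms survive, so the double sum telescopes cleanly to a single copy of the identity foam, yielding the claimed relation.
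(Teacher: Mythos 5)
Your opening reduction is fine in spirit (by \fullref{rem:gradhom} the degree-zero endomorphisms of a vertex-free web are spanned by the identity, so the lemma is really the computation of one scalar), but note that the whole content of the statement is the precise value $(-1)^{b(a-b)}$, and that is exactly what your argument does not pin down. Also, literally ``capping off with cups and caps'' would not detect the scalar, since an undecorated $a$-sphere evaluates to zero by \eqref{eq:neckcut}; one would have to use decorated closures, and in any case you abandon this and switch to local manipulations.

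The local computation you sketch has a genuine gap. First, the moves do not match the geometry of the foam: the relation is proved in the paper by detaching the $(a-b)$-annulus from the rear facet via the zip-off relation \eqref{eq:zipoff} (a finite, $N$-independent sum over $\partition{a-b}{b}$) and then removing the resulting blister in the front facet via \eqref{eq:blister1}, so that a single term survives with coefficient $(-1)^{b(a-b)}$. Your route instead neck-cuts the $b$-facet (an $N$-dependent sum over $\partition{b}{N-b}$ with sign $(-1)^{\binom{b}{2}}$) and later invokes a ``reverse neck-cut'' on the $a$-facet contributing $(-1)^{\binom{a}{2}}$; but the right-hand side of \eqref{eq:idem-disc} is just (a multiple of) the identity foam, there is no cup--cap pair on the $a$-facet created by your earlier steps that could be re-fused, so this step has no counterpart in the actual foam, and the $N$-dependence you introduce is never shown to cancel. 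Second, the sign bookkeeping, which is the entire point, is left open (``or by the Sylvester-operator signs depending on the order of reductions'') and is inconsistent as stated: $(-1)^{\binom{b}{2}+\binom{a-b}{2}-\binom{a}{2}}$ already equals $(-1)^{b(a-b)}$, so multiplying by your blister sign $(-1)^{ab}$ yields $(-1)^{b(a-b)+ab}=(-1)^{b}$, not $(-1)^{b(a-b)}$ (e.g. $a=3$, $b=1$ gives $-1$ instead of $+1$); moreover the blister occurring here is between facets of labels $b$ and $a-b$ on an $a$-labeled facet, so the correct specialization of \eqref{eq:blister1} would contribute $(-1)^{b(a-b)}$, not $(-1)^{ab}$. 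In short, the toolbox you name (detaching, blister/Sylvester evaluation, decoration migration) is the right one, but the specific sequence of relations and the sign count are not carried out and, as written, do not produce the claimed coefficient; the two-step argument via \eqref{eq:zipoff} and \eqref{eq:blister1} is both shorter and avoids all $N$-dependent sums.
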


\begin{proof} 
We detach the middle facet from the 
rear facet via \eqref{eq:zipoff} at 
the expense of a decoration 
$\sum_{\alpha\in \partition{a-b}{b}} (-1)^{|\complementtrans{\alpha}|}\schur{\alpha}(\X)\schur{\complementtrans{\alpha}}(\B)$, 
where $\X$ and $\B$ are the alphabets on the middle and 
rear facets. Removing the blister 
in the front facet gives zero, unless $\alpha=\Ybox{a-b,b}$, 
see \eqref{eq:blister1}.
Hence, only the coefficient 
$(-1)^{|\complementtrans{\emptyset}|}=(-1)^{b(a-b)}$ survives.
\end{proof}

\subsection{Specializations and their relations}\label{subsec:special}
From now on, we let $\alphS=\{\roots_1,\dots,\roots_N\}$ be a set of $N$ pairwise different complex numbers and consider the specialization homomorphism of $\C$-algebras 
\begin{gather*}
\spec\colon\sym(\Eq)\to \C,\quad p(\Eq)\mapsto p(\alphS).
\end{gather*}
Define the canopolis $\FoamS{N}$ as the $\C$-linear 
canopolis obtained from $\Foam{N}$ by specializing 
variables via $\spec$. The objects in $\FoamS{N}$ still 
carry $q$-degree shifts, but on the level of morphism spaces, 
the grading is demoted to a filtration. In the following, we 
abuse notation by writing $\spec$ for the induced $\C$-linear 
canopolis morphism from $\Foam{N}$ to $\FoamS{N}$, which 
respects the filtrations. It is clear that $\FoamS{N}$ 
satisfies the $\alphS$-specialized versions 
of the foam relations already listed for $\Foam{N}$. 

\begin{example}\label{example:Blanchet}
In the case $N=2$ and $\alphS=\{1,-1\}$ the foams in 
$\FoamS{N}$ have been used by Blanchet \cite[Section 4]{Bla}. 
They give rise to a deformed link homology theory that is analogous to Lee's deformation \cite{Lee} of Khovanov homology.
\end{example}

\begin{remark}\label{remark:spez-to-zero}
Specializing all variables in $\Eq$ not to distinct numbers, 
but to zero instead, one recovers a canopolis of webs and foams 
that can be used for the definition of the (non-equivariant) 
colored Khovanov--Rozansky link homologies. The canopolis 
$\FoamS{N}$ can be seen as its \textit{generic} deformation. 
Deformations obtained as other specializations of the variables in $\Eq$ 
have been studied in \cite{RW}.
\end{remark}

\begin{lemma}\label{lemma:idem}
The algebra of 
decorations on an $a$-labeled foam facet is the 
direct sum of one-dimensional summands indexed 
by $a$-element subsets $\wcolor{A}\subset\Sigma$. 
We denote the corresponding idempotents by $\idem{A}$ and display them as
\[
\xy
(0,0)*{\includegraphics[scale=1.15]{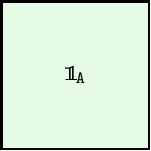}};
\endxy
\]
Additionally, decorations $p\in \sym(\A)$ satisfy
\begin{equation}\label{eq:action-schur}
p(\A)\cdot\idem{A}=p(\wcolor{A})\cdot\idem{A},
\end{equation}
i.e. they act on the $\wcolor{A}$-summand via evaluation at $\wcolor{A}$.
\end{lemma}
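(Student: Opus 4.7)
The plan is to identify the algebra of decorations on an $a$-labeled facet with a concrete commutative $\C$-algebra and then apply the Chinese Remainder Theorem to obtain the claimed summand decomposition.

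First, I would identify the decoration algebra. By the description of $\mathcal{F}(\bigcirc_a)$ in \fullref{sec:univconstr}, the algebra of decorations on an $a$-labeled facet in $\Foam{N}$ is isomorphic to $H^*_{\mathrm{GL}_N}(\grass_a)\cong \sym(\A|\Eq)/\langle \fullsym{N-a+i}(\A-\Eq) \mid i>0\rangle$, and this is the algebra acting on every morphism space containing such a facet via the decoration absorption rule of \fullref{subsection:canopolis}. Applying the specialization morphism $\spec$ of \fullref{subsec:special}, the corresponding algebra in $\FoamS{N}$ becomes the finite-dimensional $\C$-algebra
\[
R_a := \sym(\A)/\langle \fullsym{N-a+i}(\A-\alphS) \mid i>0\rangle.
\]

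Second, I would show that $R_a$ is semisimple with $\binom{N}{a}$ one-dimensional summands indexed by $a$-subsets of $\alphS$. Using \fullref{definition:schurdiff}, the vanishing $\fullsym{N-a+i}(\A-\alphS)=0$ for all $i>0$ is equivalent to
\[
\sum_{k\geq 0} \fullsym{k}(\A-\alphS) x^k \;=\; \frac{\prod_{\lambda\in\alphS}(1-\lambda x)}{\prod_{A\in\A}(1-Ax)}
\]
being a polynomial of degree at most $N-a$ in $R_a[x]$, i.e. $\prod_{A\in\A}(1-Ax)$ divides $\prod_{\lambda\in\alphS}(1-\lambda x)$ in $R_a[x]$. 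Since the $N$ numbers in $\alphS$ are pairwise distinct, at every closed point of $\mathrm{Spec}(R_a)$ the multiset $\{A\mid A\in\A\}$ must therefore equal an $a$-element subset $\wcolor{A}\subset\alphS$. Conversely, for any such $\wcolor{A}$ the assignment $\elsym{j}(\A)\mapsto \elsym{j}(\wcolor{A})$ defines a $\C$-algebra homomorphism $R_a\to\C$, producing $\binom{N}{a}$ distinct maximal ideals. Since $\dim_\C R_a = \binom{N}{a}$ (the graded $\sym(\Eq)$-rank of $\mathcal{F}(\bigcirc_a)$ from \fullref{example:unknot} is preserved under $\spec$), the Chinese Remainder Theorem yields $R_a\cong \bigoplus_{\wcolor{A}}\C$.

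Third, I would define $\idem{A}$ as the primitive idempotent projecting $R_a$ onto the $\wcolor{A}$-summand of this decomposition. Since the projection to this summand sends $p(\A)\in\sym(\A)$ to $p(\wcolor{A})\in\C$ by construction, the identity $p(\A)\cdot\idem{A}=p(\wcolor{A})\cdot\idem{A}$ of \eqref{eq:action-schur} follows immediately.

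The main obstacle I anticipate is not algebraic but notational: making precise that the decoration algebra acting on an arbitrary facet in the canopolis $\FoamS{N}$ really coincides with $R_a$ acting on every morphism space containing such a facet. This amounts to reducing, via the bending trick of \fullref{rem:gradhom} and the planar algebra operations of \fullref{subsection:canopolis}, to the case of the closed $a$-labeled circle, where the identification with $H^*_{\mathrm{GL}_N}(\grass_a)$ from \fullref{sec:univconstr} applies. Once this is justified, the CRT argument and the matching dimension count render the decomposition and the action formula essentially formal.
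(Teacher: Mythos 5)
Your proof is correct, and it is essentially the argument that the paper outsources: the paper's own ``proof'' of \fullref{lemma:idem} is just a citation to \cite[Lemma 4.2]{RW}, whose content is precisely the character/Chinese-Remainder argument you give, namely that $R_a=\sym(\A)/\langle \fullsym{N-a+i}(\A-\alphS)\mid i>0\rangle$ has $\C$-dimension $\binom{N}{a}$, carries the $\binom{N}{a}$ pairwise distinct evaluation characters at $a$-element subsets $\wcolor{A}\subset\alphS$, and therefore splits into one-dimensional summands on which decorations act by evaluation. Two small points of polish. First, in your classification of closed points the divisibility statement alone is not quite enough when a variable of $\A$ (or an element of $\alphS$) is sent to $0$, since the factor $1-Ax$ then degenerates to $1$; it is the degree bound $\deg\le N-a$ on the quotient series, combined with distinctness of the elements of $\alphS$, that excludes repeated or spurious zero values. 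In fact you can bypass this direction entirely: the explicitly constructed characters have pairwise comaximal kernels, so CRT plus $\dim_\C R_a=\binom{N}{a}$ already forces $R_a\cong\C^{\binom{N}{a}}$. Second, for your ``notational obstacle'' the bending trick is not really the right tool, because knowing the relation on the $a$-labeled circle does not by itself make it local; the clean justification that decorations on an arbitrary $a$-labeled facet act through $R_a$ is that $\fullsym{N-a+i}(\A-\Eq)=0$, $i>0$, is a local relation in $\Foam{N}$: in the Robert--Wagner evaluation every admissible coloring assigns an $a$-element subset of $\Eq$ to that facet and annihilates this decoration, so every closure of a foam carrying it evaluates to zero, and hence the decorated foam lies in the canopolis ideal; specializing via $\spec$ then gives the statement in $\FoamS{N}$, after which your algebraic argument applies verbatim.
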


Here we use that we work over $\C$. However, $\Q$ would suffice as ground ring if $\alphS\subset \Q$.
In the following we use the idempotents $\idem{A}$ in 
\fullref{lemma:idem} to label the facets of foams in $\FoamS{N}$.

\begin{proof}
See \cite[Lemma 4.2]{RW} and also \cite[Proof of Theorem 3.11]{RW}.
\end{proof}

\begin{lemman}(\cite[Lemma 4.2]{RW}.)\label{lemma:admissibility} 
Let $\wcolor{A}$, $\wcolor{B}$ and $\wcolor{X}$ be
subsets of $\alphS$ with $|\wcolor{A}|=a$, $|\wcolor{B}|=a$ and 
$|\wcolor{X}|=a+b$ respectively.
Then the following relation holds in $\FoamS{N}$:
\[
\xy
(0,0)*{\includegraphics[scale=1.15]{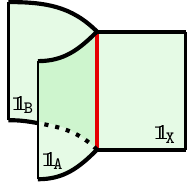}};
\endxy
=0,\quad  \text{unless } \wcolor{A} \cup \wcolor{B} = \wcolor{X}.
\hspace{4.4cm}
\raisebox{-.7cm}{\qedmake}
\hspace{-4.4cm}
\] 
\end{lemman}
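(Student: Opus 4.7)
The plan is to exploit the decoration migration relation \eqref{eq:dot-mig1} together with the idempotent evaluation rule \eqref{eq:action-schur} in order to reduce the question to a separation of multisets by symmetric polynomials. Let $F$ denote the Y-foam in the statement. For any symmetric polynomial $p$ in an $(a+b)$-element alphabet, decoration migration allows me to replace the decoration $p(\X)$ on the top facet by the equal decoration $p(\A\cup\B)\in\sym(\A|\B)$ split across the two bottom facets; the identity $\schur{\gamma}(\A\cup\B)=\sum_{\alpha,\beta}\LR{\alpha}{\beta}{\gamma}\schur{\alpha}(\A)\schur{\beta}(\B)$ is the Littlewood--Richardson rule, and by $\sym(\Eq)$-linearity this extends \eqref{eq:dot-mig1} from Schur polynomials to every element of $\sym(\X)$.

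Next I would apply \eqref{eq:action-schur} on both sides. On the top facet, $p(\X)\cdot\idem{X}=p(\wcolor{X})\cdot\idem{X}$. Expanding $p(\A\cup\B)$ as an $\sym(\Eq)$-linear combination of products $q(\A)r(\B)$ and applying \eqref{eq:action-schur} to $\idem{A}$ and $\idem{B}$ separately yields $p(\A\cup\B)\cdot\idem{A}\idem{B}=p(\wcolor{A}\sqcup\wcolor{B})\cdot\idem{A}\idem{B}$, where $\sqcup$ denotes multiset union (so repeated elements of $\wcolor{A}$ and $\wcolor{B}$ are counted twice). Equating the two decorations on $F$ then gives, for every such $p$,
\[
\bigl(p(\wcolor{X})-p(\wcolor{A}\sqcup\wcolor{B})\bigr)\cdot F = 0.
\]

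To conclude, I would argue that $\wcolor{A}\cup\wcolor{B}\neq\wcolor{X}$ forces the size-$(a+b)$ multisets $\wcolor{X}$ and $\wcolor{A}\sqcup\wcolor{B}$ to be distinct: either $\wcolor{A}\cap\wcolor{B}\neq\emptyset$, in which case $\wcolor{A}\sqcup\wcolor{B}$ has a repeated element that cannot appear in the honest set $\wcolor{X}\subset\alphS$, or $\wcolor{A}$ and $\wcolor{B}$ are disjoint but their union differs from $\wcolor{X}$. The elementary symmetric polynomials $\elsym{1},\dots,\elsym{a+b}$, being the coefficients of $\prod_s(x-s)$ taken over a multiset of complex numbers, separate such multisets, so some choice of $p$ makes $p(\wcolor{X})-p(\wcolor{A}\sqcup\wcolor{B})$ a nonzero complex number. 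Since we work over $\C$, this scalar is invertible and $F=0$ follows.

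The hard part, such as it is, lies in the bookkeeping of the middle step: one has to check that a decoration symmetric only in $\A$ and $\B$ separately still collapses to a multiset evaluation when tested against $\idem{A}\idem{B}$, and that migration extends from Schur polynomials to all of $\sym(\X)$. Both facts are immediate from \fullref{lemma:idem} and from the Littlewood--Richardson rule respectively, so the proof should be short once this dictionary is in place.
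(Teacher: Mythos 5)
Your argument is correct. Note that the paper itself gives no proof of this lemma --- it is imported by citation from \cite[Lemma 4.2]{RW} --- so your write-up is a self-contained derivation from tools available in the paper, and it is essentially the standard one: migrate an arbitrary $p(\X)$ from the thick facet through the seam via \eqref{eq:dot-mig1} (extended from Schur polynomials to all of $\sym(\X)$ by linearity --- over $\C$ in $\FoamS{N}$, which is all you need), evaluate on both sides using \eqref{eq:action-schur} from \fullref{lemma:idem}, and conclude $\bigl(p(\wcolor{X})-p(\wcolor{A}\sqcup\wcolor{B})\bigr)\cdot F=0$; since the elementary symmetric polynomials separate multisets of complex numbers, a non-admissible coloring produces an invertible scalar annihilating $F$. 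Your case split is also complete: since $|\wcolor{X}|=a+b$, the condition $\wcolor{A}\cup\wcolor{B}=\wcolor{X}$ already forces $\wcolor{A}\cap\wcolor{B}=\emptyset$, and in the complementary case the multiset $\wcolor{A}\sqcup\wcolor{B}$ necessarily differs from the honest set $\wcolor{X}$, exactly as you argue. The only caveat worth recording is that the migration step is applied locally at the seam in the presence of the idempotent decorations, which is legitimate because decorations on a facet simply multiply; with that observed, the proof is complete and matches in spirit the argument in the cited reference.
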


\begin{lemma}\label{lemma:scalars}
Let $\A$, $\B$ be alphabets of size $a$ and $b$ respectively. Then we have: 
\[
r(\A,\B)=
{\textstyle\sum_{\alpha\in \partition{a}{b}}}\; (-1)^{|\complementtrans{\alpha}|} \schur{\alpha}(\A)\schur{\complementtrans{\alpha}}(\B) = 
{\textstyle\prod_{A\in \A, B\in \B}}\; (A-B).
\]
From this we get 
\[
r(\B,\A)=(-1)^{a b} r(\A,\B)
\quad\text{and}\quad
r(\A,\B)=0,\;\text{if }\A\cap\B \neq \emptyset.
\] 
If $\X$ is another alphabet, then
\[
r(\A,\B\cup \X)=r(\A,\B)r(\A,\X).
\]
If $\wcolor{A}\subset \alphS$, then we also have
\[
\scalar{A,\alphS\setminus A}=
{\textstyle\sum_{\alpha\in \partition{a}{N-a}}} \; \schur{\alpha}(\wcolor{A})\schur{\complementnorm{\alpha}}(\wcolor{A}-\alphS ) =
{\textstyle\sum_{\alpha\in \partition{a}{N-a}}}\; (-1)^{|\complementtrans{\alpha}|} \schur{\alpha}(\wcolor{A})\schur{\complementtrans{\alpha}}(\alphS \setminus \wcolor{A}),
\]
which is always non-zero.
\end{lemma}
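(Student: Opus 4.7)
The lemma packages five assertions; my plan is to derive them in order, with the main work going into the product formula $r(\A,\B)=\prod_{A,B}(A-B)$.

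I would first show that the alternating sum defining $r(\A,\B)$ equals $\schur{\Ybox{a,b}}(\A-\B)$. The key tool is the standard plethystic expansion $\schur{\lambda}(\A-\B)=\sum_{\mu\subseteq\lambda}(-1)^{|\lambda/\mu|}\schur{\mu}(\A)\schur{(\lambda/\mu)'}(\B)$, which follows from $\fullsym{k}(\A-\B)=\sum_i\fullsym{k-i}(\A)\elsym{i}(-\B)$ and the Jacobi-Trudi determinant. When $\lambda=\Ybox{a,b}$, the subpartitions $\mu\subseteq\lambda$ are exactly $\mu\in\partition{a}{b}$; the skew shape $\Ybox{a,b}/\mu$ is a $180^\circ$ rotation of $\complementnorm{\mu}$, hence has Schur function $\schur{\complementnorm{\mu}}$, area $ab-|\mu|=|\complementtrans{\mu}|$, and transposed Schur $\schur{\complementtrans{\mu}}$. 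The specialization of the expansion then yields $\schur{\Ybox{a,b}}(\A-\B)=\sum_{\mu}(-1)^{|\complementtrans{\mu}|}\schur{\mu}(\A)\schur{\complementtrans{\mu}}(\B)=r(\A,\B)$.

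It remains to prove $\schur{\Ybox{a,b}}(\A-\B)=\prod(A_i-B_j)$. Both sides are polynomials of total degree $ab$, symmetric in $\A$ and in $\B$ separately, and both vanish whenever $A_i=B_j$: for the product this is immediate; for the Schur polynomial, setting $A_1=B_1$ causes the generating function $\prod(1-Bx)/\prod(1-Ax)$ to lose the common factor $(1-A_1x)$, reducing $\schur{\Ybox{a,b}}(\A-\B)$ to $\schur{\Ybox{a,b}}(\A'-\B')$ with alphabets of sizes $a-1$ and $b-1$, which vanishes since the shape $\Ybox{a,b}$ does not fit into the corresponding hook (Berele--Regev vanishing). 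Hence $\schur{\Ybox{a,b}}(\A-\B)=c\cdot\prod(A_i-B_j)$ for some scalar $c$, which is pinned down by specializing all $B_j$ to zero: the left side becomes $\schur{\Ybox{a,b}}(\A)=(A_1\cdots A_a)^b$, the right side $c\cdot(A_1\cdots A_a)^b$, so $c=1$.

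The three subsidiary identities follow instantly from the product formula: reordering the $ab$ factors gives the $(-1)^{ab}$ sign; vanishing under $\A\cap\B\neq\emptyset$ is clear from a zero factor; and $r(\A,\B\cup\X)=r(\A,\B)r(\A,\X)$ is just splitting the product. For the final claim, the third displayed expression is $r(\wcolor{A},\alphS\setminus\wcolor{A})=\prod_{A'\in\wcolor{A},\,\sigma\in\alphS\setminus\wcolor{A}}(A'-\sigma)$ by the main identity. To match this with the second expression, I would apply the plethystic involution $\schur{\lambda}(X-Y)=(-1)^{|\lambda|}\schur{\lambda'}(Y-X)$ to $\lambda=\complementnorm{\alpha}$: using $(\complementnorm{\alpha})'=\complementtrans{\alpha}$ and $|\complementnorm{\alpha}|=|\complementtrans{\alpha}|$ gives $\schur{\complementnorm{\alpha}}(\wcolor{A}-\alphS)=(-1)^{|\complementtrans{\alpha}|}\schur{\complementtrans{\alpha}}(\alphS-\wcolor{A})$. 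Since $\wcolor{A}\subset\alphS$, the generating function for $\fullsym{k}(\alphS-\wcolor{A})$ simplifies to $1/\prod_{\sigma\in\alphS\setminus\wcolor{A}}(1-\sigma x)$, giving $\fullsym{k}(\alphS-\wcolor{A})=\fullsym{k}(\alphS\setminus\wcolor{A})$ and hence $\schur{\complementtrans{\alpha}}(\alphS-\wcolor{A})=\schur{\complementtrans{\alpha}}(\alphS\setminus\wcolor{A})$ via Jacobi-Trudi. Non-vanishing is then immediate: $\wcolor{A}$ and $\alphS\setminus\wcolor{A}$ are disjoint subsets of $\alphS$, which consists of pairwise distinct complex numbers, so no factor $(A'-\sigma)$ vanishes.

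The main hurdle is the product formula; the plethystic-expansion-plus-vanishing route above is my preferred path, but a more computational alternative proceeds via the dual Cauchy identity with the substitution $x_i\mapsto-1/A_i$, $y_j\mapsto B_j$, combined with the reciprocation formula for Schur polynomials at inverse variables.
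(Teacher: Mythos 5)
Your proposal is correct, and it reaches the same statements by a more self-contained route than the paper, whose proof is essentially a two-line citation: the first equation is quoted from Macdonald \cite[Section I.4, Example 5]{MD} (the finite dual Cauchy identity, applied with the second alphabet negated), and the final display is handled by simply quoting the duality $\schur{\beta}(\wcolor{A}-\alphS)=(-1)^{|\beta|}\schur{\transpose{\beta}}(\alphS\setminus\wcolor{A})$ with $\beta=\complementnorm{\alpha}$; the intermediate assertions and the non-vanishing are left implicit, exactly as consequences of the product formula. What you do differently is to re-derive the Cauchy-type identity: you identify $r(\A,\B)$ with $\schur{\Ybox{a,b}}(\A-\B)$ via the plethystic expansion (your rectangle combinatorics is right: $\Ybox{a,b}/\mu$ rotates to $\complementnorm{\mu}$, its conjugate gives $\complementtrans{\mu}$, and $ab-|\mu|=|\complementtrans{\mu}|$), and you then prove the factorization $\schur{\Ybox{a,b}}(\A-\B)=\prod_{A\in\A,B\in\B}(A-B)$ by the vanishing-plus-degree-plus-normalization argument; likewise you derive, rather than quote, the duality identity from $\schur{\lambda}(X-Y)=(-1)^{|\lambda|}\schur{\transpose{\lambda}}(Y-X)$ together with the cancellation $\fullsym{k}(\alphS-\wcolor{A})=\fullsym{k}(\alphS\setminus\wcolor{A})$ for $\wcolor{A}\subset\alphS$, which is exactly the identity the paper uses. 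The paper's approach buys brevity; yours buys independence from the references and makes the non-vanishing and the three subsidiary identities transparent from the explicit product. One small remark: you do not even need to invoke Berele--Regev, since the vanishing of $\schur{\Ybox{a,b}}$ for alphabets of sizes $a-1$ and $b-1$ already follows from your own expansion: for each $\mu\in\partition{a}{b}$ either $\mu$ has $a$ nonzero parts, killing $\schur{\mu}$ in $a-1$ variables, or $\mu_a=0$, in which case $\complementnorm{\mu}_1=b$, so $\complementtrans{\mu}$ has $b$ nonzero parts and $\schur{\complementtrans{\mu}}$ dies in $b-1$ variables.
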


\begin{proof} 
For the first 
equation, see e.g. \cite[Section I.4, Example 5]{MD}. For 
the second equation we use $\schur{\beta}(\wcolor{A}-\alphS ) = (-1)^{|\beta|} \schur{\transpose{\beta}}(\alphS \setminus \wcolor{A})$.
\end{proof}

We need to know how idempotent-colored 
foams behave with respect to the relations found 
above. 
We let $\scalar{A}=(-1)^{\binom{a}{2}}\scalar{A,\alphS\setminus A}$.

\begin{lemman}\label{lemma:scalar-the-first}
The following relations hold in $\FoamS{N}$:
\begin{gather}\label{eq:idem-bubble}
\xy
(0,0)*{\includegraphics[scale=1.15]{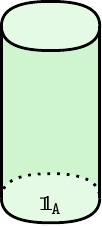}};
\endxy
\;=
\scalar{A}\; 
\xy
(0,0)*{\includegraphics[scale=1.15]{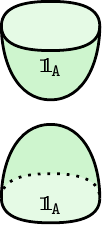}};
\endxy
\quad,\quad
\xy
(0,0)*{\includegraphics[scale=1.15]{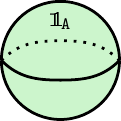}};
\endxy
=
\scalar{A}^{-1}.
\hspace{3.6cm}
\raisebox{-1.0cm}{\makeqed}
\hspace{-3.6cm}
\end{gather}
\end{lemman}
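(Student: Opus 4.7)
The plan is to prove both identities by neck-cutting followed by the idempotent-evaluation property \eqref{eq:action-schur}, reducing the resulting scalars to the explicit product formula of \fullref{lemma:scalars}.

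For the first identity (the bubble on an $\idem{A}$-facet), I would apply the neck-cutting relation \eqref{eq:neckcut} along a circle separating the two $a$-labeled facets of the bubble. This yields
\[
(-1)^{\binom{a}{2}}\sum_{\alpha\in\partition{a}{N-a}}\bigl(\text{cup with }\schur{\complementnorm{\alpha}}(\A-\Eq)\bigr)\cdot\bigl(\text{cap with }\schur{\alpha}(\B)\bigr),
\]
each piece attached to the underlying $\idem{A}$-facet. By \eqref{eq:action-schur} together with the specialization $\Eq\mapsto\alphS$, these Schur decorations collapse to their numerical values $\schur{\complementnorm{\alpha}}(\wcolor{A}-\alphS)$ and $\schur{\alpha}(\wcolor{A})$. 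The geometric residue is the bare identity disk carrying $\idem{A}$, and the collected scalar equals
\[
(-1)^{\binom{a}{2}}\sum_{\alpha\in\partition{a}{N-a}}\schur{\alpha}(\wcolor{A})\,\schur{\complementnorm{\alpha}}(\wcolor{A}-\alphS)=(-1)^{\binom{a}{2}}\scalar{A,\alphS\setminus A}=\scalar{A},
\]
by the final formula in \fullref{lemma:scalars}.

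For the second identity, I would treat the closed-foam evaluation by a dual neck-cutting reduction combined with the sphere relation \eqref{eq:neckcut}: expanding $\idem{A}$ in the Schur basis on the single facet, only the top-box $\schur{\Ybox{a,N-a}}$ coefficient survives the sphere relation, and identifying this coefficient with $\scalar{A}^{-1}$ follows from Frobenius duality for the trace $\trace_a$ applied to the explicit idempotent together with \fullref{lemma:scalars}. Alternatively, one can combine the first relation with a closing operation to obtain a compatibility of the form $\scalar{A}\cdot X=1$ where $X$ denotes the value of the closed foam; non-vanishing of $\scalar{A}$ guaranteed by \fullref{lemma:scalars} then forces $X=\scalar{A}^{-1}$.

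The hardest part is keeping the signs straight, namely the $(-1)^{\binom{a}{2}}$ from neck-cutting interacting with the $(-1)^{|\complementtrans{\alpha}|}$ appearing in Schur-polynomial identities on complementary alphabets, and correctly identifying which facet inherits the idempotent decoration at each step of the neck-cutting. Once that bookkeeping is arranged, both identities reduce essentially directly to the product formula in \fullref{lemma:scalars}.
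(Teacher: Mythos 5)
Your proposal is correct and takes essentially the same route as the paper: the first relation is proved exactly as you describe, by specializing the neck-cutting relation \eqref{eq:neckcut}, evaluating the resulting Schur decorations on the $\idem{A}$-colored facet via \eqref{eq:action-schur}, and summing the scalar using \fullref{lemma:scalars}. For the sphere relation the paper uses what you offer as your alternative argument, namely composing the neck-cutting relation with an ($\idem{A}$-decorated) cap to get an identity of the form $\scalar{A}\cdot X=1$ (with $X$ the sphere value), forcing $X=\scalar{A}^{-1}$; your primary route, expanding $\idem{A}$ in the Schur basis and keeping only the top-box coefficient via the sphere relation, is a valid direct variant of the same computation.
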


\begin{proof}
The first relation follows from the $\alphS$-specialization of 
the neck-cutting relation \eqref{eq:neckcut} 
combined with \eqref{eq:action-schur} and 
\fullref{lemma:scalars}. The relation for the sphere 
is obtained by composing both sides of the neck-cutting 
relation \eqref{eq:neckcut} with a cap.
\end{proof}

\begin{lemman}\label{lemma:scalar-the-third}
The following relation holds in $\FoamS{N}$:
\begin{gather*}
\xy
(0,0)*{\includegraphics[scale=1.15]{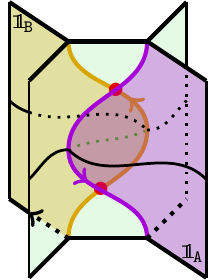}};
\endxy
=
\scalar{A,B}
\xy
(0,0)*{\includegraphics[scale=1.15]{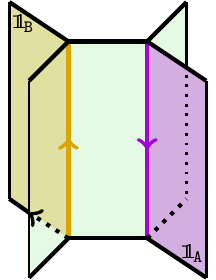}};
\endxy
\quad,\quad
\xy
(0,0)*{\includegraphics[scale=1.15]{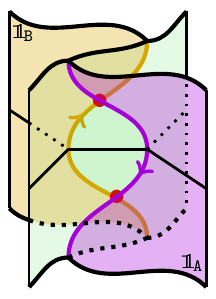}};
\endxy
=
\scalar{A,B}
\xy
(0,0)*{\includegraphics[scale=1.15]{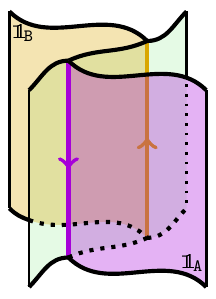}};
\endxy
\\
\label{eq:colblister}
\xy
(0,0)*{\includegraphics[scale=1.15]{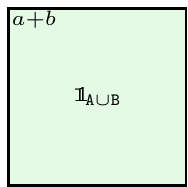}};
\endxy
\;=\;
\scalar{A,B}
\xy
(0,0)*{\includegraphics[scale=1.15]{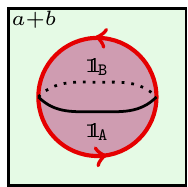}};
\endxy
\hspace{4.35cm}
\raisebox{-.8cm}{\makeqed}
\hspace{-4.35cm}
\end{gather*}
\end{lemman}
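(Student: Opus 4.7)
The plan is to reduce each of the three relations to a known identity in $\Foam{N}$ from Section \ref{subsec:equi-rel} and then exploit the fact that polynomial decorations act on an $\idem{A}$-colored facet by evaluation at the subset $\wcolor{A}\subset\alphS$, via \eqref{eq:action-schur}.

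For the first two relations I would follow the template of the proof of Lemma \ref{lemma:scalar-the-second}. Applying the zip-off relation \eqref{eq:zipoff} at the relevant seam separates the two idempotent-colored facets at the cost of a decoration
\[
\sum_{\alpha\in\partition{a}{b}}(-1)^{|\complementtrans{\alpha}|}\schur{\alpha}(\A)\schur{\complementtrans{\alpha}}(\B),
\]
distributed between the two resulting facets, where $\A$ and $\B$ are the alphabets on the $\idem{A}$- and $\idem{B}$-colored facets respectively. By \eqref{eq:action-schur} each Schur polynomial acts on its facet as evaluation at the corresponding subset of $\alphS$, so the sum collapses to
\[
\sum_{\alpha\in\partition{a}{b}}(-1)^{|\complementtrans{\alpha}|}\schur{\alpha}(\wcolor{A})\schur{\complementtrans{\alpha}}(\wcolor{B}) = \scalar{A,B}
\]
by the first identity of Lemma \ref{lemma:scalars}, and what is left over is precisely the right-hand side foam. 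The two diagrams in the lemma are mirror versions of the same situation and differ only in the orientation of the seam, so the same argument applies in both cases (with the evident adjustment of signs).

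For the colored blister, I would apply the blister-removal \eqref{eq:blister1} with decorations $\idem{A}$ and $\idem{B}$ on the two faces of the digon, producing a single $(a+b)$-labelled facet carrying the decoration $\zeta(\idem{A}\idem{B})$. Expanding the idempotents in the Schur basis and combining the Sylvester-operator computation of Example \ref{example:digon} with Lemma \ref{lemma:scalars} then identifies this composite decoration as $\scalar{A,B}$ times the idempotent $\idem{A\cup B}$ on the merged facet, as required. Alternatively, the blister relation can be deduced from the first two by a bending move of the shape \eqref{eq:bending-trick}.

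The main obstacle I anticipate is the bookkeeping of signs: the $(-1)^{|\complementtrans{\alpha}|}$ factors from \eqref{eq:zipoff}, the $(-1)^{\binom{a}{2}}$ factors from neck-cutting \eqref{eq:neckcut}, and the orientation conventions on seams must conspire exactly to produce $\scalar{A,B}$ rather than a signed variant of it, which is precisely the reason for the prefactors introduced before Lemma \ref{lemma:scalar-the-first}. A useful consistency check is provided by Lemma \ref{lemma:admissibility}: whenever $\wcolor{A}\cap\wcolor{B}\neq\emptyset$, the idempotent $\idem{A\cup B}$ on a merged facet is undefined, but in precisely that case $\scalar{A,B}=0$ by the last identity of Lemma \ref{lemma:scalars}, so both sides of each relation vanish and the equalities are trivially true.
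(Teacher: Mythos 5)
Your treatment of the first two relations follows essentially the same route as the paper: the paper's proof is a one-line citation of the Queffelec--Rose relations (3.13)--(3.14) and the Robert--Wagner/Rose--Wedrich squeeze relation, imported via \fullref{proposition:QRholds}, together with \eqref{eq:action-schur}; your unpacking (an undeformed relation writing the left-hand foam as a dual-Schur-decorated sum, evaluation of decorations on idempotent-colored facets via \eqref{eq:action-schur}, and identification of the resulting sum as $\scalar{A,B}$ by \fullref{lemma:scalars}) is exactly what that citation amounts to. The only imprecision is that the foams in the first two relations have all three labels $a$, $b$, $a{+}b$ present around the seam, so strictly you need the general-label relations \cite[(3.13)--(3.14)]{QR} (or \cite[(4-7)]{RW}) rather than only \eqref{eq:zipoff}, which is the special case with two zero labels; the mechanism and the scalar are the same, so this is a matter of which instance you quote. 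Your admissibility consistency check is a nice touch.

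The third part, however, contains a genuine error in exactly the scalar bookkeeping you flag as the main danger: the identity $\zeta(\idem{A}\idem{B})=\scalar{A,B}\,\idem{A\cup B}$ is false; the correct value is $\scalar{A,B}^{-1}\,\idem{A\cup B}$. For instance, take $N=2$, $a=b=1$, $\wcolor{A}=\{\roots_1\}$, $\wcolor{B}=\{\roots_2\}$, so that $\idem{A}$ and $\idem{B}$ are represented by $(X-\roots_2)/(\roots_1-\roots_2)$ and $(Y-\roots_1)/(\roots_2-\roots_1)$, and $\zeta$ extracts the coefficient of $X=\schur{(1)}(X)$ in the expansion over $\sym(X\cup Y)$. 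Since $(X-\roots_2)(Y-\roots_1)=XY-\roots_2(X+Y)+\roots_1\roots_2+(\roots_2-\roots_1)X$, one gets $\zeta(\idem{A}\idem{B})=(\roots_2-\roots_1)/\bigl((\roots_1-\roots_2)(\roots_2-\roots_1)\bigr)=(\roots_1-\roots_2)^{-1}=\scalar{A,B}^{-1}$, not $\scalar{A,B}=\roots_1-\roots_2$; the same inverse pattern persists for general $N,a,b$ and is consistent with \fullref{lemma:scalar-the-first} (the $\idem{A}$-colored sphere is $\scalar{A}^{-1}$) and with the denominators appearing later in \eqref{eq:idem-disc2}, which arise precisely from collapsing an idempotent-colored blister. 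So collapsing the blister via \eqref{eq:blister1} costs $\scalar{A,B}^{-1}$, and your derivation as written either proves \eqref{eq:colblister} with the scalar inverted or applies \eqref{eq:blister1} to the wrong one of the two pictured foams; you need to identify which side of \eqref{eq:colblister} actually carries the blister and redo the bookkeeping accordingly. The suggested alternative via the bending trick does not avoid this, since closing up introduces colored cups, caps and spheres that contribute the same inverse scalars.
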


\begin{proof}
These relations follow immediately 
from \cite[Relations (3.13) and (3.14)]{QR} as 
well as \cite[Relation (4-7)]{RW}, imported 
via \fullref{proposition:QRholds}, 
and \eqref{eq:action-schur}.
\end{proof}

\begin{remark}\label{remark:flow-chart}
To reduce the complexity of 
computations appearing in \fullref{sec:functorial}, 
we will use \textit{\fcds}\!\! which show the interaction of foam 
facets with a chosen facet (or union of facets) 
in a foam, see \fullref{fig:flowchart}. 

\begin{figure}[ht]
\begin{gather*}
\xy
(0,0)*{\includegraphics[scale=1.15]{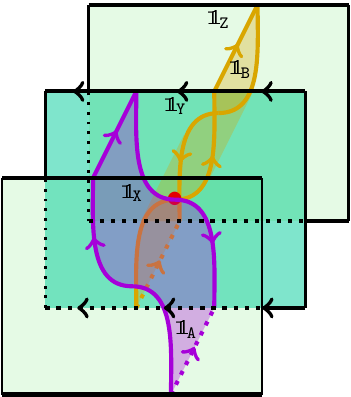}};
\endxy
\leftrightsquigarrow
\xy
(0,0)*{\includegraphics[scale=1.15]{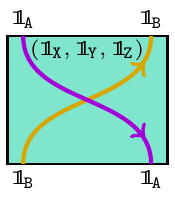}};
\endxy
=
\xy
(0,0)*{\includegraphics[scale=1.15]{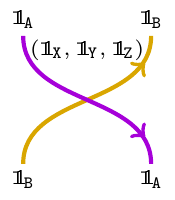}};
\endxy
\end{gather*}
\caption{A \fcd of a foam.
}\label{fig:flowchart}
\end{figure}

Here we adopt the convention that the facets lying in the drawing surface all carry the standard orientation of $\R^2$. The facets in front of the drawing surface are 
colored purple $\purplebox$ and those behind 
golden $\goldenbox$ (and later also: cyan $\cyanbox$). For 
readers familiar with the relationship 
between foams and categorified quantum 
groups (as developed in \cite{Ma1,MPT,LQR,QR}), we emphasize 
that the orientation conventions in phase diagrams 
are not identical to those used in the string diagrams 
of the skew Howe dual categorified quantum group.

The first equations in \fullref{lemma:scalar-the-third}
are simply
\begin{gather}\label{eq:idem-KLR}
\xy
(0,0)*{\includegraphics[scale=1.15]{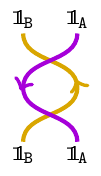}};
\endxy
=
\scalar{A,B}
\xy
(0,0)*{\includegraphics[scale=1.15]{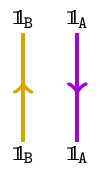}};
\endxy
\quad,\quad
\xy
(0,0)*{\includegraphics[scale=1.15]{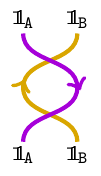}};
\endxy
=
\scalar{A,B}
\xy
(0,0)*{\includegraphics[scale=1.15]{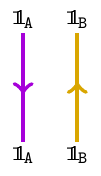}};
\endxy
\end{gather}
The following \fcds 
represent associativity and MP relations on 
foams, which hold in $\Foam{N}$ by 
\fullref{proposition:QRholds}:
\begin{gather}\label{eq:MP-flow}
\begin{aligned}
\xy
(0,0)*{\includegraphics[scale=1.15]{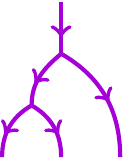}};
\endxy
&=
\reflectbox{
\xy
(0,0)*{\includegraphics[scale=1.15]{figs/fig-1-68}};
\endxy
}
\quad,\quad
\xy
(0,0)*{\includegraphics[scale=1.15]{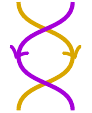}};
\endxy
\;=\;
\xy
(0,0)*{\includegraphics[scale=1.15]{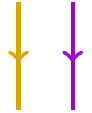}};
\endxy
\end{aligned}
\end{gather}
The relations obtained be 
reversing the orientation on all 
seams in \eqref{eq:MP-flow} also hold. 
Similarly, the pitchfork moves 
hold for all possible orientations on seams 
in $\Foam{N}$, see \cite[Relations (3.28) and (3.29)]{QR} 
(imported via \fullref{proposition:QRholds}):
\begin{gather}\label{eq:idem-pitchfork}
\xy
(0,0)*{\includegraphics[scale=1.15]{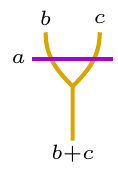}};
\endxy
=
\xy
(0,0)*{\includegraphics[scale=1.15]{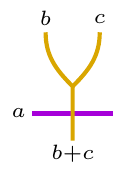}};
\endxy
\quad, \quad
\xy
(0,0)*{\includegraphics[scale=1.15]{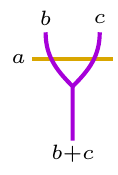}};
\endxy
=
\xy
(0,0)*{\includegraphics[scale=1.15]{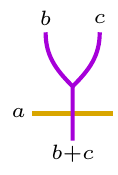}};
\endxy
\end{gather}
We will use several other versions of pitchfork relations, e.g.: \\
\noindent\begin{tabularx}{0.99\textwidth}{XX}
\begin{equation}\hspace{-8cm}\label{eq:idem-pitchfork2}
  \xy
(0,0)*{\includegraphics[scale=1.15]{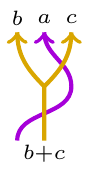}};
\endxy
=
\xy
(0,0)*{\includegraphics[scale=1.15]{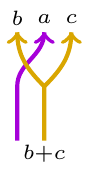}};
\endxy
  \end{equation} &
  \begin{equation}\hspace{-7cm}
    \label{eq:idem-pitchfork3}
 \xy
(0,0)*{\includegraphics[scale=1.15]{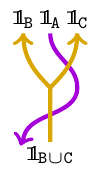}};
\endxy
= \scalar{A,C}
\xy
(0,0)*{\includegraphics[scale=1.15]{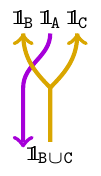}};
\endxy
  \end{equation}
\end{tabularx}\\
The relations of type \eqref{eq:idem-pitchfork2} hold in $\Foam{N}$ and can be deduced from \eqref{eq:idem-pitchfork} via \eqref{eq:MP-flow}. The relations of type \eqref{eq:idem-pitchfork3} hold in $\FoamS{N}$ and are obtained via \eqref{eq:idem-KLR}.
\end{remark}

\begin{lemman}\label{lemma:saddle-reverse} 
With $\wcolor{X}\subset \wcolor{A}$, the following 
\fcd relations hold in $\FoamS{N}$: 
\begin{gather}
\label{eq:idem-disc2}
\xy
(0,0)*{\includegraphics[scale=1.15]{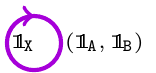}};
\endxy
\;=\frac{\scalar{X,B}}{\scalar{A\setminus X, X}}
\quad, \quad 
\xy
(0,0)*{\includegraphics[scale=1.15]{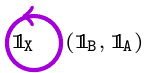}};
\endxy
\;=\frac{\scalar{B,X}}{\scalar{X,A\setminus X}}
\\
\label{eq:idem-KLR2}
\xy
(0,0)*{\includegraphics[scale=1.15]{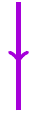}};
\endxy
=
\scalar{B,A}
\xy
(0,0)*{\includegraphics[scale=1.15]{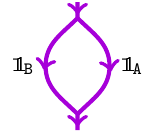}};
\endxy
\quad,\quad
\xy
(0,0)*{\includegraphics[scale=1.15]{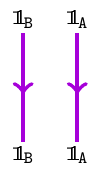}};
\endxy
=
\scalar{B,A}
\xy
(0,0)*{\includegraphics[scale=1.15]{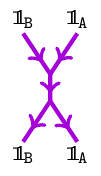}};
\endxy
\\
\label{eq:saddle-reverse}
\xy
(0,0)*{\includegraphics[scale=1.15]{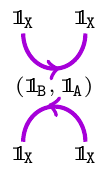}};
\endxy
= \frac{\scalar{B,X}}{\scalar{X,A\setminus X}}
\xy
(0,0)*{\includegraphics[scale=1.15]{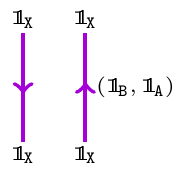}};
\endxy
\quad,\quad
\xy
(0,0)*{\includegraphics[scale=1.15]{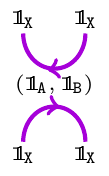}};
\endxy
= \frac{\scalar{X,B}}{\scalar{A\setminus X, X}}
\xy
(0,0)*{\includegraphics[scale=1.15]{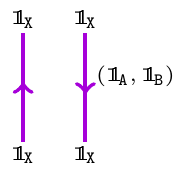}};
\endxy
\hspace{1.0cm}
\raisebox{-.7cm}{\makeqed}
\hspace{-1.0cm}
\end{gather}
\end{lemman}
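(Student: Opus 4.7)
The plan is to derive all three families of relations in this lemma from foam identities that have already been established. The common strategy is: rewrite each left-hand side so that an undecorated foam relation from $\Foam{N}$ (or its $\alphS$-specialization) becomes applicable; apply that relation; and then simplify the resulting expression using the idempotent action \eqref{eq:action-schur}, the multiplicativity and antisymmetry of $\scalar{-,-}$ from \fullref{lemma:scalars}, and the admissibility constraint \fullref{lemma:admissibility}.

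\textbf{Step 1 (the disc relations \eqref{eq:idem-disc2}).} The configurations on the left of \eqref{eq:idem-disc2} are of the type treated by \fullref{lemma:scalar-the-second}: an outer facet (labeled $a=|\wcolor{A}|$), a middle annulus, and an inner disc (labeled $|\wcolor{X}|$) intersecting along seams. First, by \fullref{lemma:admissibility}, the idempotent on the undecorated middle annulus is forced to be $\idem{A\setminus X}$. I would then apply \eqref{eq:idem-disc} to absorb the annulus at the cost of the sign $(-1)^{|X|(|A|-|X|)}$, which leaves a closed sphere-neck of label $|A\setminus X|$ on the decorated facet. Cutting this neck via the idempotent bubble relation \eqref{eq:idem-bubble} produces a factor $\scalar{A\setminus X}^{-1}$. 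Finally, the remaining face decorations simplify by \eqref{eq:action-schur}, and the sign combines with the identities $\scalar{A\setminus X} = (-1)^{\binom{|A\setminus X|}{2}} \scalar{A\setminus X,\,\alphS\setminus(A\setminus X)}$ and $\scalar{A,B\cup X} = \scalar{A,B}\scalar{A,X}$ from \fullref{lemma:scalars} to assemble the claimed ratio $\scalar{X,B}/\scalar{A\setminus X, X}$. The second identity is identical modulo swapping $\scalar{X,B}\leftrightarrow \scalar{B,X}$ and $\scalar{A\setminus X,X}\leftrightarrow \scalar{X,A\setminus X}$, which comes from the reversed seam orientations via the antisymmetry $\scalar{B,A}=(-1)^{|A||B|}\scalar{A,B}$.

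\textbf{Step 2 (the KLR-type relations \eqref{eq:idem-KLR2}).} These are exactly the identities \eqref{eq:idem-KLR} of \fullref{lemma:scalar-the-third} with the seam orientation reversed. I would prove them by reflecting the configurations across the horizontal plane; this reflection swaps source and target, exchanges the roles of $\wcolor{A}$ and $\wcolor{B}$, and therefore turns \eqref{eq:idem-KLR} into \eqref{eq:idem-KLR2} with $\scalar{A,B}$ replaced by $\scalar{B,A}$. Equivalently, one can insert a cancelling pair of oppositely oriented seam loops, reduce one of them by \eqref{eq:idem-KLR}, and check that the sign shift is exactly the factor $(-1)^{|A||B|}$ relating $\scalar{A,B}$ and $\scalar{B,A}$.

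\textbf{Step 3 (saddle reversal \eqref{eq:saddle-reverse}).} The two saddle-reversal identities are obtained by composing a saddle on top of the discs treated in Step 1. Concretely, I would decompose each left-hand side as (identity saddle at $\wcolor{X}$) $\circ$ (the disc configuration of \eqref{eq:idem-disc2}), then import the scalar computed in Step 1. The resulting scalar is $\scalar{X,B}/\scalar{A\setminus X, X}$ or $\scalar{B,X}/\scalar{X, A\setminus X}$ depending on orientations, which is exactly what is claimed.

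\textbf{Main obstacle.} The difficulty is not in any single deep computation but in the careful bookkeeping: keeping track of the seam orientations (which determine which variant of $\scalar{-,-}$ appears), the idempotent labels that \fullref{lemma:admissibility} forces on non-drawn facets, and the precise interplay between the binomial sign $(-1)^{|X|(|A|-|X|)}$ from \eqref{eq:idem-disc}, the sign $(-1)^{\binom{a}{2}}$ hidden in the definition of $\scalar{A}$, and the antisymmetry of $\scalar{-,-}$. The cleanest way to manage this is to translate every step into the phase-diagram language of \fullref{remark:flow-chart}, where the pitchfork relations \eqref{eq:idem-pitchfork2}--\eqref{eq:idem-pitchfork3} and the MP identities \eqref{eq:MP-flow} are manifest, and to collect the signs only at the end.
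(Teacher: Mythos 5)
Your Step 1 is where the argument breaks, and it cannot be repaired by bookkeeping alone. The relation \eqref{eq:idem-disc} of \fullref{lemma:scalar-the-second} removes the annulus completely at the cost of the sign $(-1)^{b(a-b)}$ --- there is no residual ``closed sphere-neck'' left to cut afterwards --- and, more importantly, it pertains to the seam-orientation pattern for which the annulus removal already holds in $\Foam{N}$ with only a sign. The configurations of \eqref{eq:idem-disc2} carry the reversed seam orientations (this is the whole point of \fullref{lemma:saddle-reverse}): they become invertible only after specialization, and their value is not a sign but the ratio $\scalar{X,B}/\scalar{A\setminus X, X}$, which depends on $\wcolor{B}$. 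Nothing in your proposed chain --- the sign $(-1)^{|X|(|A|-|X|)}$, a factor $\scalar{A\setminus X}^{\pm 1}$ from \eqref{eq:idem-bubble}, and the identities of \fullref{lemma:scalars}, all of which pair $\wcolor{A\setminus X}$ or $\wcolor{X}$ with $\alphS$-complements --- produces any factor involving $\wcolor{B}$, so the claimed ratio cannot be assembled. The paper instead unpacks the phase diagram into an honest foam identity and detaches the annulus from the rear facet via the zip-off relation \eqref{eq:zipoff}; it is precisely the zip-off decorations, evaluated against the idempotents via \eqref{eq:action-schur}, together with the colored blister relation of \fullref{lemma:scalar-the-third}, that create the factors $\scalar{X,B}$ and $\scalar{A\setminus X,X}$. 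Your shortcut bypasses exactly this step.

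Step 2 has a related problem: reflecting a foam in a horizontal plane exchanges source and target and reverses seam directions, but it does not relabel facets, so it turns \eqref{eq:idem-KLR} into a relation with the \emph{same} scalar $\scalar{A,B}$ rather than $\scalar{B,A}$; the discrepancy $(-1)^{|A||B|}$ has to be earned by an actual computation. (The paper does this by re-running the colored digon/blister argument of \fullref{lemma:scalar-the-third} combined with the MP relations \eqref{eq:MP-flow}; your alternative sketch of inserting a cancelling pair of seam loops is the right kind of idea, but it is precisely the part that needs to be carried out.) Step 3, realizing \eqref{eq:saddle-reverse} as a saddle stacked on the disc configuration, is geometrically plausible, but it inherits the unproved scalar from Step 1, so the lemma as a whole is not established by the proposal.
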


\begin{proof}
Relation \eqref{eq:idem-disc2} is the shorthand notation for
\begin{gather*}
\xy
(0,0)*{\includegraphics[scale=1.15]{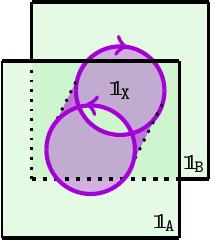}};
\endxy
\;=
\frac{\scalar{X,B}}{\scalar{A\setminus X, X}}
\;\;
\xy
(0,0)*{\includegraphics[scale=1.15]{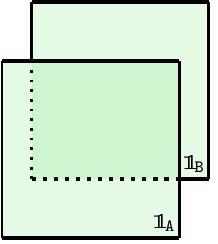}};
\endxy
\end{gather*}
which can be checked by detaching the annulus 
from the rear facet via \eqref{eq:zipoff} and 
collapsing the resulting blister in the front 
facet via \eqref{eq:colblister}. The relations 
in \eqref{eq:idem-KLR2} and \eqref{eq:saddle-reverse} 
follow analogously from the relations in 
\fullref{lemma:scalar-the-third} and MP relations.
\end{proof}

Note that planar isotopies of \fcds relative to their boundary 
correspond to isotopies of foams. Thus, relations 
obtained by isotoping any of the above 
relations continue to hold.

\section{Equivariant, colored link homology via foams and its specializations}\label{sec:foam}
\subsection{Colored link homology}\label{subsec:homology-stuff}
Given an additive, $\sym(\Eq)$-linear 
canopolis $\mathcal C$, 
we obtain from it an additive, $\sym(\Eq)$-linear 
canopolis of (bounded) complexes 
$\Kom(\mathcal C)$ and chain maps between them. 
On chain complexes, the planar algebra 
operation is defined 
to take the planar composites of all chain
groups, 
and the structure of differentials between 
them is modeled on the tensor product of chain complexes. More precisely, we require the inputs of all planar 
algebra operations to be ordered, which determines the 
order in which the formal tensor product is taken and 
thus, the \textit{Koszul signs} in the differentials 
of the resulting tensor product complex. 

\begin{convention}\label{convention:Koszul}
The tensor product of two complexes $(A^*,d_A)$ and 
$(B^*,d_B)$ is defined to be: 
\[
(A\otimes B)^* = 
{\textstyle\bigoplus_{a+b=*}} A^a \otimes B^b, 
\quad d_{(A\otimes B)^*}=  
{\textstyle\sum_{a+b =*}} (-1)^b d_{A^a}
\otimes \id_{B^b} + \id_{A^a}\otimes d_{B^b}
\]
The reordering isomorphism $A\otimes B \cong B\otimes A$ is 
defined to act as $(-1)^{a b}$ times the swap map 
$A^a \otimes B^b \to B^b\otimes A^a$. Note that in the setting 
of a canopolis of chain complexes, where the swap map is the 
identity, the reordering isomorphisms act as the identity, 
except on terms of 
doubly-odd homological degree. This convention agrees with \cite[Appendix A.6.1]{CMW}. 
\end{convention} 

Since null-homotopic chain maps form an ideal with respect to planar 
algebra operations, it also makes sense to 
consider the homotopy canopolis of (bounded) complexes $\Komh(\mathcal C)$ with morphisms given by chain maps up to homotopy. Forgetting the additional categorical structure in 
$\Kom(\mathcal C)$ or $\Komh(\mathcal C)$, 
we can also view both as planar algebras.
A detailed discussion of these constructions 
is given 
in e.g. \cite[Sections 3 and 4.1]{BN1}. All these notions extend to the graded setup as well.

\begin{convention}\label{convention:links}
All links and tangles, as well as their diagrams, are 
assumed to be labeled (or \textit{colored}) and oriented. Recall that two such tangle 
diagrams represent 
the same tangle if and only if the diagrams can be obtained 
from each other via a finite number of planar 
isotopies and Reidemeister moves 
as in \fullref{fig:movie1}, i.e. the ones 
displayed therein as well as their orientation and crossing variations.
\end{convention}

The tangles and their diagrams form planar algebras as presented in 
\fullref{subsection:canopolis}. Henceforth,
we consider the planar algebra of 
tangle diagrams $\coltangles$, which is 
generated by single strands, positive and negative crossings, 
as in \fullref{fig:tangle-gen}.

\begin{figure}[ht]
\begin{gather*}
\xy
(0,.75)*{\includegraphics[scale=1.15]{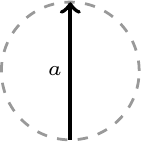}};
\endxy
\quad,\quad
+\colon
\xy
(0,0)*{\includegraphics[scale=1.15]{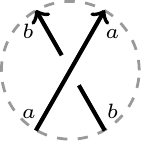}};
\endxy
\quad,\quad
-\colon
\xy
(0,0)*{\includegraphics[scale=1.15]{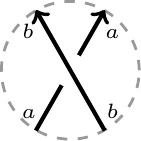}};
\endxy
\end{gather*}
\caption{Generators of $\coltangles$.
}\label{fig:tangle-gen}
\end{figure}

\begin{definitionn}\label{definition:basic-complexes}
Consider the map of planar algebras
\begin{equation}\label{eq:the-map-of-pas}
\Hgen{\cdot}\colon\coltangles\to\Kom(\freefoameq),	
\end{equation}
which is defined on generating 
tangle diagrams from \fullref{fig:tangle-gen} as follows.
\smallskip
\begin{enumerate}[label=$\bullet$]

\setlength\itemsep{.15cm}

\item It maps the $a$-labeled strand 
to the corresponding web, regarded as a 
complex concentrated in homological degree zero.

\item It maps positive crossings with an overstrand 
labeled $a$ and understrand labeled $b$ with $a\geq b$ 
to:
\begin{equation}\label{eq:crossingcx}
\begin{aligned}
\left\llbracket
\vphantom{\int^{\substack{1\\ 1\\1\\1}}}\right.\hspace*{-.15cm}
\xy
(0,0)*{\includegraphics[scale=1.15]{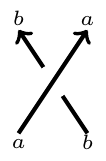}};
\endxy\hspace*{-.15cm}
\left.\vphantom{\int^{\substack{1\\ 1\\1\\1}}}\right\rrbracket
\stackrel{\raisebox{.05cm}{\tiny $a{\geq}b$}}{=}
q^{-x}
\underline{
\xy
(0,0)*{\includegraphics[scale=1.15]{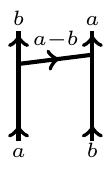}};
\endxy
}
&\xrightarrow{\phantom{.}d^+_{0}\phantom{.}}
q^{1-x}
\xy
(0,0)*{\includegraphics[scale=1.15]{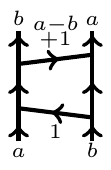}};
\endxy
\xrightarrow{\phantom{.}d^+_1\phantom{.}}
\cdots
\\
\cdots
&\xrightarrow{d_{b-2}^+} q^{b-1-x}
\xy
(0,0)*{\includegraphics[scale=1.15]{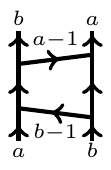}};
\endxy
\xrightarrow{d^+_{b-1}}
q^{b-x}
\xy
(0,0)*{\includegraphics[scale=1.15]{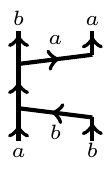}};
\endxy
\end{aligned}
\end{equation}
Again, powers of $q$ denote shifts in 
the $q$-degree, $x=b(N-b)$ and 
the underlined term is in homological 
degree zero. The differentials are given 
by the foams in \fullref{fig:differential}.
 
\item A positive crossing with labels $b>a$ is mapped to the complex obtained from 
the one in \eqref{eq:crossingcx} by reflecting 
webs in a vertical axis (and foams in a corresponding plane) 
and swapping labels $a$ and $b$.

\item The complexes for the negative crossings are 
obtained from the positive crossing complexes by 
inverting the $q$-degrees 
and the homological degrees, and 
reflecting the differential foams in a horizontal plane.
\makeqedtri
\end{enumerate}
\end{definitionn}

\begin{figure}[ht]
\begin{gather*}
d_k^+=
\xy
(0,0)*{\includegraphics[scale=1.15]{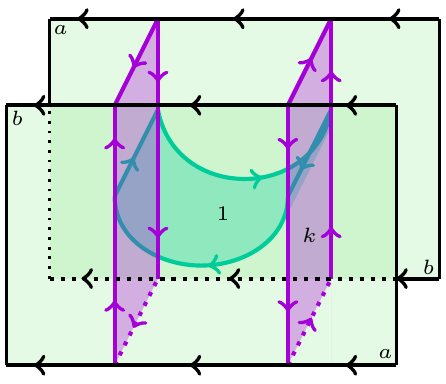}};
\endxy
\end{gather*}
\caption{The differentials $d_k^+$. 
(The other differentials are similar.)
}\label{fig:differential}
\end{figure}

\begin{example}\label{example-diffs}
In case $a=b=1$ the complexes assigned 
to positive and negative crossings are
of length two with differentials 
given by zip and unzip foams as in \eqref{eq:foamgen2}.
\end{example}

\begin{theorem}\label{theorem:wulink}
If $T_D,T^{\prime}_D$ are 
two tangle diagrams representing the 
same colored, oriented tangle, then $\Hgen{T_D}\cong\Hgen{T^{\prime}_D}$ holds in 
$\Komh(\Foam{N})$, i.e. $\Hgen{\cdot}$ is an 
invariant of colored, oriented tangles.
\end{theorem}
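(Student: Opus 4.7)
The plan is to reduce the theorem to checking a finite list of local relations via the canopolis structure, and then to verify those relations using the foam calculus developed in Section~\ref{sec:glnfoams}. Since $\Hgen{\cdot}$ is defined as a map of planar algebras, planar isotopy invariance is automatic: the output depends only on the way generators are assembled via the planar algebra operations, which model planar isotopy classes of strands and crossings. Thus by Convention~\ref{convention:links} it suffices to show that $\Hgen{T_D}\cong \Hgen{T_D'}$ in $\Komh(\Foam{N})$ whenever $T_D$ and $T_D'$ differ by a single Reidemeister move (R1, R2, R3), in all orientation and color variants. Moreover, because the canopolis operations are functorial on homotopy equivalences, it is enough to establish each Reidemeister equivalence locally, i.e.\ for the elementary tangles pictured in the move.

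For each Reidemeister move I would first write down the two multi-step complexes produced by $\Hgen{\cdot}$, which are length $b+1$ tensor complexes where $b$ is the smaller label at each crossing. Then I would systematically apply Gaussian elimination (a.k.a.\ the cancellation or simplification lemma for complexes) to successively strip off acyclic subcomplexes, using the foam relations of Sections~\ref{subsec:equi-rel}--\ref{subsec:special}: the neck-cutting and sphere relations (Lemma~\ref{lemma:neck-cut}), the zip/unzip, MP and decoration migration relations (Example~\ref{example:QRholds}), the blister removal and bubble relations (Example~\ref{example:blister1} and Lemma~\ref{lemma:scalar-the-second}), and the pitchfork relations discussed in Remark~\ref{remark:flow-chart}. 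In each case the bubbles produced by composing differentials evaluate by the sphere relations to the terms needed to cancel off pairs of isomorphic objects, leaving a complex homotopy equivalent to $\Hgen{T_D'}$. For R1 the result is a single strand with decorations that vanish in the Grassmannian quotient; for R2 the iterated cancellation collapses the bicomplex to the identity of two parallel strands; and for R3 both sides reduce to the same web complex after repeated MP and pitchfork applications.

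A more conceptual shortcut is available via Proposition~\ref{proposition:QRholds}: the relations defining the Queffelec--Rose foam $2$-category hold in $\Foam{N}$, and the complexes in Definition~\ref{definition:basic-complexes} are precisely those obtained under skew Howe duality from Rickard complexes in the categorified quantum group, so invariance under Reidemeister moves follows from the corresponding statements for Rickard complexes proved in \cite{QR} (and earlier, in cases, in \cite{MSV}). I would structure the proof so as to record the explicit chain maps realizing each Reidemeister equivalence, since these will be needed in Section~\ref{sec:functorial}; this bookkeeping is what distinguishes an invariance proof from a mere existence statement.

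The main obstacle is R3 in the presence of distinct colors and mixed orientations: the resolved bicomplexes are large, involve many singular vertices of types (III), and the Gaussian elimination must be performed in a way compatible with the canopolis grading conventions and Koszul signs of Convention~\ref{convention:Koszul}. I would handle this either by choosing a specific induction on the minimum label (reducing the colored case to the uncolored one via the fact that colored strands split into parallel fundamental strands under categorical skew Howe duality), or by directly importing the computation from \cite{QR} and checking that the sign conventions match those fixed in Definition~\ref{definition:basic-complexes} and Convention~\ref{convention:Koszul}.
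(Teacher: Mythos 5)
Your skeleton matches the paper's: invariance under planar isotopy is built into the canopolis/planar-algebra formalism, so everything reduces to local chain homotopy equivalences for the Reidemeister moves, and the explicit equivalences are indeed what gets recorded for later use in \fullref{sec:functorial}. Where you diverge is in how the local checks are done. The paper does not run Gaussian elimination itself; it imports the computations from the literature with a specific division of labor: Queffelec--Rose \cite{QR} supply the colored (R2+) and (R3+) equivalences \emph{and} the fork slide/fork twist isomorphisms, and then Wu's strategy \cite{Wu2} is invoked to deduce the colored (R1), (R2--) and (R3--) moves from their uncolored versions, which are checked by hand as in \cite[Section 7]{MSV}; finally one observes that all of this goes through verbatim over the ground ring $\sym(\Eq)$ in $\foam{N}$. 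Your ``conceptual shortcut'' is essentially this route, but as stated it overreaches: the Rickard-complex/skew Howe results of \cite{QR} only give the braid-like moves (R2+), (R3+), not the antiparallel moves (R2--), (R3--) nor (R1), so these cases genuinely need the fork-slide reduction to the uncolored case (you gesture at this for R3 via ``induction on the minimum label'', but it is needed for R1 and R2-- as well, and the mechanism is fork slides/twists rather than a literal splitting of a colored strand into parallel fundamental strands, which does not hold on the nose in the foam category). Your primary route---direct Gaussian elimination for all colored, mixed-orientation variants---would in principle work and is self-contained, but it is a substantially heavier computation than anything the paper attempts, and you would still have to manage the Koszul signs of \fullref{convention:Koszul} and the equivariant decorations by hand; the paper's citation-based argument buys brevity at the cost of having to verify that the quoted constructions are compatible with the equivariant foam relations, which it addresses with a one-line remark. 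So: correct in outline, with the caveat that the appeal to \cite{QR} alone does not cover all orientation variants and must be supplemented exactly as the paper does.
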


\begin{proof} 
Any reordering of the crossings in a 
tangle diagram induces an isomorphism between 
the respective invariants; so we disregard the 
ordering for the purpose of the following proof. 
By virtue of the planar algebraic construction, it suffices to show that the chain complexes associated to the tangles on both sides of each Reidemeister move are chain homotopy equivalent.

This result has already been proved 
in a variety of different categorifications of 
the MOY calculus, so we refer to the literature 
and only comment on the slight variations that we need. 

The proof closest to our endeavor as regards 
generality has been given by Wu 
in \cite[Proof of Theorem 1.1]{Wu2} in the context 
of his equivariant, colored 
link homology constructed 
via matrix factorizations. A proof purely in the language of webs 
and foams (although non-equivariant and with slightly different foam 
relations) has been first given by Mackaay--Sto{\v{s}}i{\'c}--Vaz \cite[Section 7]{MSV} 
for uncolored tangles. Queffelec--Rose \cite[Section 4.3]{QR} have 
provided proofs for the Reidemeister moves 
that we will call (R2+) and (R3+) 
(see \fullref{subsec:simple-complexes}) for all colors. 
They have also described the behavior of the invariant 
under \textit{fork slides} and \textit{fork twisting}, 
see \cite[Proofs of Theorem 4.7 and Proposition 4.10]{QR}.
These results immediately generalize to the equivariant 
setup with ground ring $\sym(\Eq)$ in 
$\foam{N}$ and---following Wu's strategy \cite{Wu2}---guarantee 
that the Reidemeister moves of type (R1), (R2--) and (R3--) hold for 
all colors if they 
hold in the uncolored case. The latter is easily 
checked by hand, e.g. analogous to the proof in \cite[Section 7]{MSV}.
\end{proof}

\begin{remark}\label{remark:integrality-2}
(Integrality.) \fullref{theorem:wulink} and its proof 
outlined above hold over $\Z$. In particular, 
the Reidemeister homotopy equivalences provided by 
Mackaay--Sto{\v{s}}i{\'c}--Vaz \cite[Section 7]{MSV} are 
integral. In the colored case, the Reidemeister 
homotopy 
equivalences used by Queffelec--Rose as well as their fork slides 
and fork twists are also integral, see \cite[Proposition 4.10]{QR}.
\end{remark}

\begin{definition}\label{definition:link-homology}
Let $L_D$ be a colored, oriented link diagram. Then the  
equivariant, colored Khovanov--Rozansky homology of $L_D$ is the 
bigraded,
finite-rank $\sym(\Eq)$-module defined as
\[
\mathrm{KhR}^\Eq(L_D) = H_*(\mathcal{F}(\Hgen{L_D})),
\]
where $\mathcal{F}$ denotes the TQFT from \fullref{sec:foamsmodrel}. 
\end{definition}

By \fullref{theorem:wulink}, $\mathrm{KhR}^\Eq$ is invariant 
under Reidemeister 
moves and planar isotopies up to explicit isomorphisms. 

If one sets the variables in the alphabet $\Eq$ equal 
to zero before applying an analog of the TQFT 
$\mathcal{F}$ in \fullref{definition:link-homology}, one obtains 
the (non-equivariant) bigraded, colored, Khovanov--Rozansky 
link homologies of Wu \cite{Wu1} and Yonezawa \cite{Yon}, 
see \cite[Proposition 4.2]{RoW} 
and \cite[Theorem 4.11]{QR}. In the uncolored case, 
these agree with the original Khovanov--Rozansky link homologies \cite{KR}.
More generally, the alphabet $\Eq$ can be specialized 
to any $N$-element multiset $\Sigma$ of complex numbers, 
and one obtains deformed Khovanov--Rozansky link homologies $\mathrm{KhR}^{\Sigma}$, 
see \cite{Wu2, RW}. In the next section, we will 
explain this in detail for $\Sigma=\alphS$.

\begin{remark}\label{remark:R1move-shift}
We have chosen a grading convention which results 
in a tangle invariant that respects the Reidemeister $1$ move. 
Another natural grading convention leads to shifts under the 
Reidemeister $1$ move, but has the advantage of allowing 
invariance under fork slide moves. We will not pursue this 
further in this paper.
\end{remark}

\begin{remark}\label{remark:no-fance-signs-yet}
For the purpose of proving \fullref{theorem:wulink} it is not 
necessary to specify a particular pair of inverse chain 
homotopy equivalences for each Reidemeister move. Indeed, any such pair can be 
rescaled by a pair of inverse units in $\C$. However, for the purpose 
of defining chain maps associated to link cobordisms and for the proof 
of functoriality, such a specification is necessary. Arguments as in 
\fullref{subsec:uptounits} imply that the ambiguity is limited to 
rescaling by a unit in $\C$, and we will choose a particular scaling 
in \fullref{subsec:simple-complexes}. 
\end{remark}

\subsection{Generic deformations and Karoubi envelope technology}\label{subsec:func-karoubi}
As before, let us denote by $\alphS=\{\roots_1,\dots,\roots_N\}$ 
a set of $N$ pairwise different complex numbers.

\begin{lemman}(\cite[Lemma 4.2]{RW}.)\label{lemma:idem2}
In $\FoamS{N}$, the algebra of decorations on the identity foam on a 
web $W$ is the direct sum of one-dimensional summands. 
The corresponding idempotents 
are given by colorings of all facets by idempotents 
as in \fullref{lemma:idem}, which are admissible in
the sense of 
\fullref{lemma:admissibility}, i.e. the associated 
subsets of $\alphS$ add up around every singular seam.\qedmake
\end{lemman}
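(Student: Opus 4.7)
The plan is to reduce the statement to the single-facet case of \fullref{lemma:idem} combined with the seam-admissibility constraint of \fullref{lemma:admissibility}. First, I would observe that, by iterated application of the decoration migration relation \eqref{eq:dot-mig1}, every decoration on the identity foam $\idfoam_W$ can be written as a $\C$-linear combination of products of decorations placed on individual facets. Thus the decoration algebra is a quotient of $\bigotimes_f \sym(\A_f)$, where $f$ runs over the facets of $\idfoam_W$ and $\A_f$ is the alphabet attached to $f$ of size equal to its label.

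Next, on each single facet, \fullref{lemma:idem} supplies the decomposition $\sym(\A_f) = \bigoplus_{\wcolor{A}_f \subset \alphS,\, |\wcolor{A}_f|=|f|} \C\cdot \idem{A_f}$ into orthogonal, one-dimensional summands on which $p(\A_f)$ acts as the scalar $p(\wcolor{A}_f)$ via \eqref{eq:action-schur}. Taking the tensor product over all facets produces a direct sum decomposition of $\bigotimes_f \sym(\A_f)$ indexed by arbitrary set-valued colorings $\{\wcolor{A}_f\}_f$ of the facets. I would then invoke \fullref{lemma:admissibility} at every singular seam bounded by facets of labels $a$, $b$ and $a+b$: it forces the product $\idem{A}\idem{B}\idem{X}$ to vanish in $\FoamS{N}$ unless $\wcolor{A}\cup\wcolor{B}=\wcolor{X}$ (necessarily a disjoint union, by cardinality). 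Discarding all non-admissible colorings yields the claimed direct sum.

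It remains to show that each admissible coloring contributes a nonzero, one-dimensional summand, and that distinct admissible colorings are orthogonal. Orthogonality and one-dimensionality of each surviving factor are inherited facet by facet from \fullref{lemma:idem}, so the main obstacle is to verify that no admissible idempotent coloring collapses to zero globally. I would handle this by local computation in a neighborhood of each seam using \fullref{lemma:scalar-the-third} together with the nonvanishing of the scalars $\scalar{A,B}$ guaranteed by \fullref{lemma:scalars}. Since all seam-local contributions are invertible rescalings, they cannot cause a globally admissible coloring to vanish. This is the same strategy as in \cite[Lemma 4.2]{RW}, to which I would ultimately appeal for the bookkeeping details.
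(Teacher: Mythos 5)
Your outline matches the intended argument: the paper gives no independent proof of this lemma at all---it is quoted directly from \cite[Lemma 4.2]{RW}---and your reduction to facet-wise idempotents (\fullref{lemma:idem}) cut down by the seam condition (\fullref{lemma:admissibility}) is exactly the mechanism behind that citation, which you also invoke. The only place where your sketch is thinner than a proof is the final nonvanishing step: asserting that the seam-local relations are ``invertible rescalings'' does not by itself show that an admissibly colored identity foam is nonzero in $\FoamS{N}$, because vanishing there is detected by evaluating arbitrary closures (the universal construction), not by local moves. The standard repair, and what \cite[Lemma 4.2]{RW} in effect does, is to close up the admissibly colored identity foam and observe that the resulting colored closed foam evaluates, after specializing at the pairwise distinct roots of $\alphS$, to a nonzero product of root differences---precisely the nonvanishing of the scalars recorded in \fullref{lemma:scalars} that you cite; with that substitution your argument is complete and coincides with the cited one.
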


Recall that the Karoubi envelope $\Kar(\mathcal{C})$ 
of a category $\mathcal{C}$ has as object pairs $(O,e)$, where 
$O$ is an object of $\mathcal{C}$ and $e\colon O\to O$ is an idempotent. 
The morphisms from $(O,e)$ to $(O^{\prime},e^{\prime})$ are morphisms of 
$\mathcal{C}$ compatible with the idempotents, i.e. triples $(e,f,e^{\prime})$ 
with $f\colon O\to O^{\prime}$ such that $f\circ e=e^{\prime}\circ f$ 
holds in $\mathcal{C}$. 
In case $\mathcal{C}$ is additive, one can split idempotents, 
i.e. by writing $O=(O,\mathrm{id})$ and $\mathrm{im}(e)=(O,e)$, we get
\begin{equation*}
O\cong\mathrm{im}(e)\oplus\mathrm{im}(\mathrm{id}-e)\quad
(\text{in }\Kar(\mathcal{C})).
\end{equation*}
We will use 
a full subcategory 
of $\Kar(\FoamS{N})$ which contains all 
idempotents identified in \fullref{lemma:idem2} above.

\begin{definition}\label{definition:kar-foams}
Let $\FoamSH$ denote the full additive, $\sym(\Eq)$-linear subcategory 
of $\Kar(\FoamS{N})$  
containing all objects of the form $(W,c(W)\mathrm{id}_W)$ as well as 
their $q$-degree shifts. 
Here $W$ is a web and $c(W)\mathrm{id}_W$ is an admissibly 
idempotent-colored identity foam on $W$ as in 
\fullref{lemma:idem2}.
\end{definition}

We shall think of the objects $(W,c(W)\mathrm{id}_W)$ as 
idempotent-colored webs. $\FoamS{N}$ embeds as a full 
subcategory of $\FoamSH$ since the identity foam on every 
web can be split into the sum of its idempotent-colorings. 
Correspondingly, uncolored webs can be split into direct 
sums of idempotent-colored webs. For example:
\begin{equation*}
\xy
(0,0)*{\includegraphics[scale=1.15]{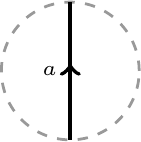}};
\endxy
\cong\bigoplus_{\wcolor{A}\subset\alphS}\;
\xy
(0,0)*{\includegraphics[scale=1.15]{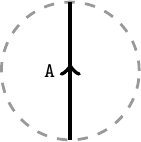}};
\endxy
\quad
(\text{in } \FoamSH).
\end{equation*}
Here the direct sum runs over all $a$-element 
subsets $\wcolor{A}$ of $\alphS$. Another example is:
\begin{gather}\label{eq:local-idems}
\xy
(0,0)*{\includegraphics[scale=1.15]{figs/fig-1-19}};
\endxy
\cong\bigoplus_{
\substack{
\wcolor{A},\wcolor{B}\subset\alphS, \\ \wcolor{A}\cap\wcolor{B}=\emptyset
}
}\;
\xy
(0,0)*{\includegraphics[scale=1.15]{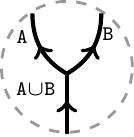}};
\endxy
\quad
(\text{in }\FoamSH).
\end{gather}
Note that the idempotent on the 
edge of the biggest label is determined by 
the other two due to the admissibility 
condition from \fullref{lemma:admissibility}.

We will denote by  
$\HgenS{\cdot}\colon \coltangles\to \Komh(\FoamS{N})\subset \Komh(\FoamSH)$
the $\alphS$-specialization of $\Hgen{\cdot}$, obtained 
via post composition with $\spec$ from 
\fullref{subsec:special}. From 
\fullref{theorem:wulink} we immediately obtain the following specialization.

\begin{theorem}\label{theorem:wulink2} 
If $T_D,T^{\prime}_D$ are 
two tangle diagrams representing the 
same colored, oriented tangle, then 
$\HgenS{T_D}{\cong}\HgenS{T^{\prime}_D}$ holds in 
$\Komh(\FoamS{N})$, i.e. $\!\HgenS{\cdot}\!$ is an 
invariant of colored, oriented tangles. 
\end{theorem}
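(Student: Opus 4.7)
The plan is to deduce Theorem \ref{theorem:wulink2} directly from Theorem \ref{theorem:wulink} by pushing forward the Reidemeister chain homotopy equivalences through the specialization functor $\spec\colon\Foam{N}\to\FoamS{N}$. First I would observe that $\spec$ is, by construction in \fullref{subsec:special}, a $\C$-linear canopolis morphism respecting the filtrations; it is obtained from the ring map $\sym(\Eq)\to\C$ sending $p(\Eq)\mapsto p(\alphS)$ and imposing the corresponding $\alphS$-specialized relations on $\Foam{N}$. As such, it extends term-wise to an additive, $\C$-linear canopolis morphism $\spec_*\colon\Kom(\Foam{N})\to\Kom(\FoamS{N})$, and descends further to the homotopy canopolises $\Komh(\Foam{N})\to\Komh(\FoamS{N})$, since any null-homotopic chain map in $\Kom(\Foam{N})$ is sent to a null-homotopic chain map via the same homotopy with $\spec$ applied to each of its components.

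Second, by definition $\HgenS{\cdot}=\spec_*\circ\Hgen{\cdot}$ as maps of planar algebras $\coltangles\to\Komh(\FoamS{N})$, because $\spec$ is the identity on the foams appearing in the differentials of \fullref{definition:basic-complexes} (after specializing $\Eq\mapsto\alphS$). Now let $T_D,T^{\prime}_D$ be two diagrams representing the same colored oriented tangle. By \fullref{convention:links} they are connected by a finite sequence of planar isotopies and Reidemeister moves, and Theorem \ref{theorem:wulink} provides, for each such move, an explicit chain homotopy equivalence in $\Komh(\Foam{N})$ between the two sides, built out of foams in $\Foam{N}$. Applying $\spec_*$ to each such equivalence yields a chain homotopy equivalence in $\Komh(\FoamS{N})$ between $\HgenS{T_D}$ and $\HgenS{T^{\prime}_D}$. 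Since the composition of such equivalences along the sequence of moves gives the desired isomorphism in $\Komh(\FoamS{N})$, invariance follows.

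The only place where one should be cautious is checking that the specialization $\spec_*$ does not accidentally annihilate one of the chain maps or homotopies underlying the Reidemeister equivalences. But this is automatic: the homotopy equivalences of \cite{Wu2, MSV, QR} imported in the proof of Theorem \ref{theorem:wulink} are defined over $\sym(\Eq)$ (in fact, essentially over $\symz(\Eq)$, cf.\ \fullref{remark:integrality-2}) and their being mutual inverses up to chain homotopy is witnessed by equalities of foams in $\Foam{N}$, which remain equalities after the $\C$-algebra homomorphism $\spec$ is applied. Thus the invariance of $\HgenS{\cdot}$ as a tangle invariant valued in $\Komh(\FoamS{N})$ is a direct corollary of Theorem \ref{theorem:wulink}, and the further inclusion into $\Komh(\FoamSH)$ via the Karoubi envelope from \fullref{definition:kar-foams} is a full embedding, so the statement in either target category is the same.
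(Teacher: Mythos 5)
Your proposal is correct and is essentially the paper's own argument: the paper defines $\HgenS{\cdot}$ as the post-composition of $\Hgen{\cdot}$ with the canopolis morphism $\spec$ and observes that \fullref{theorem:wulink2} follows immediately from \fullref{theorem:wulink}, exactly as you do by pushing the Reidemeister homotopy equivalences through $\spec_*$. Your cautionary remark about $\spec_*$ possibly annihilating maps is superfluous (a functor sends homotopy equivalences to homotopy equivalences regardless), but it does no harm.
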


The next lemma provides a 
decomposition of the complexes 
associated to crossings, which dramatically 
simplifies the computation of $\alphS$-deformed 
link invariants. By convention, we indicate the 
homological degree using powers of $t$.

\begin{lemman}\label{lemma:crossing-color}
The complex associated to a link in 
$\Komh(\FoamSH)$ is isomorphic to a complex 
with trivial differentials, e.g. for $a\geq b$ we have locally:
\begin{equation}\label{eq:crossing-color-generic}
\left\llbracket\vphantom{\int^{\substack{1\\ 1\\1\\1}}}\right.\hspace*{-.15cm}
\xy
(0,0)*{\includegraphics[scale=1.15]{figs/fig-2-4}};
\endxy
\hspace*{-.15cm}
\left.\vphantom{\int^{\substack{1\\ 1\\1\\1}}}\right\rrbracket^{\alphS}
\cong
\bigoplus_{\wcolor{A},\wcolor{B}\subset\alphS}
\xy
(0,0)*{\includegraphics[scale=1.15]{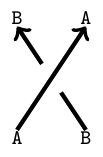}};
\endxy
\cong 
\bigoplus_{k=0,\dots,b}
\bigoplus_{
\substack{\wcolor{A},\wcolor{B}\subset\alphS, \\\ |\wcolor{B}\setminus\wcolor{A}|=k}
}\!\!
t^{k}
\xy
(0,0)*{\includegraphics[scale=1.15]{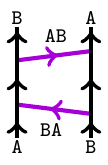}};
\endxy
\hspace{3.1cm}
\raisebox{-.6cm}{\makeqed}
\hspace{-3.1cm}
\end{equation}
\end{lemman}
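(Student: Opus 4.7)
The plan is to apply the idempotent decomposition of \fullref{lemma:idem2} to each chain group of the complex $\HgenS{\text{crossing}}$ defined in \eqref{eq:crossingcx}, and then use Gaussian elimination in $\Komh(\FoamSH)$ to cancel all contributing differentials.

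First I would decompose each web $W_k$ appearing as a chain group in \eqref{eq:crossingcx} into its idempotent-colored summands in $\FoamSH$, as illustrated by the local decomposition \eqref{eq:local-idems}. Admissibility (\fullref{lemma:admissibility}) forces these summands to be indexed by pairs $(\wcolor{A},\wcolor{B})$ of subsets of $\alphS$ labeling the two outer strands of the crossing, together with a choice of a compatible idempotent coloring of the internal structure. Crucially, the idempotents on the two outer strands are preserved by every foam generator, because the complex $\HgenS{\text{crossing}}$ agrees with the cylinder over the outer strands in a neighborhood of the boundary. Hence the whole complex splits as a direct sum of subcomplexes, one for each pair $(\wcolor{A},\wcolor{B})$ with $|\wcolor{A}|=a,|\wcolor{B}|=b$.

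Next I would fix such a pair $(\wcolor{A},\wcolor{B})$ and compute the matrix of each differential $d_k^+$ (c.f.\ \fullref{fig:differential}) in this idempotent-colored basis. Using the pitchfork and MP relations (\eqref{eq:MP-flow}, \eqref{eq:idem-pitchfork}, \eqref{eq:idem-pitchfork2}, \eqref{eq:idem-pitchfork3}) together with the blister and annulus evaluations of \fullref{lemma:scalar-the-first}, \fullref{lemma:scalar-the-third}, and \fullref{lemma:saddle-reverse}, every nonzero matrix entry turns out to be a unit scalar in $\C$ times an identity morphism between isomorphic idempotent-colored webs. This reduces the $(\wcolor{A},\wcolor{B})$-subcomplex to a combinatorial model: a Koszul-like complex whose chain groups in homological degree $j$ are indexed by $j$-element subsets of $\wcolor{A}\cap\wcolor{B}$ (the elements which can still be transferred between the two outer strands), with all nontrivial differentials being invertible.

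I would then repeatedly apply Gaussian elimination to cancel all such invertible components. Set $k=|\wcolor{B}\setminus\wcolor{A}|$; the surviving term sits in homological degree $k$, because within this subcomplex the element count forces exactly $k$ uncancellable strands, matching the homological shift $t^k$ and the $q$-shift appearing in \eqref{eq:crossing-color-generic}. The resulting web is the idempotent-colored merge-split with labels $\wcolor{A}$, $\wcolor{B}$ and $\wcolor{A}\cup\wcolor{B}$ on the top edge, which is admissible by \fullref{lemma:admissibility} precisely when $\wcolor{A}\cap\wcolor{B}$ has the right cardinality; the terms with $|\wcolor{A}\cup\wcolor{B}|\neq a+b$ get cancelled completely. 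Re-indexing the direct sum by $k$ then yields the second isomorphism in \eqref{eq:crossing-color-generic}.

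The main obstacle is the bookkeeping for the Gaussian elimination: one has to track the exact unit scalars (products of terms $\scalar{X,Y}$) so as to verify that each cancellation produces an honest isomorphism of complexes, and one must check that the residual homological degree for the $(\wcolor{A},\wcolor{B})$-subcomplex is indeed $k=|\wcolor{B}\setminus \wcolor{A}|$ and not some other combinatorial quantity. Once the local model is correctly identified as a tensor product of two-term contractible complexes indexed by the elements of $\wcolor{A}\cap\wcolor{B}$ (with a fixed shift recording $\wcolor{A}\setminus\wcolor{B}$ and $\wcolor{B}\setminus \wcolor{A}$), the elimination becomes routine. The statement for $b\geq a$ follows by applying the mirror symmetry used in \fullref{definition:basic-complexes}.
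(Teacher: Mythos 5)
Your overall strategy --- split the crossing complex into idempotent-colored summands in $\FoamSH$ and then cancel invertible differential components --- is the natural one, but note that it amounts to re-deriving the input which the paper simply quotes: the paper's proof consists of citing \cite[Lemma 5.9]{RW} for the left isomorphism and then applying \eqref{eq:local-idems} to \eqref{eq:crossingcx} for the right one. Re-proving that cited lemma is legitimate, but as written your sketch has concrete gaps. First, the splitting you start from is not indexed by pairs $(\wcolor{A},\wcolor{B})$. Since the differentials are cylindrical near the boundary, what they preserve are the idempotents at the \emph{four} boundary edge germs; within a single resolution web the two germs belonging to one strand of the crossing need not carry the same idempotent. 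Already for $a=b=1$ the merge-split resolution admits the admissible coloring whose top pair equals the bottom pair $(\wcolor{A},\wcolor{B})$ rather than the reversed pair, so the complex splits over quadruples of boundary colorings, and the substance of the lemma is precisely that the ``non-diagonal'' quadruples give contractible subcomplexes while the diagonal coloring $(\wcolor{A},\wcolor{B};\wcolor{B},\wcolor{A})$ supports the surviving summand.

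Second, your combinatorial model of these subcomplexes is off. Admissibility at the merge vertices forbids transferring elements of $\wcolor{A}\cap\wcolor{B}$ between the two strands: the rungs can only carry subsets of $\wcolor{B}\setminus\wcolor{A}$ (respectively $\wcolor{A}\setminus\wcolor{B}$), not of $\wcolor{A}\cap\wcolor{B}$. In fact, for the diagonal boundary coloring admissibility forces a \emph{unique} colored summand (the rungs must carry exactly $\wcolor{B}\setminus\wcolor{A}$ and $\wcolor{A}\setminus\wcolor{B}$), and it sits in homological degree $k=|\wcolor{B}\setminus\wcolor{A}|$; no Gaussian elimination is needed there at all. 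The elimination is only needed to kill the non-diagonal subcomplexes, where one must check that the relevant colored zip/unzip components are invertible (the relations you list are indeed the right toolkit for that). Finally, describing the local model as ``a tensor product of two-term contractible complexes indexed by the elements of $\wcolor{A}\cap\wcolor{B}$'' with one surviving term is internally inconsistent: a tensor product of contractible complexes is contractible, so nothing would survive. With the index set corrected (boundary quadruples; transfers governed by $\wcolor{B}\setminus\wcolor{A}$), your cancellation argument can be carried through, but as stated the key verification step would not go through.
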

Here and in the following, we 
display a tangle diagram $T_D$ with strands colored by idempotents
as a shorthand notation for the corresponding idempotent-colored 
subcomplex of $\HgenS{T_D}$ in $\Komh(\FoamSH)$. We also write $\wcolor{XY}=\wcolor{X}\setminus\wcolor{Y}$ for any two subsets 
$\wcolor{X},\wcolor{Y}\subset \alphS$. If a web edge is colored 
with $\wcolor{XY}=\emptyset$, then it is to be deleted from the diagram. The complexes 
associated to other crossings split analogously, with negative 
crossings receiving negative shifts in homological degree.

\begin{proof}
The left isomorphism is a special case of \cite[Lemma 5.9]{RW}, the right 
isomorphism follows by applying \eqref{eq:local-idems} to \eqref{eq:crossingcx}.
\end{proof}

In the case of $\alphS=\{1,-1\}$ and $a=b=1$, the decomposition \eqref{eq:crossing-color-generic} precisely recovers Blanchet's decomposition in \cite[Figure 17]{Bla}.

\subsection{Simple resolutions and Reidemeister foams}\label{subsec:simple-complexes}
In this section we study the complexes 
associated to link diagrams in the $\alphS$-deformation and the homotopy equivalences 
between them, which are induced by 
Reidemeister moves. This will also lead us to 
determine a particular scaling for the Reidemeister 
homotopy equivalences in $\Komh(\foam{N})$ that is necessary 
for the functoriality proof in \fullref{sec:functorial}.

As a first step, we apply 
\fullref{lemma:crossing-color} to 
all crossings in the link diagrams appearing in Reidemeister moves, and 
we immediately obtain:

\begin{lemman}\label{lemma:thecomplexes}
The following isomorphisms hold in $\Komh(\FoamSH)$.
\begin{gather*}
\HgenS{
\xy
(0,0)*{\includegraphics[scale=1.15]{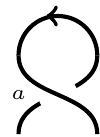}};
\endxy
\;
	}
\cong \bigoplus_{
\substack{\wcolor{A}\subset\alphS \\ |\wcolor{A}| =a}
}
t^{0}
\xy
(0,0)*{\includegraphics[scale=1.15]{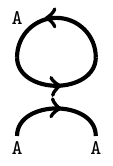}};
\endxy
\quad,\quad
\HgenS{
\xy
(0,0)*{\includegraphics[scale=1.15]{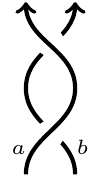}};
\endxy
}
\stackrel{\raisebox{.05cm}{\tiny $a{\geq}b$}}{\cong}
\!\!\bigoplus_{
\substack{\wcolor{A},\wcolor{B}\subset\alphS, \\ |\wcolor{A}|=a, |\wcolor{B}|=b}
}
\!\!\!\!
t^0
\xy
(0,0)*{\includegraphics[scale=1.15]{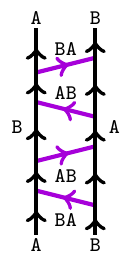}};
\endxy\\
\HgenS{
\xy
(0,0)*{\includegraphics[scale=1.15]{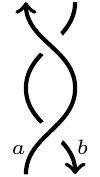}};
\endxy
}
\stackrel{\raisebox{.05cm}{\tiny $a{\geq}b$}}{\cong}
\!\!\bigoplus_{
\substack{\wcolor{A},\wcolor{B}\subset\alphS, \\ |\wcolor{A}|=a, |\wcolor{B}|=b}
}\!\!
t^0
\xy
(0,0)*{\includegraphics[scale=1.15]{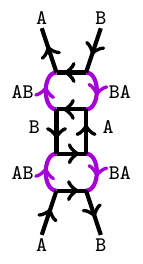}};
\endxy
\quad,\quad
\HgenS{
\xy
(0,0)*{\includegraphics[scale=1.15]{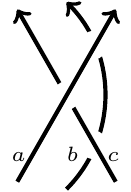}};
\endxy
}\!\!\!
\stackrel{\raisebox{.05cm}{\tiny $a{\geq}b{\geq}c$}}{\cong}\!\!\!\!\!\!\!\!\!
\bigoplus_{\substack{
\wcolor{A},\wcolor{B},\wcolor{C}\subset\alphS, \\ |\wcolor{A}|=a, |\wcolor{B}|=b,|\wcolor{C}|=c \\
k = |\wcolor{\BA}| + |\wcolor{\CA}| - |\wcolor{\CB}|
}
} \!\!\!t^{k} \! 
\xy
(0,0)*{\includegraphics[scale=1.15]{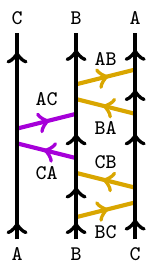}};
\endxy
\end{gather*}
The $\alphS$-deformed 
complexes associated to link diagrams appearing in other 
versions of Reidemeister moves have analogous simplifications.\qedmake
\end{lemman}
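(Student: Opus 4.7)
The plan is to apply \fullref{lemma:crossing-color} locally at each crossing appearing on the left-hand sides and then exploit the admissibility condition from \fullref{lemma:admissibility} (propagated through every merge and split vertex of the resulting idempotent-colored webs) to determine which summands survive and how the homological shifts aggregate.

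For the first case, the diagram is a single $a$-colored strand with no crossings, so the claim is nothing more than the idempotent decomposition of the identity foam from \fullref{lemma:idem2}: the identity splits as a sum over $a$-element subsets $\wcolor{A}\subset\alphS$ of the primitive idempotents, giving one idempotent-colored strand for each such $\wcolor{A}$, all concentrated in homological degree zero. For the second and third cases, which contain a single crossing each, I would apply \fullref{lemma:crossing-color} once. The resulting summation over pairs of subsets $\wcolor{A},\wcolor{B}$ with the prescribed sizes reproduces the direct sum displayed on the right, with the idempotent-colored resolution web and the homological shift on each summand read off directly from the positive or negative version of \fullref{lemma:crossing-color} as appropriate.

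The fourth case, with three strands and three crossings, is the main step. Here I would apply \fullref{lemma:crossing-color} to each of the three crossings, obtaining a triple direct sum indexed by six idempotent colorings (two per crossing). Admissibility at each merge/split vertex then forces colorings on segments belonging to the same strand to coincide, collapsing these six indices down to the three subsets $\wcolor{A},\wcolor{B},\wcolor{C}$ attached to the input strands. The homological shift on each surviving summand is the sum of the three individual crossing shifts; tracking the signs according to the over/under structure fixed by $a\geq b\geq c$ then yields the announced formula $k = |\wcolor{BA}| + |\wcolor{CA}| - |\wcolor{CB}|$, where the positive contributions come from the two crossings in which the largest-labeled strand passes over, and the negative contribution comes from the remaining crossing whose orientation is reversed relative to these.

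The main obstacle will be the bookkeeping in the fourth case: carrying out the admissibility-driven matching of idempotents across the three crossings, verifying that the surviving web in the middle of the braid is exactly the one displayed on the right, and computing the signs of the contributions $|\wcolor{BA}|$, $|\wcolor{CA}|$, and $-|\wcolor{CB}|$ correctly from the sign conventions for the shifts in \fullref{lemma:crossing-color}. This is a purely local combinatorial check, but it requires careful attention to the over/under data of the braid and to which pair $(\wcolor{X},\wcolor{Y})$ plays the role of $(\wcolor{A},\wcolor{B})$ at each of the three crossings. The final sentence, asserting analogous simplifications for other Reidemeister versions, is then immediate by applying the same two-step procedure after swapping the roles of positive and negative crossings and reflecting diagrams as needed.
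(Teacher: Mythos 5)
Your proposal follows essentially the same route as the paper: there the lemma is obtained immediately by applying \fullref{lemma:crossing-color} to every crossing in the displayed diagrams (with the Karoubi splitting of \fullref{lemma:idem2} covering the crossingless strand), exactly as you do, and the matching of idempotent colors along shared strand segments together with the summation of homological shifts is the same routine bookkeeping you describe. The only caveat is your heuristic for the minus sign in $k=|\wcolor{\BA}|+|\wcolor{\CA}|-|\wcolor{\CB}|$: it is accounted for by the convention that negative crossings receive negative homological shifts (stated right after \fullref{lemma:crossing-color}), i.e. by the sign of the relevant crossing in the displayed variant, not by which strand passes over or by an orientation reversal.
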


\begin{lemma}\label{lemma:thecomplexes2}
Under the direct sum decompositions of complexes 
from \fullref{lemma:thecomplexes}, the 
homotopy equivalences associated to Reidemeister 
moves are given by diagonal matrices whose non-zero 
entries are invertible, 
idempotent-colored foams in $\FoamS{N}$. The analogous result also holds for 
all possible colorings and orientations of 
Reidemeister moves, which do not appear 
in \fullref{lemma:thecomplexes}.
\end{lemma}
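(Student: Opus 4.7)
The strategy rests on two structural features of the $\alphS$-deformed setting. First, after the decompositions in \fullref{lemma:thecomplexes}, both sides of any Reidemeister move are expressed as direct sums indexed by the idempotent colorings of the boundary strands of the Reidemeister disk, which are shared between the two sides. Second, morphisms in $\FoamSH$ between idempotent-colored webs preserve the boundary idempotent data by \fullref{lemma:admissibility}, and the decomposed complexes have trivial differentials.

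First I would set up the boundary-coloring bijection: each summand on either side of the Reidemeister move is labeled by a consistent choice of idempotents on the strands meeting the boundary of the Reidemeister disk, and this labeling is the same for both sides. Since all differentials vanish in the decomposed complexes, any chain map is nothing more than a collection of foam morphisms between individual summands. The preservation of boundary idempotents forces this collection to be diagonal with respect to the boundary-coloring index, with entries between mismatched colorings automatically zero.

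Next I would check that, for each boundary coloring $c$, the matched summands on the two sides are isomorphic as $q$-graded objects in $\FoamSH$, and that they sit in the same homological degree. For R1 and R2 the internal idempotent-colored webs collapse, via \eqref{eq:local-idems} together with \fullref{lemma:scalar-the-third}, to the identity web on the given boundary colors, and both complexes live entirely in homological degree $0$. For R3 one uses the MP-relations \eqref{eq:MP-flow} and the pitchfork relations \eqref{eq:idem-pitchfork}--\eqref{eq:idem-pitchfork3} to identify the idempotent-colored triangular webs on both sides with the same trivially-colored web, and a direct combinatorial check shows that the shift exponent $k=|\wcolor{\BA}|+|\wcolor{\CA}|-|\wcolor{\CB}|$ from \fullref{lemma:thecomplexes} depends only on the boundary coloring, hence agrees for paired summands. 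By \fullref{rem:gradhom}, the degree-zero endomorphism space of each such object is one-dimensional, spanned by the identity foam, so the matched summands are isomorphic in $\FoamSH$ via a canonical invertible foam.

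To conclude, the Reidemeister homotopy equivalences exist by \fullref{theorem:wulink2}, and since they connect complexes with trivial differentials they are in fact honest isomorphisms of graded objects. Combined with the one-dimensionality above, each diagonal entry is a non-zero scalar multiple of the identification isomorphism and is therefore an invertible idempotent-colored foam in $\FoamS{N}$. The remaining orientations and crossing variants of the Reidemeister moves not covered by \fullref{lemma:thecomplexes} are handled by symmetry and by the orientation-reversal compatibility of the relations in \fullref{subsec:special}. The main obstacle in this plan is the matching step for R3, where one must simultaneously control the boundary colorings, the homological shifts, and the internal web simplifications; extracting the correct isomorphism between idempotent-colored R3-webs using the foam relations is the most delicate part of the argument.
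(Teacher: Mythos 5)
Your argument is correct and is essentially the paper's proof: the paper likewise deduces diagonality from the fact that any non-zero foam between colored webs preserves the idempotent-colors on boundary edges, and invertibility from the tangle invariance of $\HgenS{\cdot}$ (\fullref{theorem:wulink2}) together with the vanishing of the differentials in the decomposed complexes. Your middle paragraph (identifying matched summands via \eqref{eq:MP-flow}--\eqref{eq:idem-pitchfork3} and invoking one-dimensionality of endomorphism spaces through \fullref{rem:gradhom}, which strictly speaking concerns the undeformed graded morphism spaces) is superfluous: since the homotopy inverse is diagonal for the same reason and all homotopies vanish, the two diagonal matrices compose to honest identities, which already forces every diagonal entry to be invertible.
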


\begin{proof}
This follows from the facts that $\HgenS{\cdot}$ is a tangle invariant in 
$\Komh(\FoamSH)$ by \fullref{theorem:wulink2}, 
and that any non-zero 
foam between colored webs preserves the idempotent-colors on the boundary edges of such webs.
\end{proof}

For the following, we choose (once and for all) an 
ordering of $\alphS=\{\lambda_1,\dots \lambda_N\}$.

\begin{definition}\label{definition:nice-colors}
Our favourite idempotent-coloring of a tangle 
diagram $T_D$ is given by coloring every $a$-labeled 
strand with the idempotent $\idem{A}$ for
$\wcolor{A}=\{\lambda_1,\dots,\lambda_a\} \subset\alphS$. 
\fullref{lemma:crossing-color} implies 
that the subcomplex of $\HgenS{T_D}$ corresponding to 
this favourite idempotent-coloring simplifies
to a single colored web of minimal 
combinatorial complexity, which we will
call the \textit{simple resolution} of $T_D$. 
\end{definition}

\begin{example}\label{example:thecomplexes}
The simple resolutions of the 
diagrams from \fullref{lemma:thecomplexes} are obtained as follows.
\begin{gather*}
\xy
(0,.9)*{\includegraphics[scale=1.15]{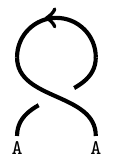}};
\endxy
\longmapsto 
\xy
(0,0)*{\includegraphics[scale=1.15]{figs/fig-2-15}};
\endxy
\quad,\quad
\xy
(0,0)*{\includegraphics[scale=1.15]{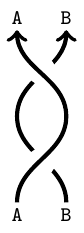}};
\endxy
\stackrel{\raisebox{.05cm}{\tiny $a{\geq}b$}}{\longmapsto}
\xy
(0,0)*{\includegraphics[scale=1.15]{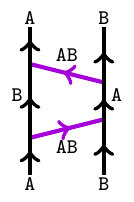}};
\endxy
\quad,\quad
\xy
(0,0)*{\includegraphics[scale=1.15]{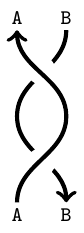}};
\endxy
\stackrel{\raisebox{.05cm}{\tiny $a{\geq}b$}}{\longmapsto}
\xy
(0,0)*{\includegraphics[scale=1.15]{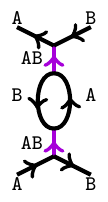}};
\endxy\\
\phantom{.}
\xy
(0,0)*{\includegraphics[scale=1.15]{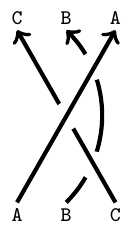}};
\endxy
\stackrel{\raisebox{.05cm}{\tiny $a{\geq}b{\geq}c$}}{\longmapsto}
\xy
(0,0)*{\includegraphics[scale=1.15]{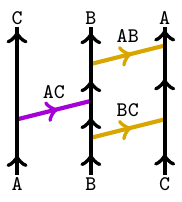}};
\endxy
\quad,\quad
\xy
(0,0)*{\includegraphics[scale=1.15]{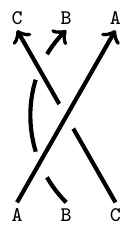}};
\endxy
\stackrel{\raisebox{.05cm}{\tiny $a{\geq}b{\geq}c$}}{\longmapsto}
\xy
(0,0)*{\includegraphics[scale=1.15]{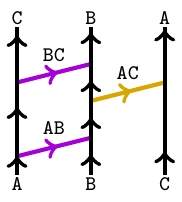}};
\endxy
\end{gather*} 
Here and in the following there are no 
homological degree shifts. Link diagrams which appear in other 
oriented versions of Reidemeister moves 
have similar simple resolutions. We will discuss 
this variety in more detail below.
\end{example}

\begin{lemman}\label{lemma:induced-RM}
The Reidemeister $1$ and $2$ homotopy equivalences in $\Komh(\FoamSH)$ are 
realized on simple resolutions by complex 
multiples of the foams in \fullref{fig:RMcomplexes}.
\smallskip
\begin{enumerate}

\setlength\itemsep{.15cm}

\item[(R1)] If a Reidemeister $1$ move in a strand 
of label $a$ increases the writhe of the 
link diagram, then the corresponding foams between simple resolutions has to 
be rescaled by $\scalar{A}$. For example:
\[
\xy
(0,0)*{\includegraphics[scale=1.15]{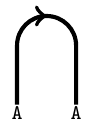}};
\endxy
\xrightarrow{\scalar{A} F_1}
\xy
(0,0)*{\includegraphics[scale=1.15]{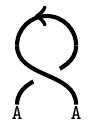}};
\endxy
\xrightarrow{G_1}
\xy
(0,0)*{\includegraphics[scale=1.15]{figs/fig-2-31}};
\endxy
\quad,\quad
\xy
(0,0)*{\includegraphics[scale=1.15]{figs/fig-2-31}};
\endxy
\xrightarrow{F_1}
\xy
(0,0)*{\includegraphics[scale=1.15]{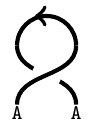}};
\endxy
\xrightarrow{\scalar{A} G_1}
\xy
(0,0)*{\includegraphics[scale=1.15]{figs/fig-2-31}};
\endxy 
\]

\item[(R2+)] For Reidemeister 
$2$ moves between strands of labels $a,b$ with parallel orientation, 
the foams $F_2^+$ and $G_2^+$ are 
normalized by a sign 
$\epsilon=(-1)^{\min(a,b)(a-b)}$ depending on
which strand is pushed over. For example:
\[
\xy
(0,0)*{\includegraphics[scale=1.15]{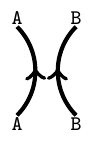}};
\endxy
\xrightarrow{\epsilon F^+_2}
\xy
(0,0)*{\includegraphics[scale=1.15]{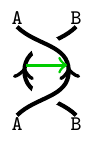}};
\endxy
\xrightarrow{G^+_2}
\xy
(0,0)*{\includegraphics[scale=1.15]{figs/fig-2-34}};
\endxy
\quad,\quad
\xy
(0,0)*{\includegraphics[scale=1.15]{figs/fig-2-34}};
\endxy
\xrightarrow{F^+_2}
\xy
(0,0)*{\includegraphics[scale=1.15]{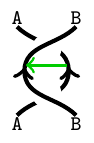}};
\endxy
\xrightarrow{\epsilon G^+_2}
\xy
(0,0)*{\includegraphics[scale=1.15]{figs/fig-2-34}};
\endxy
\]

\item[(R2--)] For Reidemeister $2$ moves between strands of 
labels $a,b$ and opposite orientations, the foams $F_2^-$ and $G_2^-$ are 
also normalized by $\epsilon$ as follows:
\[
\xy
(0,0)*{\includegraphics[scale=1.15]{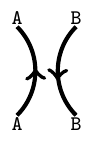}};
\endxy
\xrightarrow{ F^-_2}
\xy
(0,0)*{\includegraphics[scale=1.15]{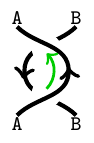}};
\endxy
\xrightarrow{\epsilon G^-_2}
\xy
(0,0)*{\includegraphics[scale=1.15]{figs/fig-2-37}};
\endxy
\quad,\quad
\xy
(0,0)*{\includegraphics[scale=1.15]{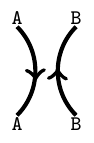}};
\endxy
\xrightarrow{\epsilon F^-_2}
\xy
(0,0)*{\includegraphics[scale=1.15]{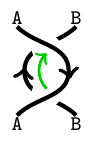}};
\endxy
\xrightarrow{G^-_2}
\xy
(0,0)*{\includegraphics[scale=1.15]{figs/fig-2-39}};
\endxy
\hspace{1.9cm}
\raisebox{-.55cm}{\makeqed}
\hspace{-1.9cm}
\]
\end{enumerate}
\end{lemman}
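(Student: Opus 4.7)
The plan is to reduce the question to a pointwise identification of scalars, using three ingredients already established: \fullref{lemma:thecomplexes2}, which writes the Reidemeister homotopy equivalences as diagonal matrices of invertible foams in $\FoamS{N}$; \fullref{rem:gradhom}, which provides the one-dimensionality of the relevant hom-spaces between simple resolutions; and the foam relations gathered in \fullref{subsec:special}, which let us compute explicit composites.

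First, I would restrict the Reidemeister homotopy equivalence to the summand corresponding to the favorite idempotent-coloring of \fullref{definition:nice-colors}. On this summand the map is a single invertible foam, and by \fullref{rem:gradhom} the relevant morphism spaces between the simple resolutions are one-dimensional in the appropriate $q$-degree. Consequently, each Reidemeister map on the simple resolution is a complex scalar multiple of the corresponding foam depicted in \fullref{fig:RMcomplexes}, and the task reduces to computing these scalars. To pin them down I would use the constraint that the forward and backward Reidemeister maps compose to the identity on the simple resolution, and compute the composites directly using the foam relations in $\FoamS{N}$.

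For (R1), the composite of $F_1$ and $G_1$ simplifies, via the MP and pitchfork relations \eqref{eq:MP-flow} and \eqref{eq:idem-pitchfork3}, to an idempotent-colored bubble on an $\wcolor{A}$-colored strand. By \fullref{lemma:scalar-the-first}, specifically equation \eqref{eq:idem-bubble}, this bubble evaluates to $\scalar{A}$ times the identity foam. To obtain the identity, the direction \emph{increasing the writhe} must therefore be rescaled by $\scalar{A}$, giving the normalization in the statement. For (R2$\pm$), the analogous composite $G_2^\pm \circ F_2^\pm$ reduces, after putting the foam in standard form with \eqref{eq:MP-flow}, \eqref{eq:idem-pitchfork2}, \eqref{eq:idem-pitchfork3}, to the annulus-on-disk configuration of \fullref{lemma:scalar-the-second}, so invoking \eqref{eq:idem-disc} produces the factor $(-1)^{b(a-b)}$. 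Since $a \geq b$ this equals $\epsilon = (-1)^{\min(a,b)(a-b)}$, forcing the asymmetric normalization by $\epsilon$ displayed in the lemma.

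The main obstacle is bookkeeping rather than conceptual: one must carefully track which of the two inverse maps acquires the scalar, keeping the orientations of the seams in $F_1, G_1, F_2^\pm, G_2^\pm$ consistent with those in \fullref{fig:RMcomplexes}, and ensure that the same composite computed on the other composition order (e.g.\ $F_1 \circ G_1$) is consistent up to homotopy on all summands, not just the simple resolution. The remaining oriented variants of the Reidemeister moves not explicitly listed above follow by symmetry: reversing all orientations in a tangle corresponds to reversing orientations on foam seams and reflecting foams in a plane, operations that preserve the foam relations of \fullref{subsec:equi-rel} by \fullref{proposition:QRholds}, so the scalar computations in the remaining cases are formally identical.
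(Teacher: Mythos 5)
Your overall strategy is the same as the paper's: restrict to the simple resolution, use \fullref{rem:gradhom} to see that the components of the Reidemeister homotopy equivalences are determined up to complex scalars, and then evaluate explicit composites with the relations from \fullref{subsec:special}. However, two steps as written do not go through. First, in the (R1) case your computation is inconsistent with your conclusion: the composite of the \emph{undecorated} foams $F_1$ and $G_1$ equals $\scalar{A}^{-1}$ times the identity (the closed $\idem{A}$-colored sphere, respectively the cut neck, contributes $\scalar{A}^{-1}$ by \eqref{eq:idem-bubble}), not $\scalar{A}$ times the identity; if the composite really were $\scalar{A}\cdot\mathrm{id}$, your own argument would force a rescaling by $\scalar{A}^{-1}$. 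More importantly, the constraint that the forward and backward maps compose to the identity only determines the \emph{product} of the two normalizations, so it cannot tell you that the factor $\scalar{A}$ sits on the writhe-increasing direction rather than the other one. The paper pins this down differently: by degree reasons and the requirement of being a component of a chain map in $\Foam{N}$, the writhe-increasing component is forced to carry a decoration coming from a neck-cut \eqref{eq:neckcut}, and this decoration specializes to the scalar $\scalar{A}$ via \eqref{eq:idem-bubble}, while the opposite component is undecorated. You acknowledge this bookkeeping problem in your last paragraph, but you do not supply a mechanism that resolves it.

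Second, you treat (R2+) and (R2--) uniformly through \fullref{lemma:scalar-the-second}. Relation \eqref{eq:idem-disc} is indeed the right tool for (R2+), where the composite contains an $(a{-}b)$-labeled annulus capping off a disc, and there the conclusion already holds in $\Foam{N}$. But for (R2--) the composite $G_2^-\circ F_2^-$ is a composition of saddle foams, not that annulus configuration; its evaluation uses \fullref{lemma:saddle-reverse} (relations \eqref{eq:idem-disc2} and \eqref{eq:saddle-reverse}), which are only valid in $\FoamS{N}$ with the idempotent decorations in place. The distinction---(R2+) foams being mutually inverse up to $\epsilon$ already undeformed, (R2--) foams only after specialization---is part of the content of the lemma and of how the chosen scaling is then transported back to $\Foam{N}$, so it should not be elided. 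With these two corrections (and keeping the convention of where the scalar is placed explicit), your argument essentially reproduces the paper's proof.
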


\begin{figure}[ht]
\begin{gather*}
F_1=
\xy
(0,0)*{\includegraphics[scale=1.15]{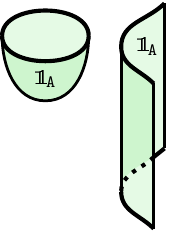}};
\endxy
\quad,\quad
F_2^{+}=
\xy
(0,0)*{\includegraphics[scale=1.15]{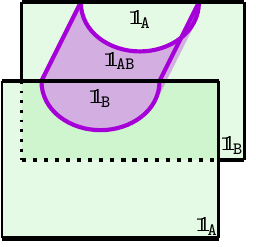}};
\endxy
\quad,\quad
F_2^{-}=
\xy
(0,0)*{\includegraphics[scale=1.15]{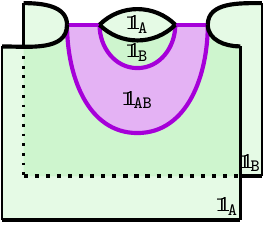}};
\endxy
\\
G_1=
\xy
(0,0)*{\includegraphics[scale=1.15]{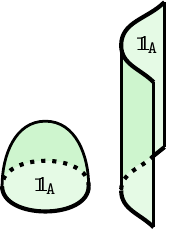}};
\endxy
\quad,\quad
G_2^{+}=
\xy
(0,0)*{\includegraphics[scale=1.15]{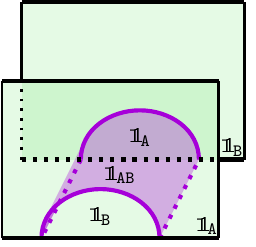}};
\endxy
\quad,\quad
G_2^{-}=
\xy
(0,0)*{\includegraphics[scale=1.15]{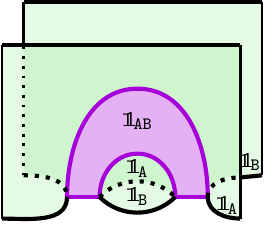}};
\endxy
\end{gather*}
\caption{The (R1), (R2+) and (R2--) foams.
}\label{fig:RMcomplexes}
\end{figure}

\begin{proof}
We first check the case 
of the Reidemeister $2$ moves. 
Using \fullref{rem:gradhom} and 
working in $\Komh(\foam{N})$, it is easy to see that the shown foams 
$F_2^\pm$ and $G_2^\pm$ (without their 
idempotent decoration) are uniquely determined up 
to a complex scalar by their degrees. 
This scalar is non-zero since these foams become 
invertible in $\FoamS{N}$. It follows that 
$F_2^+$ and $G_2^+$ are mutually inverse 
up to a scalar $\epsilon$, which is 
determined by Relation \eqref{eq:idem-disc}. This already holds in $\Foam{N}$. On the contrary,  
$F_2^-$ and $G_2^-$ only become mutually inverse up to 
the same sign $\epsilon$ when considered in $\FoamS{N}$ and decorated by idempotents as shown. This follows from the relations in \fullref{lemma:saddle-reverse}. Now we 
can rescale the Reidemeister $2$ homotopy 
equivalences in $\Foam{N}$ to obtain units as in the statement 
of the lemma.

Regarding Reidemeister $1$ moves, degree 
considerations in $\Foam{N}$ 
using \fullref{rem:gradhom} imply that the relevant 
foams are uniquely determined (up to a non-zero complex scalar) 
in the cases where we claim that no extra scalar appears. In 
the other cases, the foams carry a decoration and are determined 
(up to a unit) by the requirement to be a component of a chain map. 
More specifically, the decoration is the result of a 
neck-cut \eqref{eq:neckcut} and the scalar $\scalar{A}$ results 
from specializing it \eqref{eq:idem-bubble}. It follows that these 
decorated foams give mutual inverses in $\FoamS{N}$, and so we 
choose to scale the Reidemeister $1$ homotopy equivalences 
in $\Foam{N}$ accordingly. 
\end{proof}

Next, we describe the chain homotopy equivalences induced by Reidemeister $3$ moves of type (R3+), whose local model involves tangles with a cyclic sequence of boundary orientations given by three outward pointing arcs followed by three inward pointing arcs, see e.g. \eqref{eg:R3standard}. The remaining Reidemeister 3 moves, of type (R3--), have a boundary orientation sequence alternating between inward and outward pointing and will be dealt with at the end of this section.

\begin{lemma}\label{lemma:induced-RM3}
The homotopy equivalences induced 
by (R3+) moves are 
realized on simple resolutions by 
foams $F^+_3$ and $G^+_3$ 

\begin{equation}\label{eg:R3standard}
\xy
(0,0)*{\includegraphics[scale=1.15]{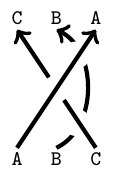}};
\endxy
\xrightarrow{F^+_3}
\xy
(0,0)*{\includegraphics[scale=1.15]{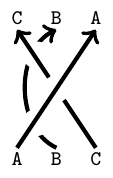}};
\endxy
\xrightarrow{G^+_3}
\xy
(0,0)*{\includegraphics[scale=1.15]{figs/fig-2-47}};
\endxy
\end{equation}
which can be represented by \fcds with 
at most two trivalent vertices. See 
\fullref{fig:RMcomplexes} for illustrations of these foams in the case $a\geq b\geq c$.
\end{lemma}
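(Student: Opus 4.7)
The plan is to reduce the statement to an explicit construction followed by an invertibility check using the foam relations collected in \fullref{subsec:equi-rel} and \ref{subsec:special}. By \fullref{lemma:thecomplexes2}, the homotopy equivalence for an R3+ move restricts to an invertible idempotent-colored foam between the simple resolutions of the two sides of the move. As indicated in \fullref{example:thecomplexes}, each simple resolution is a triangle-shaped web with three external edges carrying a fixed admissible idempotent coloring determined by $\wcolor{A},\wcolor{B},\wcolor{C}$, and the morphism in question has homological degree zero and $q$-degree zero.

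First, I would pin down the target morphism space. Using the bending trick of \fullref{rem:gradhom} and the MOY evaluation of the closed web obtained from the two simple resolutions, one computes the graded $\sym(\Eq)$-rank of the relevant Hom-space. After passing to $\FoamS{N}$ and imposing the admissibility constraints of \fullref{lemma:admissibility}, the degree-zero subspace with prescribed boundary idempotents becomes one-dimensional. This one-dimensionality is the key structural input: any two invertible degree-zero foams between the given simple resolutions must differ by a unit in $\C$.

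Second, I would exhibit explicit candidates $F^+_3$ and $G^+_3$ with the claimed complexity. The natural construction is to glue two singular-vertex pieces of type (III) from \fullref{fig:foamslocally} along a common middle facet. This cobordism has the correct triangle-shaped source and target (up to the orientation conventions in the same figure) and its facets admit a unique admissible idempotent coloring compatible with the boundary. Drawn as a phase diagram relative to the middle facet in the spirit of \fullref{remark:flow-chart}, this foam displays exactly the two trivalent vertices inherited from its two singular vertices.

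Third, I would verify invertibility and identify the candidates with the R3+ homotopy equivalences. Composing $F^+_3 \circ G^+_3$ and $G^+_3 \circ F^+_3$ and reducing via the MP relations \eqref{eq:MP-flow}, the pitchfork relations \eqref{eq:idem-pitchfork}--\eqref{eq:idem-pitchfork3}, and the idempotent scalar relations of \fullref{lemma:scalar-the-first}--\fullref{lemma:saddle-reverse}, one obtains a non-zero scalar multiple of the identity foam on each simple resolution. By the one-dimensionality from step one, the actual R3+ homotopy equivalence provided by \fullref{theorem:wulink2} must coincide with these explicit foams up to a unit, which can be absorbed into the choice of homotopy equivalence (cf.\ \fullref{remark:no-fance-signs-yet}). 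The main obstacle is the bookkeeping in this last step: several facets change idempotent label under the two MP moves, and the scalars produced by \fullref{lemma:scalar-the-third} and \fullref{lemma:saddle-reverse} must be tracked so that the phase diagrams for all three colors of facets remain mutually consistent. The cyclic boundary orientation of R3+ is what allows the two-trivalent-vertex representation; the (R3--) case, with alternating inward/outward boundary orientations, does not admit such a simple model and will be handled separately.
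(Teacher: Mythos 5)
Your proposal follows essentially the same route as the paper's proof: restrict to simple resolutions via the invertibility statement of \fullref{lemma:thecomplexes2}, use degree/rank considerations from \fullref{rem:gradhom} to pin down the foam up to a unit as the lowest-complexity foam with two trivalent vertices in its \fcd\!\!, then verify that $F_3^+$ and $G_3^+$ are mutually inverse in $\FoamS{N}$ using the MP, pitchfork and scalar relations, with the resulting unit absorbed into the chosen scaling of the homotopy equivalence. The only cosmetic difference is that you phrase part of the dimension count after specialization (where the grading is only a filtration), whereas the paper performs the degree argument in $\Foam{N}$; this does not change the substance of the argument.
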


\begin{figure}[ht]
\begin{gather*}
F_3^{+}=
\xy
(0,0)*{\includegraphics[scale=1.15]{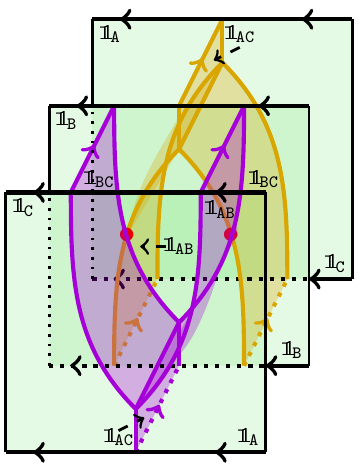}};
\endxy
\quad,\quad
G_3^{+}=
\xy
(0,0)*{\includegraphics[scale=1.15]{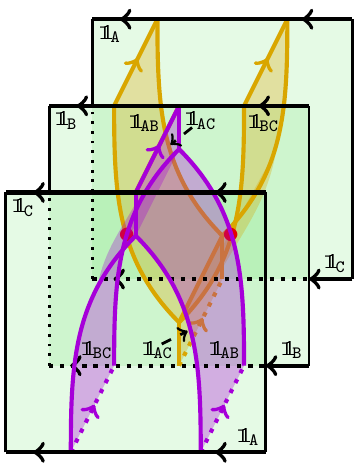}};
\endxy
\end{gather*}
\caption{Examples of (R3+) foams.}\label{fig:RM3standard}
\end{figure}

\begin{proof}
As in the proof of \fullref{lemma:induced-RM}, 
we deduce that Reidemeister $3$ homotopy equivalences 
restrict to invertible, idempotent-colored foams on simple 
resolutions. Degree considerations in $\Foam{N}$ determine 
these (up to a unit) to be foams that admit 
\fcds with at most two trivalent vertices. Note that 
they are the foams of lowest combinatorial complexity 
between these simple resolutions. Below we display \fcds 
for three labeling patterns and check that they represent 
mutually inverse foams in $\FoamS{N}$. As before, this 
determines our preferred scaling of the Reidemeister $3$ homotopy 
equivalences in $\Foam{N}$. 

For $a\geq b\geq c$, the composition $G_3^{+}\circ F_3^{+}$ simplifies to the identity foam as follows.
\[
\xy
(0,0)*{\includegraphics[scale=1.15]{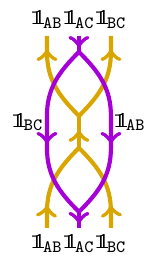}};
\endxy
=
\scalar{\AB,\BC}^{-1}
\xy
(0,0)*{\includegraphics[scale=1.15]{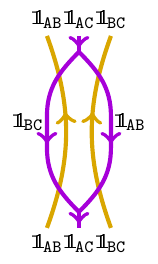}};
\endxy
=
\scalar{\BC,\AB}
\xy
(0,0)*{\includegraphics[scale=1.15]{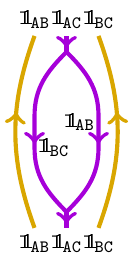}};
\endxy
=
\xy
(0,0)*{\includegraphics[scale=1.15]{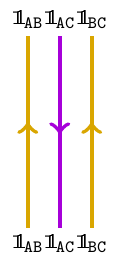}};
\endxy
\]
Here we have 
applied \eqref{eq:idem-KLR2} and \eqref{eq:idem-KLR}. In the case $a\geq c\geq b$ the foam $G_3^{+}\circ F_3^{+}$ simplifies as:
\begin{gather*}
\xy
(0,0)*{\includegraphics[scale=1.15]{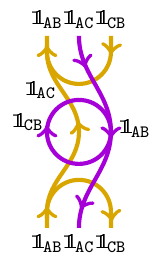}};
\endxy
=
\xy
(0,0)*{\includegraphics[scale=1.15]{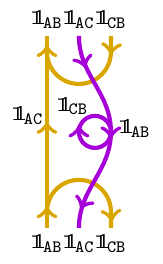}};
\endxy
=
(-1)^{b(c-b)}\!\!\!\!
\xy
(0,0)*{\includegraphics[scale=1.15]{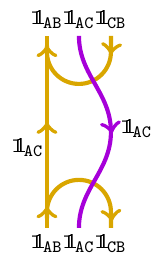}};
\endxy
=
\scalar{\wcolor{AC},\wcolor{CB}}
\xy
(0,0)*{\includegraphics[scale=1.15]{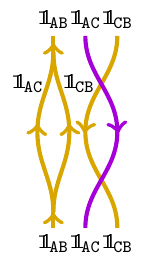}};
\endxy
=
\xy
(0,0)*{\includegraphics[scale=1.15]{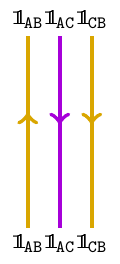}};
\endxy
\end{gather*}
Here we used \eqref{eq:MP-flow}, \eqref{eq:idem-KLR2}, \eqref{eq:idem-disc2} 
and \eqref{eq:saddle-reverse}. In the similar case $b\geq a\geq c$ we get:
\begin{gather*}
\xy
(0,0)*{\includegraphics[scale=1.15]{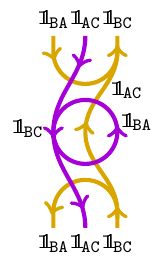}};
\endxy
=
\xy
(0,0)*{\includegraphics[scale=1.15]{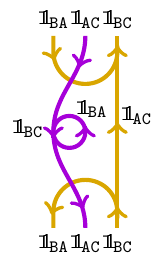}};
\endxy
=
(-1)^{c(b-a)}\!\!\!\!
\xy
(0,0)*{\includegraphics[scale=1.15]{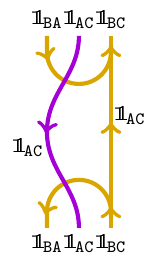}};
\endxy
=
\scalar{BA,AC}
\xy
(0,0)*{\includegraphics[scale=1.15]{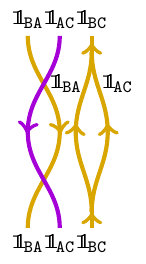}};
\endxy
=
\xy
(0,0)*{\includegraphics[scale=1.15]{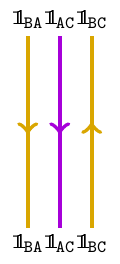}};
\endxy
\end{gather*}
The other compositions $F^+_3\circ G_3^+$ and all other 
cases can be checked to produce identity foams 
in a completely analogous way. Thus, we have shown that 
the (R3+) foams $F^+_3$ and $G^+_3$ 
are mutually inverse foams, which proves the statement.
\end{proof}

For Reidemeister $3$ moves with alternating 
boundary orientations, i.e. type (R3--), we use the following 
composite of (R2--), (R2+) and (R3+):
\begin{gather}\label{eq:R3hard}
\begin{aligned}
&\xy
(0,.75)*{\includegraphics[scale=1.2]{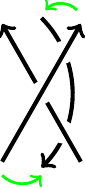}};
\endxy
\,
\leftrightsquigarrow
\,
\xy
(0,0)*{\includegraphics[scale=1.2]{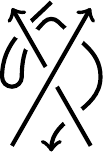}};
\endxy
\,
\leftrightsquigarrow
\,
\xy
(0,0)*{\includegraphics[scale=1.2]{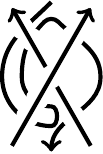}};
\endxy
\,
\leftrightsquigarrow
\,
\xy
(0,0)*{\includegraphics[scale=1.2]{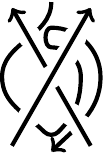}};
\endxy
\,
\leftrightsquigarrow
\,
\xy
(0,0)*{\includegraphics[scale=1.2]{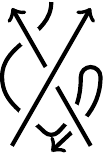}};
\endxy
\,
\leftrightsquigarrow
\,
\xy
(0,0)*{\includegraphics[scale=1.2]{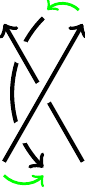}};
\endxy
\end{aligned}
\end{gather}

This is also to be interpreted as a 
template for variations of the (R3--) 
moves with different crossing types than 
the one shown. In all cases, the active 
strand (the one participating in all (R2) moves) 
is chosen to be the first strand encountered on 
the boundary when proceeding in the counterclockwise 
direction, starting from the boundary of the top 
strand. This is indicated by arrows in \eqref{eq:R3hard}. 
We denote the induced composite foams on simple resolutions by $F^-_3$ and $G^-_3$:
\begin{gather*}
\xy
(0,.75)*{\includegraphics[scale=1.15]{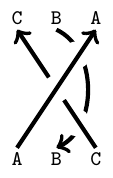}};
\endxy
\xrightarrow{F^-_3}
\xy
(0,0)*{\includegraphics[scale=1.15]{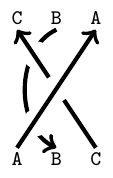}};
\endxy
\xrightarrow{G^-_3}
\xy
(0,0)*{\includegraphics[scale=1.15]{figs/fig-2-71}};
\endxy
\end{gather*}

\begin{example} In the case $a\geq b\geq c$, the foams $F^-_3$ and $G^-_3$ are given by the following compositions reading left-to-right and right-to-left respectively:
\begin{equation}
\label{eq:R3hard2}
\!\!\!
\xy
\xymatrixcolsep{1.12pc}
\xymatrix{
\xy
(0,0)*{\includegraphics[scale=1.15]{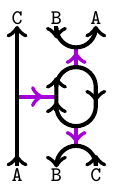}};
\endxy
\ar@<2pt>[r]^{F_2^-}
&
\xy
(0,0)*{\includegraphics[scale=1.15]{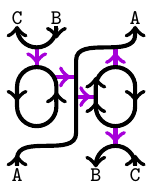}};
\endxy
\ar@<2pt>[r]^{F_2^+}
\ar@<2pt>[l]^{\tau G_2^-}
&
\xy
(0,0)*{\includegraphics[scale=1.15]{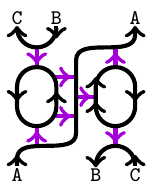}};
\endxy
\ar@<2pt>[r]^{F_3^+}
\ar@<2pt>[l]^{\varepsilon G_2^+}
&
\xy
(0,0)*{\includegraphics[scale=1.15]{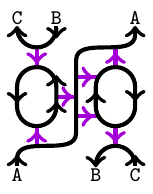}};
\endxy
\ar@<2pt>[r]^{G_2^+}
\ar@<2pt>[l]^{G_3^+}
&
\xy
(0,0)*{\includegraphics[scale=1.15]{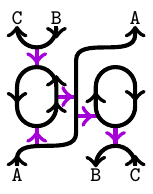}};
\endxy
\ar@<2pt>[r]^/.15cm/{G_2^-}
\ar@<2pt>[l]^{\varepsilon F_2^+}
&
\xy
(0,0)*{\includegraphics[scale=1.15]{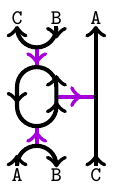}};
\endxy
\ar@<2pt>[l]^/-.15cm/{\tau F_2^-}
}
\endxy
\end{equation}
Here $\varepsilon$ and $\tau$ are signs
coming from our scaling conventions.
\end{example}

As for the other Reidemeister moves, the foams $F^-_3$ and $G^-_3$, which are defined on simple resolutions in $\FoamS{N}$, determine a particular scaling of the (R3--) homotopy equivalences in $\foam{N}$ that we henceforth adopt.

\begin{example}\label{example:this-is-easy!}
In the case where all labels on strands are equal, 
the simple resolutions of the tangle diagrams appearing 
in Reidemeister moves and the foams between them 
are especially simple: (R2+) and (R3+) moves 
take the form of identity foams between identity 
webs, and (R2--) are realized by cup- and cap-saddles. 
In the (R3--) move from \eqref{eq:R3hard2}, the purple $\purplebox$ edges disappear and the 
resulting foam is a \textit{monkey saddle} as in \cite[Figure 9]{BN1}. 
\end{example}

Above, we have determined a 
particular scaling for the Reidemeister move homotopy 
equivalences in $\Komh(\foam{N})$. Even though this process depends on the choice of a particular specialization $\alphS$, 
the result is quite rigid.

\begin{lemma}\label{lemma:integral}
(Integrality.) 
The rescaled Reidemeister move homotopy equivalences are integral.
\end{lemma}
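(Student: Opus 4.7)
My plan is to verify integrality case by case for the five types of Reidemeister moves pinned down in \fullref{lemma:induced-RM}, \fullref{lemma:induced-RM3}, and the paragraph following \eqref{eq:R3hard}. The ambient setting is the integral foam canopolis of \fullref{remark:integrality-1}, whose morphism spaces are free $\symz(\Eq)$-modules, and the goal is to show that each component of the rescaled Reidemeister chain maps is a $\symz(\Eq)$-linear combination of decorated integral foams.

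First, for (R2+), (R3+), and the underlying foams of (R2--), the chain map components are built from the elementary foam generators of \fullref{rem:can-gen} (zip/unzip foams, saddles, singular vertices, and their planar and categorical compositions), all of which are manifestly integral. The only additional data are the sign normalization $\epsilon = (-1)^{\min(a,b)(a-b)}$ for the (R2) moves and the auxiliary signs $\varepsilon, \tau$ appearing in \eqref{eq:R3hard2}; these are all integers and hence preserve integrality.

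The subtle case is (R1). The apparent rescaling by the complex number $\scalar{A}$ in \fullref{lemma:induced-RM} is not a genuine scalar multiplication in $\foam{N}$; rather, as explained in the proof of \fullref{lemma:induced-RM}, the (R1) chain map in $\foam{N}$ carries a decoration produced by the neck-cut \eqref{eq:neckcut}, and the scalar $\scalar{A}$ only emerges after applying $\spec$ and projecting onto the $\idem{A}$-colored summand via \eqref{eq:action-schur} and \eqref{eq:idem-bubble}. Inspecting \eqref{eq:neckcut}, the decoration is a sum over $\alpha \in \partition{a}{N-a}$ with overall coefficient $(-1)^{\binom{a}{2}}$ and with decorations $\schur{\alpha}$ and $\schur{\complementnorm{\alpha}}(\A-\Eq)$ on the two sides of the neck. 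By \fullref{definition:schurdiff} and \fullref{definition:schur-poly}, each such Schur polynomial expands integrally in $\symz(\A \cup \Eq)$, so the whole decoration is integral, and the (R1) chain map belongs to the integral foam category.

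Finally, the (R3--) chain maps are defined in \eqref{eq:R3hard2} as compositions of (R2+), (R2--) and (R3+) homotopy equivalences together with integer sign factors; since composition and planar algebra operations in $\Komh(\foam{N})$ preserve integrality, the (R3--) case reduces to the ones already treated. The main obstacle is the (R1) case: the point to recognize is that the visibly non-integer rescaling by $\scalar{A}$ is the image under $\spec$ of an integral neck-cut decoration, rather than a genuine complex scaling of the chain map back in $\foam{N}$.
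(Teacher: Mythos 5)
There is a genuine gap. The objects the lemma is about are the full rescaled homotopy equivalences in $\Komh(\foam{N})$, and these have many more components than the single foam on the simple resolution that you analyze. For a colored Reidemeister $2$ or $3$ move the degree-zero chain group of the crossing complex is a direct sum of several webs (for labels $a\geq b$ already $b+1$ summands per crossing), so the chain maps $f$ and $g$ are matrices of foams, of which the normalization in \fullref{subsec:simple-complexes} pins down only the entry that survives on the simple resolution after specialization. The remaining entries are not given by any explicit formula in the paper: for colored (R1), (R2--) and (R3--) the existence of the homotopy equivalences in the proof of \fullref{theorem:wulink} is obtained indirectly, following Wu's strategy via fork slides and twists, and the whole map is then rescaled by an a priori complex unit. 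Your assertion that ``the chain map components are built from the elementary foam generators \ldots all of which are manifestly integral'' therefore assumes exactly what the lemma has to prove; your (correct) observations about the integrality of the simple-resolution foams, including the neck-cutting decoration in the (R1) case, only control one matrix entry of each chain map.

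The paper closes this gap by an abstract comparison rather than a case-by-case inspection. It invokes the integral version of \fullref{theorem:wulink} (see \fullref{remark:integrality-2}) to obtain mutually inverse \emph{integral} homotopy equivalences $f^{\prime},g^{\prime}$ for the same Reidemeister move, uses the up-to-scalar uniqueness of such equivalences together with the fact that the rescaled $f,g$ restrict to integral (generating) foams on simple resolutions to write $f^{\prime}=z_1 f$ and $g^{\prime}=z_2 g$ with $z_1,z_2\in\Z$, and then plays off the two homotopy relations $\id-f\circ g=d\circ h+h\circ d$ and $\id-f^{\prime}\circ g^{\prime}=d\circ h^{\prime}+h^{\prime}\circ d$ against each other: since the source complex is not contractible, $z_1z_2=1$, so $z_1=z_2=\pm1$ and $f,g$ are integral. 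If you want to salvage a direct argument along your lines, you would first need explicit integral formulas for \emph{all} components of the colored Reidemeister chain maps, which the paper deliberately avoids producing; some comparison step of the above kind is needed to transfer integrality from the known equivalences in the literature to the particular rescaled ones chosen here.
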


\begin{proof}
Consider a particular Reidemeister move and denote by $f$ and $g$ the mutually inverse rescaled homotopy equivalences, which satisfy
\begin{equation}\label{eq:int-eq-1}
\id - f \circ g =  d \circ h + h \circ d,
\end{equation}
where $d$ is the differential in the domain of $g$, $h$ is a 
homotopy and all of these morphisms are built from foams with 
coefficients in $\C$. By the integral version of 
\fullref{theorem:wulink}, see \fullref{remark:integrality-2}, there 
also exist mutually 
inverse integral homotopy equivalences $f^{\prime}$ and $g^{\prime}$ as well 
as an integral homotopy $h^{\prime}$ such that 
\begin{equation}\label{eq:int-eq-2}
\id- f^{\prime} \circ g^{\prime} =  d \circ h^{\prime} + h^{\prime} \circ d.
\end{equation}
Above we have seen that the foams appearing in $f$ and $g$ 
on simple resolutions are already integral. This implies 
that $f^{\prime}$ and $g^{\prime}$ are integer multiples of $f$ and $g$, respectively.
 
Let $z_1, z_2\in \Z$ be such that $f^{\prime}= z_1 f$ and $g^{\prime} = z_2 g$. Substituting in \eqref{eq:int-eq-2} and subtracting from it a multiple of \eqref{eq:int-eq-1} gives
\[
(1-z_1 z_2) \id = d \circ (h^{\prime}-z_1 z_2 h) + (h^{\prime}-z_1 z_2 h)\circ d.
\] 
Since the domain of $g$ is not a contractible chain complex, 
this implies $z_1z_2=1$. Consequently, $z_1=z_2=\pm 1$ and so 
$f= \pm f^{\prime}$ and $g= \pm g^{\prime}$ are integral.
\end{proof}
	
\section{Functoriality}\label{sec:functorial}
\subsection{The canopolis of tangles and their cobordisms in 4-space}\label{subsec:fourdim}

In \fullref{subsec:homology-stuff} we have 
encountered the planar algebra $\coltangles$ of colored, oriented 
tangles. Now we extend it to a canopolis.

\begin{definitionn}\label{definition:coltangles-can}
Let $\coltangles$ be the canopolis determined 
by the following data. 
\smallskip
\begin{itemize}

\setlength\itemsep{.15cm}

\item The objects are 
given by colored, oriented tangle diagrams in disks $D$ with an 
ordering of the crossings. We regard such diagrams as actual colored, oriented tangles, embedded
in $D\times [0,1]$.

\item The morphisms (besides crossing reordering isomorphisms) are two-dimensional 
colored, oriented cobordisms between tangles, embedded 
in $D\times [0,1]\times [0,1]$. 
These are assumed to be cylindrical in a neighborhood 
of the boundary, with matching boundary orientation on the 
top boundary and opposite one on the bottom. We consider them modulo isotopy 
relative to the boundary.

\item The categorical composition is 
given by gluing cobordisms vertically.

\item The planar algebra 
operation by gluing horizontally and concatenating orders of crossings.\makeqedtri

\end{itemize}
\end{definitionn}

If a cobordism $C$ between tangles is in generic 
position, then the horizontal slices 
$C_z=C\cap (D\times [0,1]\times\{z\})$ give 
tangle diagrams, except for at most finitely 
many $z\in[0,1]$. At such a critical point, the tangle 
diagram $C_{z-\epsilon}$ transforms into the tangle 
diagram $C_{z+\epsilon}$ by a Morse
or a Reidemeister move, 
see \fullref{fig:RMsurfaces} for examples.

\begin{figure}[ht]
\begin{gather*}
\xy
(0,.5)*{\includegraphics[scale=1.15]{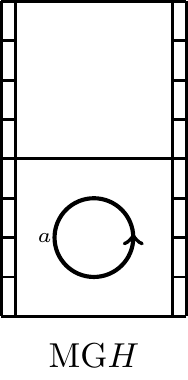}};
\endxy
\;,\;
\xy
(0,0)*{\includegraphics[scale=1.15]{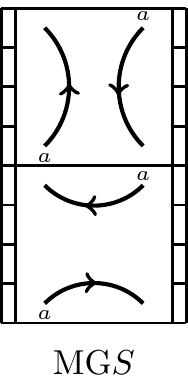}};
\endxy
\;,\;
\xy
(0,-.2)*{\includegraphics[scale=1.15]{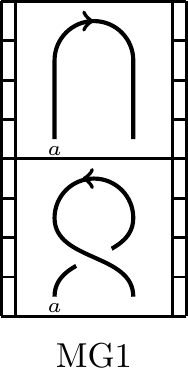}};
\endxy
\;,\;
\xy
(0,-.2)*{\includegraphics[scale=1.15]{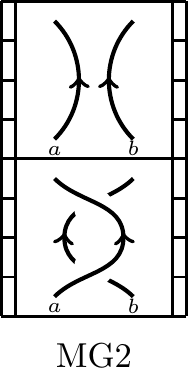}};
\endxy
\;,\;
\xy
(0,-.2)*{\includegraphics[scale=1.15]{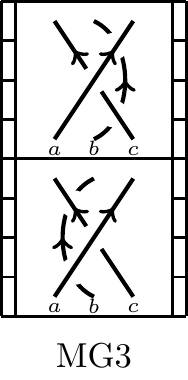}};
\endxy
\end{gather*}
\caption{Examples of movies of generating cobordisms.
}\label{fig:RMsurfaces}
\end{figure}

Between these critical values the 
diagrams $C_{z}$ differ 
only by planar isotopy. As a result, a cobordism $C$ 
in generic position can be 
represented by a 
\textit{movie} of tangle diagrams, whose consecutive 
frames show precisely the transformation of 
horizontal slices across a single critical value. Such movie 
presentations are not unique, but their 
ambiguity can be controlled, as we recall next. For 
this, we use a more rigid version of $\coltangles$ 
in which movie moves represent cobordisms uniquely:

\begin{definition}\label{definition:coltangle-free}
Let $\coltanglesfree$ be the canopolis with 
the same objects and canopolis operations as 
$\coltangles$, but with morphisms given by 
cobordisms rigidly built from Morse and 
Reidemeister type cobordism generators, 
without allowing isotopy.
\end{definition}

By definition, morphisms in $\coltanglesfree$ can be uniquely 
represented by movies of tangle diagrams, whose frames 
differ by a single Morse or Reidemeister type cobordism 
(together with a crossing reordering). The ambiguity 
of this presentation for cobordisms in $\coltangles$, 
where we allow isotopies, is described in the following proposition.

\makeautorefname{figure}{Figures}

\begin{proposition}\label{prop:kernel-mm}
The rigidly built cobordisms in $\coltanglesfree$, which are identified under the projection
$\coltanglesfree \to \coltangles$, are precisely those which can be related by finite sequences of the relations shown and explained in 
\fullref{fig:movie1}, \ref{fig:movie2} 
and \ref{fig:movie3} as well as their 
variations obtained from changing orientations and the height of strands.  
\end{proposition}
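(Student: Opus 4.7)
The plan is to derive this from the classical theorem of Carter--Saito \cite[Chapter 2]{CSbook}, which classifies movie presentations of surface embeddings in $4$-space up to ambient isotopy relative to the boundary. Given an isotopy $H$ between two rigidly-built cobordisms $C_0, C_1 \in \coltanglesfree$, I would first perturb $H$ into generic position, so that it becomes a smooth one-parameter family of generically-embedded surfaces in $D\times[0,1]\times[0,1]$. For all but finitely many times $s\in[0,1]$ the cobordism $H_s$ is itself in generic position with respect to the height function on $[0,1]$ and thus admits a movie presentation. Hence $H$ can be regarded as a \emph{movie of movies}.

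At each of the finitely many exceptional values of $s$, the movie presentation of $H_s$ undergoes an elementary change corresponding to a codimension-one degeneration of the height-function projection. Standard Cerf-theoretic arguments, carried out in detail in \cite[Chapter 2]{CSbook}, classify these degenerations into three families: (i) two critical values of the height function on $H_s$ interchange, producing distant-commutation moves between Morse and/or Reidemeister events; (ii) a critical value passes through the top or bottom of the cobordism, producing reversible Morse or Reidemeister cancellations; (iii) the surface $H_s$ itself develops or resolves a codimension-one singularity of the projection, producing the remaining more intricate moves such as triple-point passages and branch-point cancellations.

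Finally, I would check by direct inspection that each elementary move produced by this classification is represented, up to the orientation, color, and height variations explicitly allowed in the statement, among the moves pictured in \fullref{fig:movie1}, \fullref{fig:movie2} and \fullref{fig:movie3}. Conversely, every listed move is visibly isotopy-invariant and so lies in the kernel of $\coltanglesfree \to \coltangles$; this gives both soundness and completeness of the list.

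The main obstacle is this last step: one has to match our list against the canonical list of \cite[Chapter 2]{CSbook} and verify that the color and orientation variations allowed in the statement cover every pattern arising when strands carry labels in $\Z_{>0}$. Since the underlying local diagrammatics is unchanged in the colored case, the combinatorial list of movie moves is the same as in the uncolored case; only the label bookkeeping is new, and it can be checked move-by-move. A further subtlety is that an isotopy may reorder the critical events along the height axis, so each elementary move carries an implicit crossing-reorder isomorphism; one must verify that the reorder isomorphisms already built into the canopolis structure of $\coltangles$ absorb precisely these ambiguities.
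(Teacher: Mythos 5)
Your proposal is correct and follows essentially the same route as the paper: both reduce the statement to Carter--Saito's movie-move theorem \cite[Chapter 2]{CSbook} and then observe that re-instating the colors and orientations only adds bookkeeping, since the local diagrammatics (and hence the list of moves, up to the allowed variations) is unchanged. Your extra Cerf-theoretic sketch merely unpacks the cited theorem, and your remark about crossing-reordering isomorphisms is consistent with how the paper handles them via the canopolis structure.
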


\makeautorefname{figure}{Figure}

These relations between cobordism movies
are 
colored, oriented versions the 
\textit{movie moves} as 
presented by Carter--Saito in \cite[Chapter 2]{CSbook}, but numbered as in \cite[Section 8]{BN1}. 

\begin{proof} 
By forgetting colors and orientations, any isotopy of cobordisms in 
$\coltangles$ is an isotopy of 
cobordisms as studied in \cite[Chapter 2]{CSbook}. 
Hence, it can be written as a finite sequence of the movie moves therein. 
Remembering the coloring data and the orientation again, we see that 
the original isotopy in $\coltangles$ can be written as a finite 
sequence of the colored, oriented movie moves.
\end{proof}

\begin{figure}[ht]
\begin{gather*}
\xy
(0,.75)*{\includegraphics[scale=1.15]{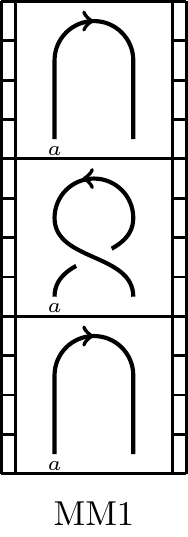}};
\endxy
\;,\;
\xy
(0,0)*{\includegraphics[scale=1.15]{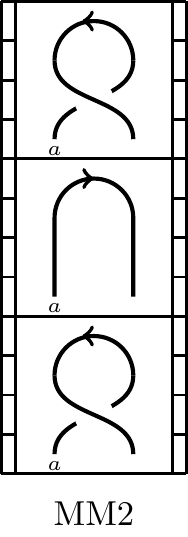}};
\endxy
\;,\;
\xy
(0,0)*{\includegraphics[scale=1.15]{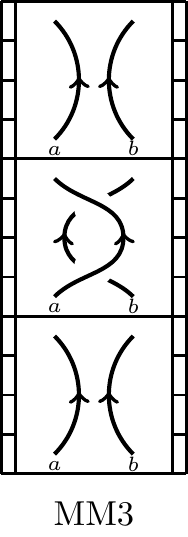}};
\endxy
\;,\;
\xy
(0,0)*{\includegraphics[scale=1.15]{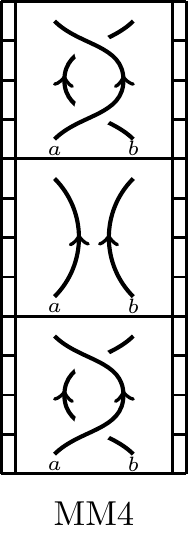}};
\endxy
\;,\;
\xy
(0,0)*{\includegraphics[scale=1.15]{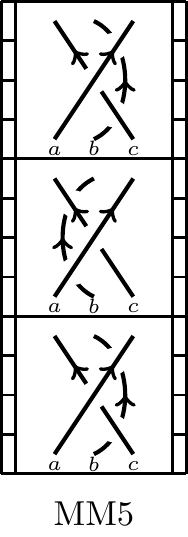}};
\endxy
\end{gather*}
\caption{The reversible 
movie moves, which say that doing and undoing Reidemeister moves is equivalent to doing nothing.
}\label{fig:movie1}
\end{figure}

\begin{figure}[ht]
\begin{gather*}
\xy
(0,.8)*{\includegraphics[scale=1.15]{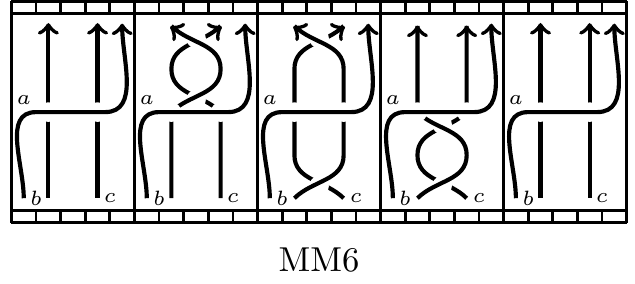}};
\endxy
\;,\;
\xy
(0,0)*{\includegraphics[scale=1.15]{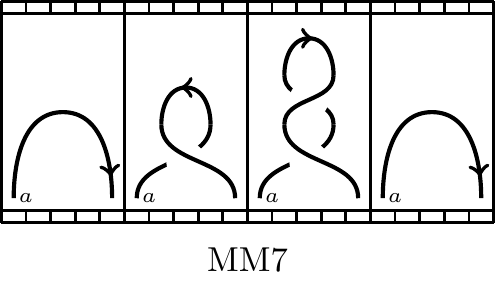}};
\endxy
\\
\xy
(0,.8)*{\includegraphics[scale=1.15]{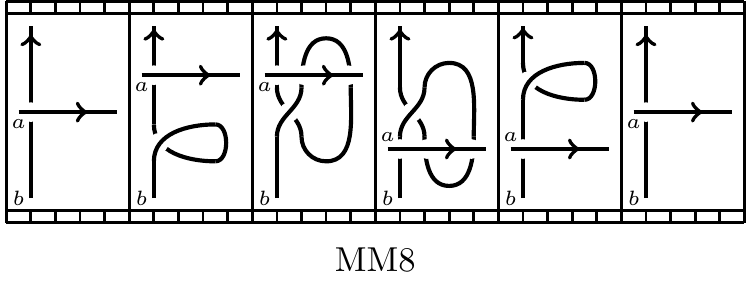}};
\endxy
\;,\;
\xy
(0,0)*{\includegraphics[scale=1.15]{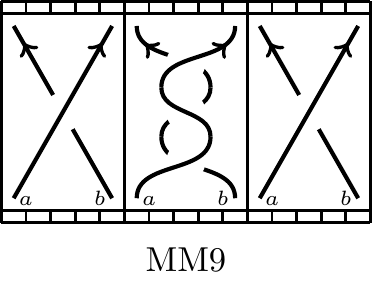}};
\endxy
\\
\xy
(0,0)*{\includegraphics[scale=1.15]{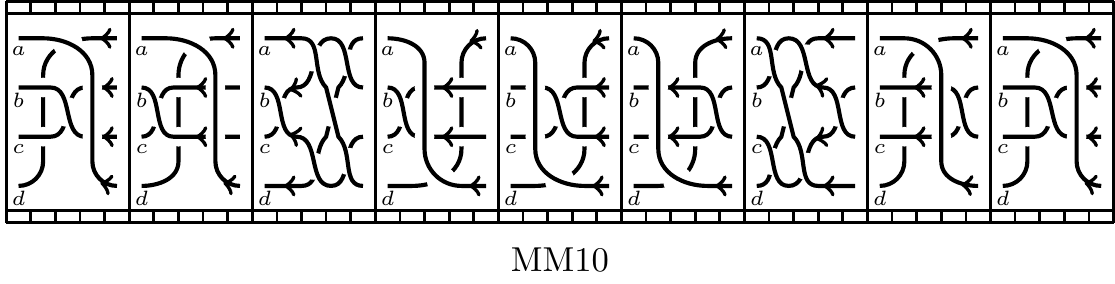}};
\endxy
\end{gather*}
\caption{The reversible movie moves, which show movies equivalent to doing nothing if read left- or rightwards.
}\label{fig:movie2}
\end{figure}

\begin{figure}[ht]
\begin{gather*}
\xy
(0,.8)*{\includegraphics[scale=1.15]{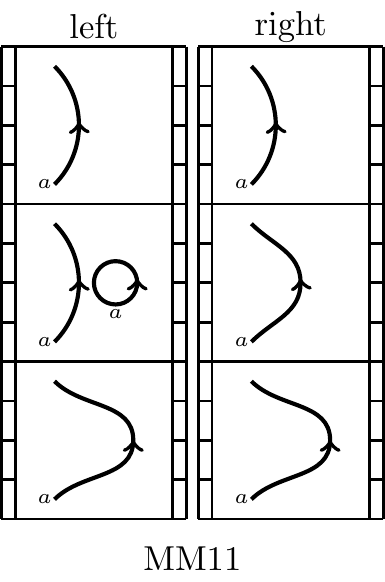}};
\endxy
\,,\,
\xy
(0,0)*{\includegraphics[scale=1.15]{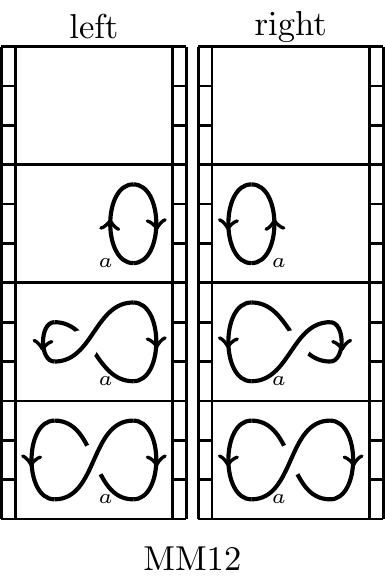}};
\endxy
\,,\,
\xy
(0,0)*{\includegraphics[scale=1.15]{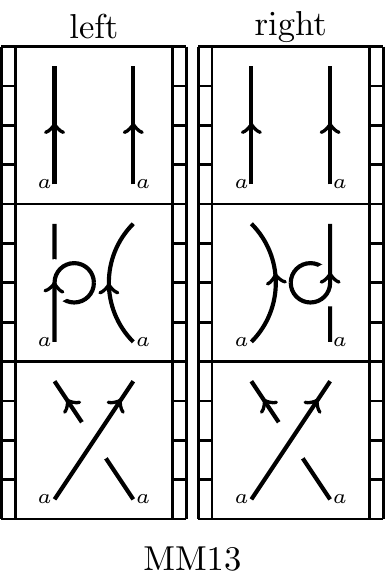}};
\endxy\\
\xy
(0,.8)*{\includegraphics[scale=1.15]{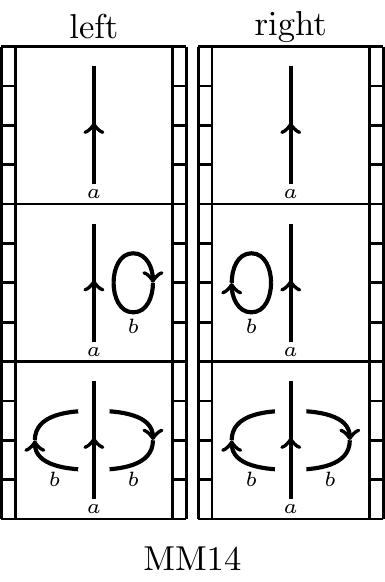}};
\endxy
\;,\;
\xy
(0,0)*{\includegraphics[scale=1.15]{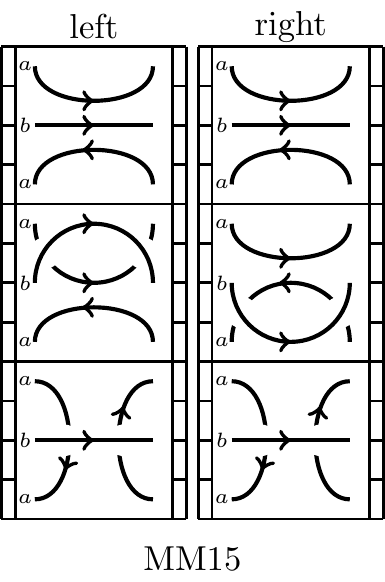}};
\endxy
\end{gather*}
\caption{The non-reversible movie moves. 
The columns show equivalent movies when read down- or upwards.
}\label{fig:movie3}
\end{figure}

Now, we extend the definition of $\Hgen{\cdot}$ 
to a canopolis functor
\[
\Hgen{\cdot}\colon\coltanglesfree\to\Komh(\foam{N})
\]
by assigning (homotopy classes of) chain 
maps to the generating cobordisms in 
\fullref{fig:RMsurfaces}. To cups, saddles and caps, we associate 
the chain maps given by acting with the corresponding foam 
from \eqref{eq:foamgen1} on every chain group. To Reidemeister 
cobordisms we assign the corresponding Reidemeister homotopy 
equivalences with the scaling identified in 
\fullref{subsec:simple-complexes}.
Finally, the crossing reordering isomorphisms are 
sent to the corresponding reordering isomorphisms in $\Komh(\foam{N})$.

\begin{remark}
By construction, $\Hgen{\cdot}$ assigns 
homotopy inverse chain maps to Reidemeister moves 
and their inverses. The chain maps assigned to 
movies from \fullref{fig:movie1} are 
thus, homotopic to the corresponding identity 
chain maps and we express this by saying that 
$\Hgen{\cdot}$ respects the movie moves 
$\MM{1}$--$\MM{5}$, see 
\fullref{theorem:wulink}.
\end{remark}

The main goal of this paper is to show 
that $\Hgen{\cdot}$ assigns equal (homotopy classes of) chain maps 
to cobordism movies which are related by one 
of the remaining movie moves $\MM{6}$--$\MM{15}$, and thus:

\begin{theoremn}\label{theorem:funclink}\textbf{(Functoriality.)}
The canopolis functor $\Hgen{\cdot}\colon\coltanglesfree\to\Komh(\foam{N})$ 
factors through a functor
\[
\Hgen{\cdot}\colon\coltangles\to\Komh(\Foam{N}).
\hspace{5.55cm}
\raisebox{-.01cm}{\makeqed}
\hspace{-5.55cm}
\]
\end{theoremn}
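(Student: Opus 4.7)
The plan is to follow Bar-Natan's canopolis strategy combined with Blanchet's deformation approach. Since the movie moves $\MM{1}$--$\MM{5}$ are already handled by Reidemeister invariance (\fullref{theorem:wulink}), functoriality reduces to verifying that, for each of the remaining moves $\MM{6}$--$\MM{15}$ (together with all orientation, coloring, and crossing-height variants), the two chain maps $f_1, f_2 \colon \Hgen{T} \to \Hgen{T'}$ assigned by $\Hgen{\cdot}$ coincide in $\Komh(\foam{N})$. I would split this into two independent logical steps, which roughly correspond to the two subsections \fullref{subsec:uptounits} and \fullref{subsec:checkmm} announced in the introduction.

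Step 1 (equality up to a unit). By the canopolis formalism it suffices to prove the equality locally, in a small disk around the support of the movie move. Combining the bending trick \eqref{eq:bending-trick} with the graded-rank computation of \fullref{rem:gradhom}, the hom-space between the relevant chain groups in the bidegree of $f_1$ and $f_2$ is a free $\sym(\Eq)$-module of rank one, generated by an obvious local identity-like foam. Hence up to homotopy, $f_1$ and $f_2$ must agree up to a scalar $c_{\mathrm{mm}} \in \sym(\Eq)$; since both $f_1$ and $f_2$ become isomorphisms after inverting the movie (using Reidemeister invariance and the invertibility of the generating cobordism maps), the scalar must lie in $\C^{\times}$, and in fact in $\C^{\times} \subset \sym(\Eq)$ by degree counting.

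Step 2 (each scalar equals $1$). Post-composing with $\spec \colon \Foam{N} \to \FoamS{N}$ transports the identity $f_1 = c_{\mathrm{mm}} f_2$ to an identity between specialized chain maps with exactly the same scalar $c_{\mathrm{mm}}$. In $\Komh(\FoamSH)$, \fullref{lemma:crossing-color} splits every crossing complex into idempotent-colored webs with trivial differentials, and \fullref{lemma:thecomplexes2} shows that each Reidemeister homotopy equivalence becomes a diagonal matrix of invertible, idempotent-colored foams, given explicitly on simple resolutions by \fullref{lemma:induced-RM}, \fullref{lemma:induced-RM3} and the composite \eqref{eq:R3hard}. Consequently, $c_{\mathrm{mm}}$ can be read off from a single favourite summand, as in \fullref{definition:nice-colors}, by composing the explicit foams and reducing via the idempotent-colored relations from \fullref{lemma:scalar-the-first}, \fullref{lemma:scalar-the-third} and \fullref{lemma:saddle-reverse}. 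For each movie move one then obtains a concrete scalar expression, and the task is to show that the various contributions cancel to $c_{\mathrm{mm}} = 1$.

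The main obstacle will be Step 2 for the non-reversible movie moves $\MM{10}$--$\MM{15}$, and especially $\MM{10}$ with its coloring and orientation variants. These moves intertwine Reidemeister homotopy equivalences with saddle cobordisms and mix different crossing types, so the scalar $c_{\mathrm{mm}}$ collects contributions from several distinct sources: the signs $\epsilon$ in \fullref{lemma:induced-RM} and $\tau$, $\varepsilon$ from the (R3--) resolution \eqref{eq:R3hard}, the factor $(-1)^{\binom{a}{2}}$ from neck-cutting \eqref{eq:neckcut}, the Koszul signs from \fullref{convention:Koszul}, and the rational units $\scalar{A}$, $\scalar{A,B}$ coming from idempotent-decorated bubbles and blisters as in \eqref{eq:idem-bubble} and \eqref{eq:colblister}. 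Verifying the delicate cancellation of all of these contributions for every admissible coloring and orientation, while keeping consistent choices of active strands in the (R3--) decomposition, is the real content of the proof.
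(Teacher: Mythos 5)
Your proposal follows essentially the same route as the paper: Bar-Natan's canopolis argument to establish the movie moves up to units in $\C^{\times}$ (\fullref{subsec:uptounits}), followed by a Blanchet-style computation of the scalars in the $\alphS$-deformation restricted to simple resolutions (\fullref{subsec:checkmm}), with the same key ingredients (\fullref{lemma:crossing-color}, \fullref{lemma:thecomplexes2}, the explicit Reidemeister foams and the idempotent-colored relations) and the same identification of $\MM{10}$ and its variants as the delicate case. One small correction to your Step 1: for the reversible moves the relevant one-dimensionality is not of hom-spaces between individual chain groups (the frames contain crossings, so the complexes have many chain groups), but of the space of homotopy classes of degree-zero endomorphisms of the whole complex, which is obtained by Bar-Natan's reduction using invertibility of crossing complexes under planar composition as in \fullref{lemma:BNsimple}; with that adjustment your argument is exactly \fullref{proposition:funclink}.
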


The proof of this theorem is contained in the following 
two sections. First we use an abstract argument to see that 
$\Hgen{\cdot}$ respects movie moves up to scalars in $\C$. 
Finally, in \fullref{subsec:checkmm}, we compute these 
scalars via the $\alphS$-deformation and find them to be equal to one.

\subsection{Functoriality up to scalars}\label{subsec:uptounits}
We follow Bar-Natan's strategy \cite[Section 8]{BN1}. 
The following Lemma contains the key idea of this strategy.

\begin{lemma}\label{lemma:BNsimple} 
Let $T_D$ be a diagram of a tangle which 
is isotopic (without fixing boundary points) 
to a trivial tangle. Then the space of  
degree zero endomorphisms 
of $\Hgen{T_D}\in\Komh(\Foam{N})$ is one-dimensional over $\C$.
\end{lemma}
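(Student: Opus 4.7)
The plan follows Bar-Natan's canopolis strategy: reformulate $\End^0$ as a graded piece of a link invariant, then evaluate for the unlink.

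First, I would invoke the chain-level analogue of the bending trick \eqref{eq:bending-trick}. The canopolis operations extend to chain complexes, so bending the entire top of $\Hgen{T_D}$ around yields an identification
\[
\End^0_{\Komh(\Foam{N})}(\Hgen{T_D}) \;\cong\; \Hom^{0}_{\Komh(\Foam{N})}\!\bigl(\emptyset,\; \Hgen{T_D \cup_\phi \overline{T_D}}\bigr),
\]
up to an explicit degree shift coming from the cup/cap foams appearing in the bending. Because $T_D$ is isotopic (with boundary movement allowed) to a trivial tangle, the closure $L := T_D \cup_\phi \overline{T_D}$ is isotopic as a link to an unlink $U$ whose components carry the labels of the strands of $T_D$. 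By the link invariance provided by \fullref{theorem:wulink}, $\Hgen{L} \cong \Hgen{U}$ in $\Komh(\Foam{N})$, so the computation reduces to locating the relevant graded piece of the invariant of $U$.

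Second, since $U$ is an unlink, $\Hgen{U}$ is concentrated in homological degree zero and equals the disjoint union of labeled circles, so its homology is the tensor product $\bigotimes_i H^*_{\mathrm{GL}_N}(\grass_{a_i})$ described in \fullref{example:unknot}. Each factor has a one-dimensional degree-zero part over $\C$, spanned by the unit of the corresponding cohomology ring, and the tensor of units is therefore the only element in overall degree zero. After matching the grading shifts introduced by the bending, this one-dimensional subspace corresponds precisely to $\End^0_{\Komh(\Foam{N})}(\Hgen{T_D})$; since the identity chain map $\id_{\Hgen{T_D}}$ is a nonzero element of this space, it spans it, proving the claim.

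The main obstacle is the careful bookkeeping of grading shifts under the bending procedure, to ensure that degree-zero endomorphisms on the left correspond precisely to the degree-zero component of the unlink homology on the right rather than some shifted piece. Once this is done, the result follows from the explicit description of the unknot/unlink invariants via equivariant Grassmannian cohomology; this is essentially the categorical upgrade of the fact that the MOY-evaluation of the closure of a trivial tangle is the product of quantum dimensions of fundamental representations, whose constant coefficient is one.
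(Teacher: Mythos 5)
Your strategy is sound, but it is genuinely different from the paper's proof, and most of its work is concentrated in a step you only assert. The paper argues directly: planar composition with a crossing (and with the complex of the reversed crossing) is an invertible operation up to homotopy, so $T_D$ can be untangled by composing with crossings along the boundary, reducing the claim to the degree-zero endomorphisms of a trivial tangle diagram concentrated in homological degree zero, which are one-dimensional by \fullref{rem:gradhom}; this is exactly the reduction of \cite[Lemmas 8.7, 8.8 and 8.9]{BN1}. You instead dualize: identify the endomorphism complex of $\Hgen{T_D}$ with the invariant of the closure diagram via a chain-level bending trick, note that the closure of a tangle that is trivial up to boundary-moving isotopy with its mirror is an unlink (this is the double of a trivial tangle and deserves a sentence of justification), invoke \fullref{theorem:wulink}, and read off the relevant graded piece of $\bigotimes_i H^*_{\mathrm{GL}_N}(\grass_{a_i})$, which after the shift by $-\sum_i a_i(N-a_i)$ coming from the bending cups is indeed one-dimensional over $\C$. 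What your route buys is a pleasant interpretation of the endomorphism space as a graded piece of the unlink invariant; what it costs is that the paper only establishes \eqref{eq:bending-trick} for foams between webs, so upgrading it to an isomorphism of Hom-complexes in $\Komh(\Foam{N})$ is a lemma you would actually have to prove: you must check that $\Hgen{\overline{T_D}}$ (the mirror diagram, with positive and negative crossings exchanged) is precisely the bending-dual of $\Hgen{T_D}$, i.e. that the reflected differential foams and the Koszul signs of \fullref{convention:Koszul} match under bending and that the homological and $q$-shifts cancel because mirroring reverses the writhe. None of this fails, and your acknowledged grading bookkeeping then goes through, but the paper's reduction avoids all of it by using only ingredients already established (invertibility of the crossing complexes via Reidemeister 2 and the computation in \fullref{rem:gradhom}).
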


\begin{proof} 
Since planar composition with crossings on the 
boundary is an invertible operation, the problem 
reduces to the case where $T_D$ is a trivial tangle 
diagram supported in homological degree zero, 
see \cite[Lemmas 8.7, 8.8 and 8.9]{BN1}. For such we have already observed in 
\fullref{rem:gradhom} that the space of 
degree zero endomorphisms is one-dimensional.
\end{proof}

\begin{proposition}\label{proposition:funclink}
The movie moves hold up to scalars in $\Komh(\Foam{N})$.
\end{proposition}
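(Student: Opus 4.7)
The plan is to follow Bar-Natan's strategy \cite{BN1} and take \fullref{lemma:BNsimple} as the essential input. For each movie move $\MM{6}$--$\MM{15}$, I would begin by observing that both sides present a pair of rigidly-built cobordisms $C_1,C_2\in\coltanglesfree$ representing the same isotopy class of cobordism in $\coltangles$ between a common source tangle $T^s$ and target tangle $T^t$ in a disk. The goal is to show that the induced chain maps $\Hgen{C_1},\Hgen{C_2}\in\Hom_{\Komh(\Foam{N})}(\Hgen{T^s},\Hgen{T^t})$ agree up to a nonzero scalar.

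First, I would verify that $\Hgen{C_1}$ and $\Hgen{C_2}$ are homogeneous of the same bidegree: each Morse-type generator contributes a prescribed bidegree per \fullref{definition:bidegree}, the Reidemeister-type generators were normalized to be homogeneous of degree zero in \fullref{subsec:simple-complexes}, and isotopic cobordisms share the same Euler characteristic and Morse content up to Reidemeister transitions, so both chain maps land in the same graded component of the Hom space. Next, I would show that this graded component is one-dimensional over $\C$. Since $T^s$ and $T^t$ are isotopic oriented tangles (the movie itself realizes such an isotopy), \fullref{theorem:wulink} yields a homotopy equivalence $\Hgen{T^s}\simeq\Hgen{T^t}$, which identifies the Hom space with a graded component of $\End_{\Komh(\Foam{N})}(\Hgen{T^s})$, and an inspection of each movie move reveals that $T^s$ is isotopic --- without fixing boundary points --- to a trivial tangle in its defining disk. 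For the movies $\MM{6}$--$\MM{10}$, where both chain maps have bidegree zero, \fullref{lemma:BNsimple} then applies verbatim.

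For the movies $\MM{11}$--$\MM{15}$, where saddle-type generators produce chain maps of nonzero bidegree, I would adapt the reduction used in the proof of \fullref{lemma:BNsimple}: planar composition with crossings on the boundary is an invertible operation on chain complexes, reducing the problem to computing a specific graded component of $\End_{\Komh(\Foam{N})}(\Hgen{T^{\mathrm{triv}}_D})$ for a trivial diagram $T^{\mathrm{triv}}_D$, and via the bending trick \eqref{eq:bending-trick} this in turn becomes a graded component of a Hom space in $\Foam{N}$ between the empty web and a collection of labeled parallel circles. By \fullref{rem:gradhom} and the MOY-evaluation recalled in \fullref{sec:univconstr}, the graded rank of such a Hom space is explicit, and I would check case by case that in the exact bidegrees realized by the Morse content of each movie move the component is indeed one-dimensional. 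Combining these steps, $\Hgen{C_1}$ and $\Hgen{C_2}$ are scalar multiples of one another in a one-dimensional graded Hom space. The main obstacle will be the case-by-case bidegree bookkeeping for $\MM{11}$--$\MM{15}$: each such movie involves several saddles, cups, and caps whose degrees must be carefully summed and checked against the graded MOY-evaluation data. The actual values of these scalars, and the verification that they all equal one, are then deferred to the simpler $\alphS$-deformed setting of \fullref{subsec:checkmm}.
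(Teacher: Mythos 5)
Your overall strategy is the paper's: Bar-Natan's argument via \fullref{lemma:BNsimple} for the reversible moves, and one-dimensionality of the relevant morphism spaces (via \fullref{rem:gradhom} and the MOY-evaluation) for the irreversible ones, with the scalars deferred to the $\alphS$-deformation. The treatment of $\MM{6}$--$\MM{10}$ is correct and matches the paper, since there the initial and final frames coincide and the composite is an invertible degree-zero endomorphism of a complex to which \fullref{lemma:BNsimple} applies.

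However, there is a genuine flaw in the step you use to set up the irreversible moves. You assert that $T^s$ and $T^t$ are isotopic tangles because ``the movie itself realizes such an isotopy'', and you use \fullref{theorem:wulink} to identify $\Hom(\Hgen{T^s},\Hgen{T^t})$ with a graded piece of $\End(\Hgen{T^s})$, eventually of $\End(\Hgen{T^{\mathrm{triv}}_D})$. For $\MM{11}$--$\MM{15}$ this is false: the movies contain births, deaths and saddles, so the initial and final frames differ by Morse moves (e.g.\ in $\MM{12}$ and $\MM{14}$ the final frame has an extra closed circle component) and are not isotopic tangles; the movie realizes a cobordism, not an isotopy, so \fullref{theorem:wulink} gives no such identification and \fullref{lemma:BNsimple} cannot be invoked even after adaptation in the form you describe. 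A second, smaller issue: the crossing-removal trick underlying \fullref{lemma:BNsimple} only eliminates crossings adjacent to the boundary that are present in both frames, so it does not dispose of, say, the Reidemeister~1 kink appearing in $\MM{13}$. The repair is essentially what the paper does: establish one-dimensionality of the morphism space between the \emph{non-isotopic} initial and final frame complexes in the relevant degree directly (this is immediate for $\MM{11}$, and for $\MM{13}$ after expanding the crossing into its complex), and for the remaining moves cut out the scenes consisting only of homotopy equivalences and analyze the frames differing by a Morse move, whose morphism spaces are one-dimensional by \fullref{rem:gradhom}. Your proposed MOY/bending computation can serve the same purpose, but it must be carried out for $\Hom(\Hgen{T^s},\Hgen{T^t})$ (after simplifying the frames up to homotopy equivalence), not for the endomorphisms of a trivial tangle.
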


\begin{proof}
For the reversible movie moves 
from \fullref{fig:movie2}, 
it follows immediately from 
\fullref{lemma:BNsimple} that the chain map associated 
to the complicated movie agrees up to a complex scalar 
with the identity chain map. Additionally, since it is 
a composition of homotopy equivalences, it is invertible, 
and thus, non-zero.

For the non-reversible movies 
from \fullref{fig:movie3} we first check that the morphism spaces 
between the initial and final frames of the 
movies are one-dimensional over $\C$ in the relevant 
degrees. For $\MM{11}$ we have already 
seen this and $\MM{13}$ is treated analogously after 
expanding the crossing into a chain complex. For the others 
movie moves, one can cut scenes 
from the movie during which only homotopy equivalences 
happen. It remains to analyze frames differing by Morse moves. The corresponding morphisms spaces 
have been identified to be one-dimensional in 
\fullref{rem:gradhom}. This 
shows that the chain maps associated to the two sides 
of such a movie move agree up to a scalar. 
\end{proof}

The non-reversible 
movie moves might hold only trivially, i.e. 
both sides might represent the zero chain map. 
However, in the next section we shall see that these maps are never zero.

\begin{remark}\label{remark:integrality-3} 
(Integrality.) 
The proof of 
functoriality up to scalars over $\Z$ is completely analogous.
Here we use that all morphism spaces are free $\symz(\Eq)$-modules, 
see \fullref{remark:integrality-2}.
\end{remark}

\subsection{Computing the scalars}\label{subsec:checkmm}
It remains to compute the scalars by which the 
chain maps associated to the two sides of a movie 
move $\MM{6}$--$\MM{15}$ might differ. We check 
this on the $\alphS$-deformation. 

\begin{lemma}\label{lemma:from-def-to-real2}
If the movie moves hold non-trivially in $\Komh(\FoamSH)$, 
then they hold non-trivially in $\Komh(\Foam{N})$.
\end{lemma}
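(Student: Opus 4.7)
The plan is to leverage \fullref{proposition:funclink} together with the $\C$-linear specialization functor $\spec\colon\Foam{N}\to\FoamS{N}$. First, I observe that $\spec$ extends to a canopolis functor $\spec\colon\Komh(\Foam{N})\to\Komh(\FoamSH)$, and that by construction of $\Hgen{\cdot}$ and $\HgenS{\cdot}$ on generating cobordisms, $\spec$ intertwines these two canopolis morphisms. That is, for any rigidly built cobordism $C\in\coltanglesfree$, $\spec(\Hgen{C})$ represents the chain map $\HgenS{C}$. This follows because each generating foam used to define $\Hgen{C}$ specializes termwise, and the Reidemeister homotopy equivalences were scaled in \fullref{subsec:simple-complexes} precisely so that their $\alphS$-specializations match the maps analyzed there.

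Now fix one of the movie moves $\MM{6}$--$\MM{15}$, and let $f, g$ denote the chain maps in $\Komh(\Foam{N})$ assigned by $\Hgen{\cdot}$ to its two sides. From the proof of \fullref{proposition:funclink}, together with \fullref{lemma:BNsimple} and \fullref{rem:gradhom}, the morphism space in $\Komh(\Foam{N})$ in which $f$ and $g$ live is one-dimensional over $\C$ in the relevant bidegree, so $f = c\cdot g$ for a unique scalar $c\in\C$. Applying the $\C$-linear functor $\spec$ transports this relation to $\spec(f) = c\cdot\spec(g)$ in $\Komh(\FoamSH)$. If the movie move holds non-trivially in $\Komh(\FoamSH)$, i.e.\ $\spec(f) = \spec(g) \neq 0$, then substituting forces $c = 1$, and therefore $f = g$ already in $\Komh(\Foam{N})$. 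Since $\spec$ sends the zero chain map to zero, the non-vanishing of $\spec(f)$ also forces $f \neq 0$, and similarly for $g$, so the movie move holds non-trivially in the equivariant setting as well.

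The conceptual point that makes this argument so short is that the ambiguity scalars provided by \fullref{proposition:funclink} already live in $\C$ rather than in the larger ground ring $\sym(\Eq)$, so the specialization morphism neither loses nor introduces information about them. The main obstacle is thus not in the present lemma at all but is deferred to the actual computation of these scalars inside $\Komh(\FoamSH)$ via the idempotent-colored foam calculus developed in \fullref{subsec:special} and \fullref{subsec:func-karoubi}, which will occupy the remainder of \fullref{subsec:checkmm}. There, the decomposition \eqref{eq:crossing-color-generic} reduces each movie move to an explicit foam comparison between simple resolutions, which is then dispatched using the relations from \fullref{subsec:equi-rel} and \fullref{subsec:special}.
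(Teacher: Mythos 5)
Your argument is correct and is essentially the paper's own proof: both deduce from \fullref{proposition:funclink} that the two sides differ by a scalar in $\C$, transport this relation along the specialization into $\Komh(\FoamSH)$, and conclude from non-trivial agreement there that the scalar is one and the maps are non-zero upstairs. The only cosmetic difference is that you spell out the intertwining of $\spec$ with $\Hgen{\cdot}$ and $\HgenS{\cdot}$, which the paper leaves implicit.
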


\begin{proof}
Since we already know that the movie moves hold in $\Komh(\Foam{N})$ 
up to scalars in $\C$, these scalars can be computed after 
specializing to $\Komh(\FoamS{N})$ and 
further embedding in $\Komh(\FoamSH)$. The assumption guarantees 
that all these scalars are all equal to one.
\end{proof}

\begin{lemma}\label{lemma:from-def-to-real}
If the movie moves hold non-trivially on simple resolutions, then they 
hold non-trivially in $\Komh(\FoamSH)$.
\end{lemma}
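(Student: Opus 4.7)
The plan is to combine \fullref{proposition:funclink}---which already gives functoriality up to a single scalar $c \in \C$ in $\Komh(\Foam{N})$---with the idempotent-coloring block-decomposition that is available in the generic deformation $\Komh(\FoamSH)$. The key insight is that the scalar $c$ associated to any given movie move is a single complex number, so it is faithfully recorded on every non-zero summand of the idempotent-coloring decomposition; the simple resolution summand is simply the most convenient one on which to read it off.

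Concretely, I would fix a movie move with associated chain maps $f_1, f_2 \colon \HgenS{T_1} \to \HgenS{T_2}$ in $\Komh(\Foam{N})$. By \fullref{proposition:funclink} there is some $c \in \C$ with $f_1 = c \cdot f_2$, and this identity is preserved under the canopolis morphism $\spec$ and the subsequent embedding into $\Komh(\FoamSH)$. In $\Komh(\FoamSH)$, each complex $\HgenS{T_i}$ splits as a direct sum over admissible idempotent-colorings of the boundary webs (cf.\ \fullref{lemma:idem2} and \eqref{eq:local-idems}), and chain maps between such complexes are block-diagonal with respect to this decomposition, since foam generators preserve admissible idempotent-colorings on their boundary by \fullref{lemma:admissibility}. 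As $T_1$ and $T_2$ share the same boundary endpoints of the tangle, the favourite simple resolution summand from \fullref{definition:nice-colors} is well-defined in both complexes, and on it the identity $f_1 = c \cdot f_2$ restricts to $f_1|_{\mathrm{simple}} = c \cdot f_2|_{\mathrm{simple}}$. By hypothesis, the movie move holds non-trivially on simple resolutions, so $f_1|_{\mathrm{simple}} = f_2|_{\mathrm{simple}} \neq 0$, which forces $c = 1$. Hence $f_1 = f_2$ as chain maps in $\Komh(\FoamSH)$, and the common map is non-zero, establishing the movie move non-trivially.

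The main obstacle, though somewhat hidden, lies in making the block-diagonality claim precise: one must verify that every generating cobordism foam---cups, caps, saddles, as well as the scaled Reidemeister homotopy equivalences assembled in \fullref{subsec:simple-complexes}---decomposes neatly across the idempotent-coloring direct sum. The foundational input is \fullref{lemma:admissibility}, which ensures that admissible idempotent-colorings are preserved by individual foam generators; combining this with the local description of Reidemeister homotopy equivalences from \fullref{lemma:thecomplexes2} yields the required block-diagonality of the movie move chain maps. Once this is in place, the scalar-propagation argument above completes the proof and reduces the remaining functoriality computations to checks on the single colored webs furnished by simple resolutions, which is the content of \fullref{subsec:checkmm}.
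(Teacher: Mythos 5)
Your proposal is correct and follows essentially the same route as the paper: specialize \fullref{proposition:funclink} to get the movie moves up to a scalar $c\in\C$ in $\Komh(\FoamSH)$, read off $c$ on the simple resolution summand, and use the hypothesis to force $c=1$. The extra block-diagonality discussion is not really needed (restricting to a direct summand subcomplex already suffices, and strictly speaking non-zero foams only preserve idempotent-colors on boundary edges, which is the fact used in \fullref{lemma:thecomplexes2} rather than \fullref{lemma:admissibility}), but this does not affect the argument.
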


\begin{proof}
By specializing \fullref{proposition:funclink}, we see 
that, up scalars in $\C$, the movie moves hold in $\Komh(\FoamSH)$. 
Next, if two chain maps agree up to a scalar, this scalar can be computed by 
comparing the chain maps restricted to a subcomplex where one of 
them acts non-trivially. Here we 
choose the subcomplex in $\Komh(\FoamSH)$ given by a simple resolution. 
\end{proof}

In the following, we shall compute the 
chain maps appearing in the movie moves 
$\MM{6}$--$\MM{15}$ when restricted to simple 
resolutions. We will find that the chain maps 
on both sides of each movie move agree and are 
non-zero. This will satisfy the assumption in 
\fullref{lemma:from-def-to-real} and consequently 
\fullref{lemma:from-def-to-real2} and, thus, 
complete the proof of \fullref{theorem:funclink}.

\begin{remark}\label{remark:simple-res}
The simple resolutions of a tangle diagram are 
invariant under interchanging any number of 
positive and negative crossing (up to $q$-degree shifts which 
we ignore here). Moreover, thanks to 
\fullref{convention:Koszul} and the fact that 
simple resolutions of crossings are supported in even 
homological degree (namely zero), 
we do not need to consider reordering isomorphisms, 
since they all act by the identity.
Altogether, this allows us to reduce the number of 
different variants of movie moves which we need to check. 

However, the maps associated to Reidemeister 
cobordisms are usually dependent on the relative 
sizes of the labels on strands as well as the height 
ordering of strands. In checking the movie moves, we 
shall display one such variant for each choice and comment on the others.
\end{remark}

\begin{lemma}\label{lemma:MM6}
The movie move $\MM{6}$ holds on simple resolutions.
\end{lemma}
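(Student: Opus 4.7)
The plan is to apply \fullref{lemma:from-def-to-real}: verify that the two sides of $\MM{6}$ induce equal chain maps when restricted to simple resolutions, then invoke \fullref{lemma:from-def-to-real2} to promote the conclusion to $\Komh(\Foam{N})$. Since $\MM{6}$ is a reversible movie move from \fullref{fig:movie2}, what needs to be shown concretely is that the composite foam arising from the nontrivial side restricts to the identity on the simple resolution of the common initial and final frame.

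First, I would fix that simple resolution as prescribed by \fullref{definition:nice-colors}. By \fullref{remark:simple-res} the simple resolutions of all crossings involved live in homological degree zero, so the chain map in question descends to a single foam between idempotent-colored webs in $\FoamS{N}$ and no reordering signs intervene. Second, I would replace each Reidemeister cobordism appearing in the movie by its image on simple resolutions as furnished by \fullref{lemma:induced-RM} and \fullref{lemma:induced-RM3}. This produces a vertically stacked composite of foams of type $F_1, G_1, F_2^{\pm}, G_2^{\pm}, F_3^{\pm}, G_3^{\pm}$, pre-multiplied by a product of scalar factors such as $\scalar{A}$, the signs $\epsilon = (-1)^{\min(a,b)(a-b)}$, and, whenever $\MM{6}$ involves an R3$-$ instance, the composite factors prescribed by \eqref{eq:R3hard2}.

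Third, I would simplify the stacked foam using the \fcd calculus of \fullref{remark:flow-chart}, together with the MP-type relations \eqref{eq:MP-flow}, the pitchfork relations \eqref{eq:idem-pitchfork}--\eqref{eq:idem-pitchfork3}, and the scalar identities in \fullref{lemma:scalar-the-first}, \fullref{lemma:scalar-the-third}, and \fullref{lemma:saddle-reverse}. The target outcome is the identity foam on the simple resolution with overall coefficient one; the normalizations fixed in \fullref{subsec:simple-complexes} are designed precisely so that this cancellation takes place.

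The main obstacle is scalar bookkeeping. One has to track every $\scalar{A}$ (from rescaled R1 homotopy equivalences), every sign $\epsilon$ (from R2$\pm$ normalizations), and every ratio such as $\scalar{X,B}/\scalar{A\setminus X,X}$ that appears when applying \eqref{eq:idem-disc2} during the foam simplification, and then verify that all of these factors cancel. By \fullref{remark:simple-res} the same strategy has to be carried out uniformly across all variants of $\MM{6}$ arising from different crossing types, strand height orderings, and relative label sizes; each variant requires its own accounting, even though the underlying reasoning is identical.
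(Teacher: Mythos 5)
Your outline is sound as far as it goes, and for the version of $\MM{6}$ involving two (R2+) moves and an (R3+) move it coincides with the paper's proof: restrict to the simple resolution, substitute the foams $F_2^+,G_2^+,F_3^+,G_3^+$ with their signs from \fullref{lemma:induced-RM} and \fullref{lemma:induced-RM3}, and simplify the composite \fcd via \eqref{eq:MP-flow}, \eqref{eq:idem-KLR2}, \eqref{eq:idem-disc2} and \eqref{eq:idem-KLR}. However, as written the proposal stops exactly where the lemma begins: the entire content of the statement is the verification that the composite equals the identity foam with coefficient one, and you assert this ("the normalizations \dots are designed precisely so that this cancellation takes place") rather than prove it. The normalizations of \fullref{subsec:simple-complexes} were chosen so that each Reidemeister foam and its inverse compose to the identity; that by itself does not imply that the particular interleaving of moves occurring in $\MM{6}$ composes to the identity --- establishing that is the point of the lemma, so deferring the scalar bookkeeping is a genuine gap, not a routine omission. (Also, no $\scalar{A}$ factors occur in $\MM{6}$, since no (R1) moves appear; only the $\epsilon$-signs from (R2$\pm$) enter.)

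The more serious divergence is the variant of $\MM{6}$ built from two (R2--) moves and an (R3--) move. You propose expanding the (R3--) chain map by its definition \eqref{eq:R3hard2} and grinding through the resulting stack of roughly a dozen Reidemeister foams in every labeling and orientation variant. The paper deliberately avoids this: it far-commutes the final (R2--) move past the four moves preceding it, cancels the resulting inverse pair of (R2--) moves, cuts a second detour using $\MM{9}$ (which is proved independently in \fullref{lemma:MM9}, so no circularity arises), and is then left only with a short monodromy check comparing two ways of performing an (R3+) move, settled by a four-step \fcd computation. Your brute-force route is not wrong in principle, but it is untested and substantially longer, and nothing in your proposal indicates how the accumulated signs $\epsilon$, $\tau$ and the ratios from \eqref{eq:idem-disc2} and \eqref{eq:saddle-reverse} would be seen to cancel in that long composite; either carry out that computation in full (for all relative label orders and crossing variants), or adopt a reduction of the paper's type.
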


\begin{proof}
$\MM{6}$ has two essentially 
different versions depending on the relative 
orientation of the strands between 
which Reidemeister $2$ moves happen. 
The first version involves two (R2+) and (R3+) moves. 
For $a\geq b\geq c$ it is given on simple resolutions as:
\begin{gather*}
\xy
\xymatrix{
\xy
(0,1)*{\includegraphics[scale=1.15]{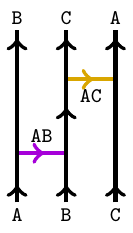}};
\endxy
\ar@<2pt>[r]^{\epsilon F_2^+}
&
\xy
(0,0)*{\includegraphics[scale=1.15]{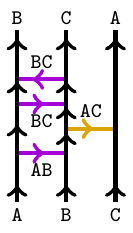}};
\endxy
\ar@<2pt>[r]^{F_3^{+}}
\ar@<2pt>[l]^{G_2^+}
&
\xy
(0,0)*{\includegraphics[scale=1.15]{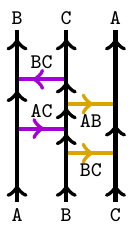}};
\endxy
\ar@<2pt>[r]^{G_3^{+}}
\ar@<2pt>[l]^{G_3^{+}}
&
\xy
(0,0)*{\includegraphics[scale=1.15]{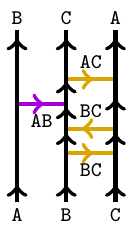}};
\endxy
\ar@<2pt>[r]^{G_2^+}
\ar@<2pt>[l]^{F_3^{+}}
&
\xy
(0,0)*{\includegraphics[scale=1.15]{figs/fig-3-22}};
\endxy
\ar@<2pt>[l]^{\epsilon F_2^+}
}
\endxy
\end{gather*}
Here $\epsilon=(-1)^{c(b-c)}$. The composite foam for reading left-to-right is as follows.
\[
(-1)^{c(b-c)}
\xy
(0,0)*{\includegraphics[scale=1.15]{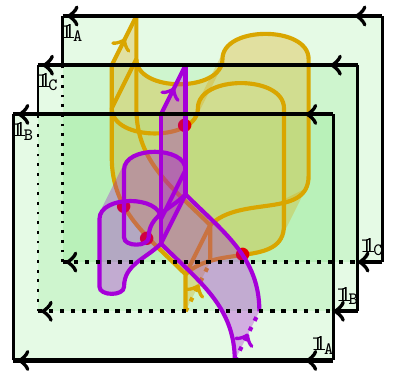}};
\endxy
\]
We compute that this foam is equal to the identity foam by simplifying its \fcd using the relations obtained in \fullref{subsec:special}.
\begin{gather*}
(-1)^{c(b-c)}\!\!
\xy
(0,0)*{\includegraphics[scale=1.15]{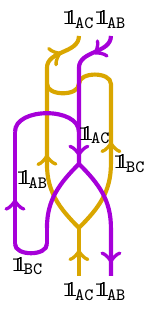}};
\endxy
\!\!=\! 
(-1)^{c(b-c)}
\xy
(0,0)*{\includegraphics[scale=1.15]{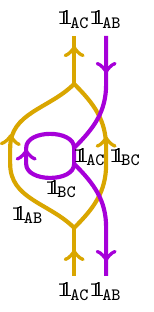}};
\endxy
\!\!=\!
\frac{(-1)^{c(b-c)}}{\scalar{BC,AB}}\!
\xy
(0,0)*{\includegraphics[scale=1.15]{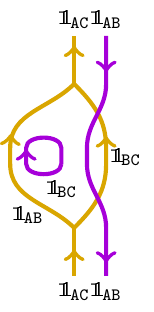}};
\endxy
\!\!=\!
\xy
(0,0)*{\includegraphics[scale=1.15]{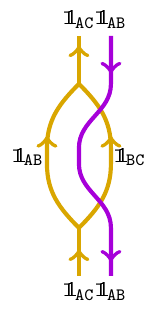}};
\endxy
\!\!=\!
\xy
(0,0)*{\includegraphics[scale=1.15]{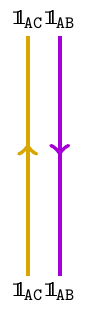}};
\endxy
\end{gather*}
Here we  have used \eqref{eq:MP-flow}, \eqref{eq:idem-KLR2}, \eqref{eq:idem-disc2} 
and \eqref{eq:idem-KLR}. Since this movie move is composed out of homotopy 
equivalences, its inverse right-to-left read version 
also gives the identity on simple resolutions. The 
cases of relative orderings of colors differing 
from $a\geq b\geq c$ and, more generally, all other 
versions of $\MM{6}$ which involve (R2+) moves are proved analogously.

The second version of the movie move $\MM{6}$ not 
only involves two (R2--) moves, but 
also an (R3--) move, whose associated chain 
map we have defined as a composition of 
one (R3+) and several (R2$\pm$) chain maps. The 
composition of these maps can be read off as 
the outer cycle of (R3+) and (R2$\pm$) chain 
maps in the following diagram, starting at the 
(marked) tangle diagram on the left.

\begin{align*}
\xy
(-10,0)*{
\begin{tikzpicture}[anchorbase,scale=1.15]
		\draw [dotted] (-.9,-1) to (.9,-1);
		\node at (0,0) {\includegraphics[scale=1.15]{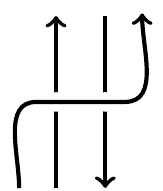}}; 
\end{tikzpicture}
};
(13,20)*{
\includegraphics[scale=1.15]{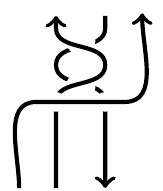}
};
(55,20)*{
\includegraphics[scale=1.15]{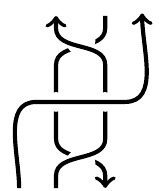}
};
(73,0)*{
\includegraphics[scale=1.15]{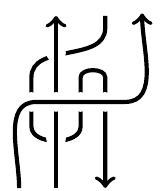}
};
(97,20)*{
\includegraphics[scale=1.15]{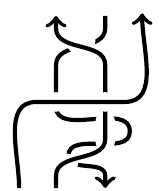}
};
(120,0)*{
\includegraphics[scale=1.15]{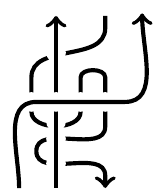}
};
(97,-20)*{
\includegraphics[scale=1.15]{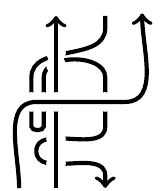}
};
(37,0)*{
\includegraphics[scale=1.15]{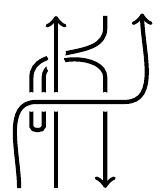}
};
(55,-20)*{
\includegraphics[scale=1.15]{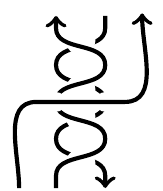}
};
(13,-20)*{
\includegraphics[scale=1.15]{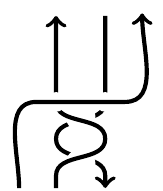}
};
(-2,17)*{
\begin{tikzpicture}[anchorbase,scale=.5]
		\draw [red,thick, ->] (-1,-1)to[out=90,in=180](1,1);
		\draw [thick, dashed, ->] (-1.35,-1)to[out=90,in=180](1,1.35);
		\draw [thick, dashed, <-] (-.65,-1)to[out=90,in=180](1,.65);
\end{tikzpicture}
};
(35.5,22)*{
\begin{tikzpicture}[anchorbase,scale=.5]
		\draw [red, thick, ->] (-1.5,0)to (1.5,0); 
\end{tikzpicture}
};
(75.5,22)*{
\begin{tikzpicture}[anchorbase,scale=.5]
		\draw [red, thick, ->] (-1.5,0)to (1.5,0); 
\end{tikzpicture}
};
(112,17)*{
\begin{tikzpicture}[anchorbase,scale=.5]
		\draw [red, thick, ->] (-1,1) to[out=0,in=90](1,-1);
\end{tikzpicture}
};
(112,-17)*{
\begin{tikzpicture}[anchorbase,scale=.5]
		\draw [red, thick, ->] (1,1)to[out=270,in=0](-1,-1);
\end{tikzpicture}
};
(75.5,-22)*{
\begin{tikzpicture}[anchorbase,scale=.5]
		\draw [red, thick, <-] (-1.5,0)to (1.5,0); 
\end{tikzpicture}
};
(35.5,-22)*{
\begin{tikzpicture}[anchorbase,scale=.5]
		\draw [red, thick, <-] (-1.5,0)to (1.5,0); 
\end{tikzpicture}
};
(-2,-17)*{
\begin{tikzpicture}[anchorbase,scale=.5]
		\draw [red, thick, ->] (1,-1)to[out=180,in=270](-1,1);
\end{tikzpicture}
};
(35.5,24)*{
\begin{tikzpicture}[anchorbase,scale=.5]
		\draw [thick, dashed, ->] (-1.5,0)to (1.5,0); 
\end{tikzpicture}
};
(75.5,24)*{
\begin{tikzpicture}[anchorbase,scale=.5]
		\draw [thick, dashed, ->] (-1.5,0)to (1.5,0); 
\end{tikzpicture}
};
(75.5,20)*{
\begin{tikzpicture}[anchorbase,scale=.5]
		\draw [thick, dashed, <-] (-1.5,0)to (1.5,0); 
\end{tikzpicture}
};
(64.5,11)*{
\begin{tikzpicture}[anchorbase,scale=.5]
		\draw [thick, dashed, ->] (-.7,.7)to (.7,-.7); 
\end{tikzpicture}
};
(55,0)*{
\begin{tikzpicture}[anchorbase,scale=.5]
		\draw [thick, dashed, <-] (-1.5,0)to (1.5,0); 
\end{tikzpicture}
};
(24.5,11)*{
\begin{tikzpicture}[anchorbase,scale=.5]
		\draw [thick, dashed, <-] (-.7,.7)to (.7,-.7); 
\end{tikzpicture}
};
\endxy
\end{align*}
Our task is to see that this cycle is 
equivalent to the identity chain map on 
the complex associated to the left tangle 
diagram. First note 
that the final (R2--) move in the cycle 
far-commutes with the four moves preceding 
it. The resulting composition is indicated 
by dashed arrows, which now involve the middle 
tangle diagrams. Next, observe that the first and 
last step in the dashed cycle are inverse (R2--) 
moves. The detour through the tangle diagram on 
the left can thus, be cut from the dashed cycle. 
Similarly, there is another detour through the 
tangle diagram on the top right, which can be cut 
out by $\MM{9}$ (which is proved 
independently in \fullref{lemma:MM9}). 
It remains to confirm that there is no monodromy around the dashed rhombus. 
After omitting the top left crossing from the diagrams, 
which does not participate in these moves, 
we recognize the required check as a comparison of 
two ways of performing a (R3+) move. 
\[
\xy
(0,0)*{\includegraphics[scale=1.15]{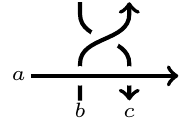}};
\endxy
\leftrightsquigarrow
\xy
(0,0)*{\includegraphics[scale=1.15]{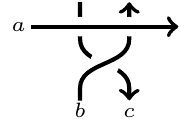}};
\endxy
\leftrightsquigarrow
\xy
(0,0)*{\includegraphics[scale=1.15]{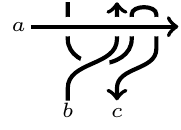}};
\endxy
\leftrightsquigarrow
\xy
(0,0)*{\includegraphics[scale=1.15]{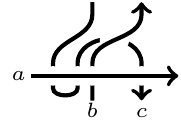}};
\endxy
\leftrightsquigarrow
\xy
(0,0)*{\includegraphics[scale=1.15]{figs/fig-3-42}};
\endxy
\]
The absence of monodromy can be checked 
on simple resolutions, e.g. for $a\geq b\geq c$:
\[
\xy
\xymatrix{
\xy
(0,1)*{\includegraphics[scale=1.15]{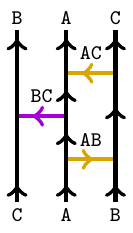}};
\endxy
\ar@<2pt>[r]^{F_3^+}&
\ar@<2pt>[l]^{G_3^+}
\xy
(0,0)*{\includegraphics[scale=1.15]{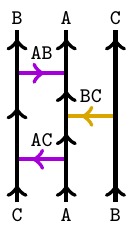}};
\endxy
\ar@<2pt>[r]^{\epsilon F_2^+}&
\ar@<2pt>[l]^{G_2^+}
\xy
(0,0)*{\includegraphics[scale=1.15]{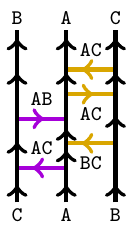}};
\endxy
\ar@<2pt>[r]^{F_3^+}&
\ar@<2pt>[l]^{G_3^+}
\xy
(0,0)*{\includegraphics[scale=1.15]{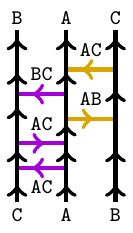}};
\endxy
\ar@<2pt>[r]^{\epsilon G_2^+}&
\ar@<2pt>[l]^{F_2^+}
\xy
(0,0)*{\includegraphics[scale=1.15]{figs/fig-3-46}};
\endxy
}
\endxy
\]
Again, $\epsilon=(-1)^{c(a-c)}$ 
(which appears twice and thus, does not contribute), and
we have rotated the webs to have boundary orientations 
pointing upward. A calculation
using \fcds
\begin{gather*}
\,
\xy
(0,0)*{\includegraphics[scale=1.15]{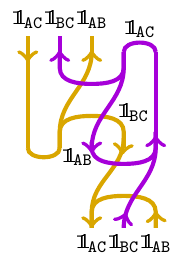}};
\endxy
=
\xy
(0,0)*{\includegraphics[scale=1.15]{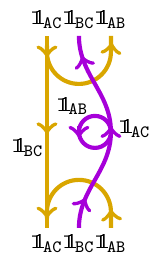}};
\endxy
=
(-1)^{c(a-b)}
\xy
(0,0)*{\includegraphics[scale=1.15]{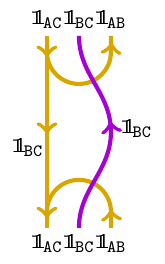}};
\endxy
=
\scalar{\AB,\BC}
\xy
(0,0)*{\includegraphics[scale=1.15]{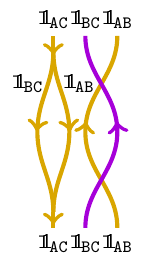}};
\endxy
=
\xy
(0,0)*{\includegraphics[scale=1.15]{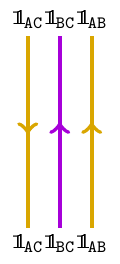}};
\endxy
\end{gather*}
verifies 
that the composite from left-to-right, respectively right-to-left, is the identity. 
Here we have used isotopy and \eqref{eq:MP-flow} first, then \eqref{eq:idem-KLR2} and 
\eqref{eq:idem-disc2}, then \eqref{eq:saddle-reverse} followed by \eqref{eq:idem-KLR2}, 
and finally \eqref{eq:MP-flow}. (We stress that some appearing scalars simplify due 
to \fullref{lemma:scalars}.)

Note that all other variants of $\MM{6}$ involving two (R2--) moves 
can be proven similarly using the short-cut strategy from above. 
Variants with a relative orderings of colors differing
from $a\geq b\geq c$ can be treated analogously.
\end{proof}

\begin{lemma}\label{lemma:MM7}
The movie move $\MM{7}$ holds on simple resolutions.
\end{lemma}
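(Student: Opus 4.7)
The plan is to follow the same strategy employed in the proof of \fullref{lemma:MM6}: first enumerate the essentially different variants of $\MM{7}$ (distinguished by the relative orientations of the active strands and by the relative ordering of the labels, e.g. $a\geq b\geq c$ versus its permutations), then restrict each side of the movie move to a simple resolution, express the resulting composite chain map as a single composite foam built from the elementary pieces $F_2^\pm,G_2^\pm,F_3^\pm,G_3^\pm$ from Figures \ref{fig:RMcomplexes} and \ref{fig:RM3standard}, and finally simplify it to the identity foam using the relations of \fullref{subsec:special}. By \fullref{lemma:from-def-to-real} (combined with \fullref{lemma:from-def-to-real2}) this suffices to deduce that $\MM{7}$ holds in $\Komh(\Foam{N})$.

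First I would dispose of the variants involving only positive Reidemeister moves: for these, the composite on a simple resolution reads as a sequence of $F_2^+,G_2^+,F_3^+,G_3^+$ together with explicit scalars $\epsilon=(-1)^{\min(a,b)(a-b)}$ coming from our normalization in \fullref{lemma:induced-RM}. The composite foam can then be drawn as a phase diagram, to which one applies the Matveev--Piergallini relation \eqref{eq:MP-flow}, the idempotent-KLR relations \eqref{eq:idem-KLR} and \eqref{eq:idem-KLR2}, the disc-elimination relations \eqref{eq:idem-disc} and \eqref{eq:idem-disc2}, the pitchfork moves \eqref{eq:idem-pitchfork}--\eqref{eq:idem-pitchfork3}, and the saddle-reversal \eqref{eq:saddle-reverse}. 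The accumulated scalars are products of $\scalar{\cdot,\cdot}$-factors which, by the multiplicativity and vanishing properties in \fullref{lemma:scalars}, cancel against the combinatorial sign prefactors to give exactly one. This is the same type of calculation already carried out at the end of the proof of \fullref{lemma:MM6}.

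For the variants of $\MM{7}$ that involve $(\text{R}2^-)$ or $(\text{R}3^-)$ moves, I would proceed as in \fullref{lemma:MM6}: expand each $(\text{R}3^-)$ chain map through the definition \eqref{eq:R3hard}--\eqref{eq:R3hard2} as a composite of $(\text{R}3^+)$ and $(\text{R}2^\pm)$ maps, and then use the following two shortcut principles to reduce the bookkeeping. Firstly, $(\text{R}2^\pm)$ maps that act on disjoint portions of the diagram far-commute, so inverse pairs introduced by the expansion of two adjacent $(\text{R}3^-)$ moves can be cut from the cycle. Secondly, any already-verified movie move (in particular $\MM{6}$, and also the $(\text{R}3^+)$ pentagon checked in \fullref{lemma:MM6}) can be invoked to replace a difficult subsegment of the composite by a shorter equivalent one. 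After these reductions, each remaining cycle restricts on simple resolutions to a small phase-diagram calculation of the same flavor as above.

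The main obstacle I expect is precisely this combinatorial bookkeeping: $\MM{7}$ involves several crossings, hence several colors $\wcolor{A},\wcolor{B},\wcolor{C}$ (and the derived colors $\wcolor{AB},\wcolor{BC},\wcolor{AC}$, etc.) on a relatively complicated simple resolution, and the $\epsilon$-signs from \fullref{lemma:induced-RM} together with the signs $\varepsilon,\tau$ from the $(\text{R}3^-)$-expansion \eqref{eq:R3hard2} must cancel cleanly against the combinatorial signs $(-1)^{|\complementtrans{\alpha}|}$ produced when applying \eqref{eq:idem-disc2} and \eqref{eq:saddle-reverse}. However, no genuinely new foam relation is required; the proof ultimately reduces to a finite number of phase-diagram simplifications of exactly the same nature as those already displayed in \fullref{lemma:MM6}, and invertibility of all participating homotopy equivalences ensures that the left-to-right and right-to-left composites are inverse to each other, so verifying one direction suffices.
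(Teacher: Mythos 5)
There is a genuine gap: you have misidentified which move $\MM{7}$ actually is. In the numbering used here (Bar-Natan's, see \fullref{fig:movie2}), $\MM{7}$ is the single-strand ``kink'' move: two Reidemeister~$1$ moves creating a pair of opposite kinks on one $a$-labeled strand, followed by a Reidemeister~$2^-$ move cancelling them. It involves no Reidemeister~$3$ moves, no second or third strand, and hence no label orderings $a\geq b\geq c$, no colors $\wcolor{B},\wcolor{C}$, and no expansion of $(\text{R}3^-)$ via \eqref{eq:R3hard}--\eqref{eq:R3hard2}; invoking $\MM{6}$ and far-commutation is beside the point. Your proposal is therefore a generic template aimed at the wrong composite, and it never engages the two ingredients that the actual verification hinges on: the Reidemeister~$1$ foams $F_1,G_1$ together with their normalization by the scalar $\scalar{A}$ fixed in \fullref{lemma:induced-RM}, and the evaluation of the resulting $\idem{A}$-colored sphere.

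Concretely, on the simple resolution the left-to-right composite is $\scalar{A}$ times a foam containing a closed $\idem{A}$-colored sphere; by \eqref{eq:idem-bubble} that sphere contributes $\scalar{A}^{-1}$, the two scalars cancel, and the remaining foam is isotopic to the identity (the other orientation/kink variants are analogous). Without identifying this $\scalar{A}$ versus sphere cancellation, the check that the scalar is exactly one --- which is the entire content of the lemma, given \fullref{proposition:funclink} --- is missing, so the proposal as written does not prove the statement.
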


\begin{proof}
For the version displayed in \fullref{fig:movie2}, 
we get the following behavior on simple resolutions:
\[
\scalar{A}\;
\xy
(0,0)*{\includegraphics[scale=1.15]{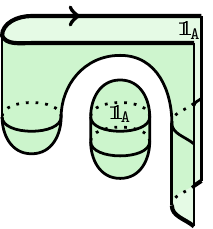}};
\endxy
\colon
\xy
\xymatrix{
\xy
(0,0)*{\includegraphics[scale=1.15]{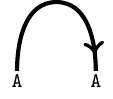}};
\endxy
\ar@<2pt>[r]^{F_1}
&
\xy
(0,0)*{\includegraphics[scale=1.15]{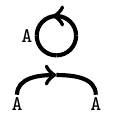}};
\endxy
\ar@<2pt>[r]^{\scalar{A}F_1}
\ar@<2pt>[l]^{\scalar{A}G_1}
&
\xy
(0,0)*{\includegraphics[scale=1.15]{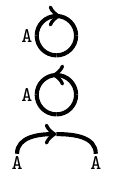}};
\endxy
\ar@<2pt>[r]^{G_2^-}
\ar@<2pt>[l]^{G_1}
&
\xy
(0,0)*{\includegraphics[scale=1.15]{figs/fig-3-51}};
\endxy
\ar@<2pt>[l]^{F_2^-}
}
\endxy
\]
The foam displayed on the left gives the 
left-to-right composition of chain maps on simple resolutions. 
The idempotent-colored sphere 
cancels with the scalar $\scalar{A}$ 
by \eqref{eq:idem-bubble}. The remaining foam is isotopic 
to the identity. The other 
variants have analogous proofs.
\end{proof}

\begin{lemma}\label{lemma:MM8}
The movie move $\MM{8}$ holds on simple resolutions.
\end{lemma}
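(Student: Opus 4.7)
The plan is to mirror the strategy used in the proofs of $\MM{6}$ and $\MM{7}$. First I would enumerate the essentially different variants of $\MM{8}$ by fixing orientations on the strands involved and by picking a relative ordering of the strand labels; all other cases will reduce to these via the symmetries already exploited in \fullref{remark:simple-res} and by reversing the roles of over/understrands. For each representative variant, I would restrict the chain maps corresponding to the two sides of the movie to a simple resolution as in \fullref{definition:nice-colors}, and then read the resulting composites as a sequence of idempotent-colored foams built out of the $F_i^{\pm}$ and $G_i^{\pm}$ constructed in \fullref{lemma:induced-RM} and \fullref{lemma:induced-RM3}, plus the Morse-type generators (cups, caps, saddles) occurring in the movie.

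After this set-up, the bulk of the work is an identity of foams in $\FoamS{N}$. I would translate the composite on each side into a \fcd as in \fullref{remark:flow-chart} and simplify using the relations collected in \fullref{subsec:equi-rel} and \fullref{subsec:special}: neck-cutting and sphere evaluation \eqref{eq:idem-bubble}, the specialized Matveev--Piergallini and associativity relations \eqref{eq:MP-flow}, the pitchfork relations \eqref{eq:idem-pitchfork}--\eqref{eq:idem-pitchfork3}, the ``saddle-reverse'' identities of \fullref{lemma:saddle-reverse}, and the coloring restriction in \fullref{lemma:admissibility}. The numerical scalars picked up from the rescaling conventions in \fullref{lemma:induced-RM} (the $\scalar{A}$'s and the signs $\epsilon,\tau$ appearing in Reidemeister rescalings) should be carried through the calculation and absorbed using the multiplicative identities in \fullref{lemma:scalars}. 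The endpoint of the simplification should be the identity foam on the simple resolution of the initial frame, which then certifies that both sides of the movie move act as the identity on that simple resolution; by \fullref{lemma:from-def-to-real} and \fullref{lemma:from-def-to-real2} this is what is needed to conclude.

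The hard part is going to be the variants in which an (R3--) or an (R2--) move appears, because, as in the second version of $\MM{6}$, the chain map assigned to (R3--) is defined only indirectly through the composite \eqref{eq:R3hard2} of (R2$\pm$) and (R3+) moves. For those variants I would adopt the same detour trick used in the proof of \fullref{lemma:MM6}: insert the definition of the (R3--) map, then use far-commutativity of distant Reidemeister/Morse moves together with already-proven movie moves (notably $\MM{9}$, invoked in the proof of $\MM{6}$ as a black box) to cut away the auxiliary scenes, leaving a cycle built entirely from (R2+), (R3+) and Morse maps whose composite can be verified directly on a simple resolution by a \fcd computation. The remaining variants with different relative orderings of labels, or with over/underpasses swapped, require only cosmetic changes, the chain of relations used being the same up to relabeling.

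Once one variant is checked, the cases of differing label orderings are handled by re-running the same chain of foam identities with the labels permuted; the relations in \fullref{subsec:equi-rel} and \fullref{subsec:special} are stated in sufficient generality that no substantive new computation is needed. This completes the reduction, via \fullref{lemma:from-def-to-real}, to the statement that $\MM{8}$ holds on simple resolutions, and hence non-trivially in $\Komh(\FoamSH)$.
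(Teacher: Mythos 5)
Your overall strategy---restrict both sides to a simple resolution, read the composites as idempotent-colored foams built from the Reidemeister foams of \fullref{lemma:induced-RM} and \fullref{lemma:induced-RM3}, simplify via \fcds using the relations of \fullref{subsec:equi-rel} and \fullref{subsec:special}, and track the normalization scalars---is exactly the paper's. But the proposal stops short of the step that actually proves the lemma. In the paper's proof the composite for $a\geq b$ is written out explicitly: it consists of an (R1) foam and its inverse, an (R2+) and an (R2--) foam, and one (R3+) foam which is \emph{trivial} because only two distinct labels occur; the crux is then that the scalar $\scalar{B}$ coming from the (R1) rescaling and the sign $(-1)^{b(a-b)}$ from the (R2) rescaling cancel precisely against the factors produced by deleting the $\purplebox$ annulus via \eqref{eq:idem-disc} and evaluating the resulting $\idem{B}$-colored sphere via \eqref{eq:idem-bubble}, leaving an identity foam. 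Saying that the scalars ``should be absorbed using \fullref{lemma:scalars}'' defers exactly this cancellation, and this cancellation \emph{is} the content of $\MM{8}$: it is the consistency check on the particular rescalings chosen for the (R1), (R2+) and (R2--) equivalences, so it cannot be waved through.

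Your identification of the ``hard part'' is also misdirected. No variant of $\MM{8}$ involves an (R3--) move: the two Reidemeister~$3$ strands coming from the kinked component are parallel arcs of a single strand, so every orientation variant produces an (R3+) configuration, and these (R3+) foams are essentially trivial since only two labels are involved --- this is why the paper can treat all variants uniformly (``an (R1) foam and its inverse, one essentially trivial (R3+) foam, and (R2+)/(R2--) foams in inverse pairs''). Moreover, the (R2--) equivalences are defined directly on simple resolutions in \fullref{lemma:induced-RM}; they are not defined through a composite, so no insertion of \eqref{eq:R3hard2}, no far-commutation argument, and no appeal to $\MM{9}$ is needed (that detour machinery is what the paper uses for the second version of $\MM{6}$, not for $\MM{8}$). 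Finally, $\MM{8}$ is one of the reversible movie moves and contains no Morse moves, so cups, caps and saddles do not enter at all.
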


\begin{proof}
The version of this movie move that is 
displayed in \fullref{fig:movie2} has the 
following behavior on simple resolutions if $a\geq b$:
\begin{gather*}
\xy
\xymatrix{
\xy
(0,0)*{\includegraphics[scale=1.15]{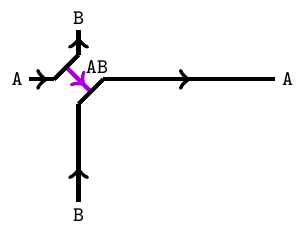}};
\endxy
\ar@<2pt>[r]^{\scalar{B}F_1}
&
\;\;
\xy
(0,0)*{\includegraphics[scale=1.15]{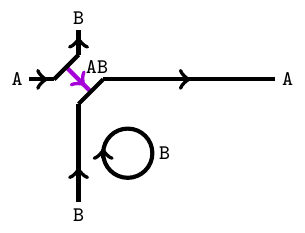}};
\endxy
\ar@<2pt>[r]^{\epsilon F_2^{+}}
\ar@<2pt>[l]^{G_1}
&
\xy
(0,0)*{\includegraphics[scale=1.15]{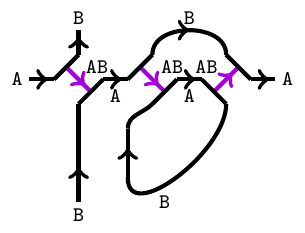}};
\endxy
\ar@<2pt>[l]^{G_2^{+}}
\ar@<2pt>[d]^{G_3^{+}}
\\
\xy
(0,0)*{\includegraphics[scale=1.15]{figs/fig-3-54}};
\endxy
\ar@<-2pt>[r]_{\scalar{B}F_1}
&
\xy
(0,0)*{\includegraphics[scale=1.15]{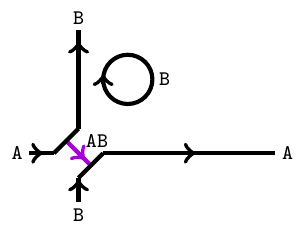}};
\endxy
\ar@<-2pt>[l]_{G_1}
\ar@<-2pt>[r]_{\epsilon F_2^{-}}
&
\xy
(0,0)*{\includegraphics[scale=1.15]{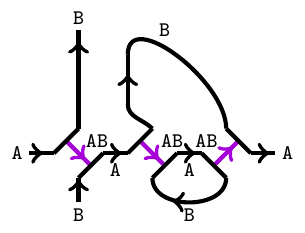}};
\endxy
\ar@<2pt>[u]^{F_3^{+}}
\ar@<-2pt>[l]_{G_2^{-}}
}
\endxy
\end{gather*}
Here $\epsilon=(-1)^{b(a-b)}$. Note that, due to the involvement of only two different labels, the foams $F_3^+$ and $G_3^+$ are trivial. The composite, thus, takes the following form.
\[
(-1)^{b(a-b)}\scalar{B}\;
\xy
(0,0)*{\includegraphics[scale=1.15]{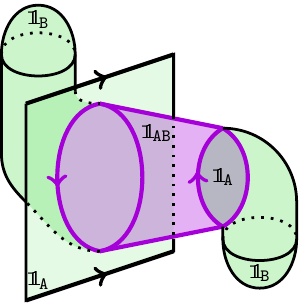}};
\endxy
\]
In this illustration we have omitted a trivial 
part to the foam. The sign $(-1)^{b(a-b)}$ and 
the scalar $\scalar{B}$ precisely cancel with 
the scalars appearing in \eqref{eq:idem-disc} 
and \eqref{eq:idem-bubble} when we delete the 
purple $\purplebox$ annulus and remove the resulting $\idem{B}$-colored 
sphere on the right hand side of the diagram. The result 
is isotopic to an identity foam. 

The reversed reading direction, the case $b\geq a$ and all other 
variations obtained by changing orientations are proved analogously. In each case, a 
foam of the above type (possibly with changed orientations and 
colorings) is simplified to give scalars as above. These 
precisely cancel with the normalizations of the Reidemeister 
foams since one always encounters an (R1) foam and its 
inverse, one essentially trivial (R3+) foam as well as 
(R2+) and (R2--) foams in inverse pairs.
\end{proof}

\begin{lemma}\label{lemma:MM9}
The movie move $\MM{9}$ holds on simple resolutions.
\end{lemma}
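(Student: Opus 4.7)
The proof will follow the template of the preceding movie move lemmas. First, by \fullref{remark:simple-res}, it suffices to check $\MM{9}$ on simple resolutions in $\Komh(\FoamSH)$: the scalar comparison then lifts via \fullref{lemma:from-def-to-real} and \fullref{lemma:from-def-to-real2} to the desired equality in $\Komh(\Foam{N})$. Moreover, since simple resolutions are insensitive to crossing signs up to grading shifts, and since reordering isomorphisms act as the identity by \fullref{convention:Koszul}, it suffices to treat a small number of variants distinguished by orientation classes and the relative size ordering of the labels on the participating strands.

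Second, I would fix one such variant and restrict every frame transition in the movie to the chosen simple resolution. Each Reidemeister step then becomes one of the idempotent-colored foams $F_i^{\pm}$, $G_i^{\pm}$ equipped with the scalar normalizations prescribed by \fullref{lemma:induced-RM} and \fullref{lemma:induced-RM3}---in particular the signs $\Sign{a}{b}$ arising from the (R2$\pm$) normalizations and the scalars $\scalar{A}$ from (R1)---while each Morse step becomes the obvious cup, cap or saddle foam.

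The heart of the proof will be to stack these foams into the composite foam of the entire movie and to simplify its \fcd in $\FoamS{N}$ using the calculus of \fullref{subsec:special}. The principal tools are the Matveev--Piergallini relations \eqref{eq:MP-flow}, the pitchfork relations \eqref{eq:idem-pitchfork}--\eqref{eq:idem-pitchfork3}, the idempotent bubble and sphere relations \eqref{eq:idem-bubble}, the colored blister and digon relations of \fullref{lemma:scalar-the-third}, and the reversal identities \eqref{eq:idem-disc2}--\eqref{eq:saddle-reverse} of \fullref{lemma:saddle-reverse}. The expected outcome is an identity foam on the simple resolution of the target web, so the proof reduces to verifying that every scalar $\scalar{\wcolor{A},\wcolor{B}}$, $\scalar{\wcolor{A}}$ and sign $\Sign{a}{b}$ picked up along the way cancels with a matching factor coming from the Reidemeister normalizations.

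The principal obstacle will be the sign-and-scalar bookkeeping, since $\MM{9}$ composes several Reidemeister moves whose individual foams each carry a nontrivial normalization. As in the proof of \fullref{lemma:MM6}, I expect the cancellations to be forced by the structural fact that each Reidemeister move in the movie is paired with its inverse, so the normalizations come in matching reciprocal pairs whose product is $1$. In any specific variant the required verification is a direct \fcd manipulation entirely analogous to the ones carried out for $\MM{6}$, $\MM{7}$ and $\MM{8}$, after which \fullref{lemma:from-def-to-real} and \fullref{lemma:from-def-to-real2} complete the argument.
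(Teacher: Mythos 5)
Your proposal takes essentially the same route as the paper: restrict the two variants of $\MM{9}$ (consisting of two (R2+) or two (R2--) moves) to simple resolutions and compose the normalized Reidemeister foams, checking that the scalars cancel and the composite is an identity foam. The paper's verification is in fact even lighter than the machinery you line up: the sign $\epsilon$ from the (R2$\pm$) scaling occurs twice in each composite and so cancels outright, and the resulting foam is isotopic to the identity by inspection, with no \fcd relations from \fullref{subsec:special} needed.
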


\begin{proof}
This move has essentially two variants, 
which consist of either two (R2+) or 
(R2--) moves. For $a\geq b$ they take the following form on simple resolutions:
\begin{align*}
\xy
\xymatrix{\!\!
\xy
(0,0)*{\includegraphics[scale=1.15]{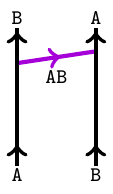}};
\endxy\!\!
\ar@<2pt>[r]^{\epsilon^{\prime} F_2^+}
&\!\!
\xy
(0,0)*{\includegraphics[scale=1.15]{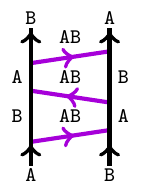}};
\endxy\!\!
\ar@<2pt>[r]^/.1cm/{\epsilon^{\prime} G_2^+}
\ar@<2pt>[l]^{\epsilon G_2^+}
&\!\!
\xy
(0,0)*{\includegraphics[scale=1.15]{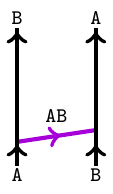}};
\endxy\!\!
\ar@<2pt>[l]^/-.05cm/{\epsilon F_2^+}
}
\endxy
\quad,\quad
\xy
\xymatrix{\!\!
\xy
(0,0)*{\includegraphics[scale=1.15]{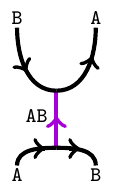}};
\endxy\!\!
\ar@<2pt>[r]^{\epsilon F_2^-}
&\!\!
\xy
(0,0)*{\includegraphics[scale=1.15]{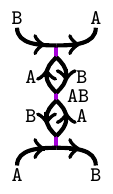}};
\endxy\!\!
\ar@<2pt>[r]^{\epsilon G_2^-}
\ar@<2pt>[l]^/-.05cm/{G_2^-}
&\!\!
\xy
(0,0)*{\includegraphics[scale=1.15]{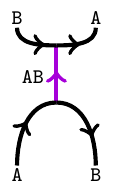}};
\endxy
\ar@<2pt>[l]^/-.05cm/{ F_2^-}
}
\endxy
\end{align*}
Here $\epsilon$ and $\epsilon^{\prime}$ are the
signs coming from the scaling of the Reidemeister foams, but which are irrelevant as they cancel 
in the composition. The composite foams are 
isotopic to identity foams, e.g. for the compositions 
from left to right we get:
\[
\xy
(0,0)*{\includegraphics[scale=1.15]{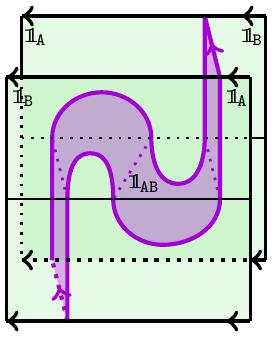}};
\endxy
\quad,\quad
\xy
(0,0)*{\includegraphics[scale=1.15]{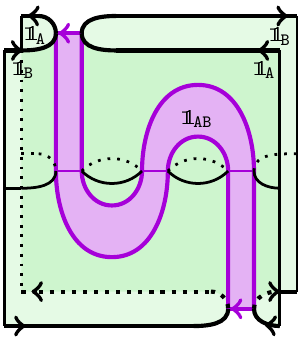}};
\endxy
\]
The case $a<b$ 
can be proven analogously.
\end{proof}

\begin{lemma}\label{lemma:MM10}
The movie move $\MM{10}$ holds on simple resolutions.
\end{lemma}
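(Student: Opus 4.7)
The plan is to follow the strategy established in the proofs of the earlier reversible movie moves, especially $\MM{6}$ and $\MM{9}$. As $\MM{10}$ is a reversible cycle of Reidemeister moves and planar isotopies, the associated chain map must agree with the identity on simple resolutions up to a scalar by \fullref{proposition:funclink}, and the task is to compute this scalar to be one after restricting along \fullref{definition:nice-colors}.

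First, I would enumerate the essentially different variants of $\MM{10}$ based on the orientations of the strands, the choice of crossing types, and the relative ordering of the color labels. By \fullref{remark:simple-res} many superficially different versions collapse to a single computation (in particular, positive and negative crossings may be exchanged since simple resolutions ignore $q$-degree, and crossing reordering isomorphisms act as the identity). For each remaining representative variant I would then: (i) label the arrows of the movie with the rescaled Reidemeister chain maps from \fullref{lemma:induced-RM}, \fullref{lemma:induced-RM3} and \eqref{eq:R3hard2}, carrying along the scalars $\scalar{A}$, $\scalar{A,B}$ and the signs $\epsilon = (-1)^{\min(a,b)(a-b)}$; (ii) restrict each map to the chosen simple resolution so that it becomes an idempotent-colored invertible foam in $\FoamS{N}$; (iii) compose to obtain a single composite foam; and (iv) simplify this composite using the phase diagram calculus and the relations from \fullref{subsec:equi-rel} and \ref{subsec:special}---principally the MP and pitchfork relations \eqref{eq:MP-flow}, \eqref{eq:idem-pitchfork}--\eqref{eq:idem-pitchfork3}, the disc and blister relations \eqref{eq:idem-disc2}, \eqref{eq:blister1} and \eqref{eq:colblister}, and the idempotent interactions \eqref{eq:idem-KLR} and \eqref{eq:idem-KLR2}. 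The goal is to verify that every scalar produced in (iv) cancels precisely against the normalization factors accumulated in (i), leaving the identity foam.

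The main obstacle, mirroring the situation of $\MM{6}$, is the handling of variants that involve (R3--) moves: each such move must first be expanded via the template \eqref{eq:R3hard} into a composition of two (R2--), two (R2+) and one (R3+) move. Without further simplification, the resulting composite foam is prohibitively complex. To keep the computation tractable, I would apply the shortcut strategy used in the proof of $\MM{6}$: far-commute moves that act on disjoint portions of the diagram, cancel inverse pairs of (R2$\pm$) moves, and excise detours through auxiliary tangle diagrams by invoking the already established movie move $\MM{9}$ from \fullref{lemma:MM9}. Once the cycle has been trimmed in this way, the remaining verification reduces to a comparison of two (R3+) foams, whose composite can be simplified to the identity by exactly the calculation carried out in the second half of the proof of \fullref{lemma:MM6}.

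Beyond this bookkeeping of scalars and the (R3--) expansion, no new foam relation or conceptual ingredient is required: the proof is, by design, a routine---if tedious---application of the phase diagram relations already assembled in \fullref{sec:glnfoams}, together with \fullref{lemma:scalars} to simplify the resulting products of $\scalar{\cdot,\cdot}$. The inverse reading of the movie then automatically gives the identity as well, since the composite is built from homotopy equivalences.
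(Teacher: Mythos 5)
There is a genuine gap. Your plan never engages with the actual core of $\MM{10}$: this move is a cycle of \emph{eight} Reidemeister~$3$ moves on \emph{four} strands, so even after all trimming the essential check is that the composite of four $G_3^+$ foams followed by four $F_3^+$ foams, restricted to the simple resolution with labels $a\geq b\geq c\geq d$ and colors $\wcolor{A},\wcolor{B},\wcolor{C},\wcolor{D}$, is the identity. Your claim that after far-commutation and cancellations the verification ``reduces to a comparison of two (R3+) foams, whose composite can be simplified to the identity by exactly the calculation carried out in the second half of the proof of \fullref{lemma:MM6}'' is false: the $\MM{6}$ computation lives on three strands, while the $\MM{10}$ composite is a genuinely new four-strand phase-diagram computation. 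In the paper this is a multi-step simplification (first separating the outer purple seams, then repeatedly detaching facets colored by unions such as $\wcolor{\AandC}=\wcolor{AB}\cup\wcolor{CD}$, with scalars combined via \fullref{lemma:scalars}) that produces intermediate identities not appearing in any earlier lemma; asserting it is ``routine'' and subsumed by $\MM{6}$ skips the main content of the lemma.

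There is also a structural difference in how the variants are handled. The paper does not enumerate and check the many versions of $\MM{10}$ (in particular it never expands (R3--) moves via \eqref{eq:R3hard} inside $\MM{10}$): it invokes the argument of Clark--Morrison--Walker, which shows that all variants are equivalent, modulo far-commutation and the already established $\MM{6}$, to the single displayed variant built entirely from (R3+) moves, and then only that one variant is computed for $a\geq b\geq c\geq d$. Your direct enumeration, with each (R3--) occurrence expanded into five moves and then trimmed by hand using $\MM{9}$, is not obviously wrong in principle, but it is a different and vastly more laborious route, and as written it leans on the incorrect reduction to the $\MM{6}$ calculation rather than supplying the four-strand computation that any variant ultimately requires.
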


\begin{proof}
This movie move exists in a large number of variants, all of which 
are equivalent modulo far-commutation and the already 
established $\MM{6}$ by a beautiful argument 
of Clark--Morrison--Walker \cite[Proof of $\MM{10}$]{CMW}. This 
directly extends to the colored case and we only need to check the 
variant of $\MM{10}$ displayed in \fullref{fig:movie2} for 
$a\geq b\geq c\geq d$. It is given on simple resolutions as the 
following composite starting at the top left diagram:
\begin{gather*}
\xy
\xymatrix{
\xy
(0,.75)*{\begin{tikzpicture}[anchorbase,scale=1.15]
		\draw [dotted] (-.9,-1) to (.9,-1);
		\node at (0,0) {\includegraphics[scale=1.15]{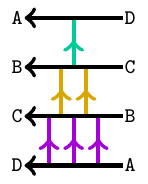}}; 
\end{tikzpicture}};
\endxy
\!
\ar@<2pt>[r]^{G_3^+}
&\!
\xy
(0,0)*{\includegraphics[scale=1.15]{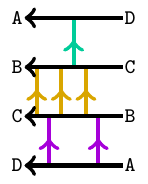}};
\endxy\!
\ar@<2pt>[r]^{G_3^+}
&\!
\xy
(0,0)*{\includegraphics[scale=1.15]{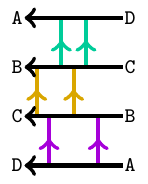}};
\endxy\!
\ar@<2pt>[r]^{G_3^+}
&\!
\xy
(0,0)*{\includegraphics[scale=1.15]{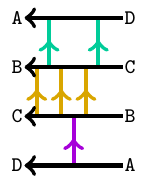}};
\endxy\!
\ar@<2pt>[dr]^{G_3^+}
&\!
\\
&\!
\xy
(0,0)*{\includegraphics[scale=1.15]{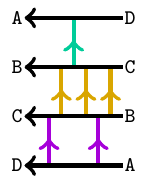}};
\endxy\!
\ar@<2pt>[ul]^{F_3^+}
&\!
\xy
(0,0)*{\includegraphics[scale=1.15]{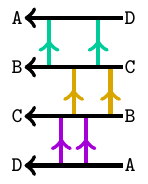}};
\endxy\!
\ar@<2pt>[l]^{F_3^+}
&\!
\xy
(0,0)*{\includegraphics[scale=1.15]{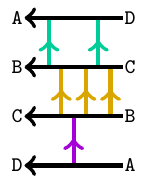}};
\endxy\!
\ar@<2pt>[l]^{F_3^+}
&\!
\xy
(0,0)*{\includegraphics[scale=1.15]{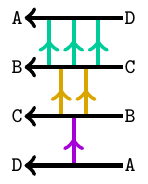}};
\endxy
\ar@<2pt>[l]^{F_3^+}
}
\endxy
\end{gather*}
The composite of the first four maps $G_3^+$ is given by the following foam:
\begin{equation}
\label{eq:MM10foam}
\xy
(0,0)*{\includegraphics[scale=1.15]{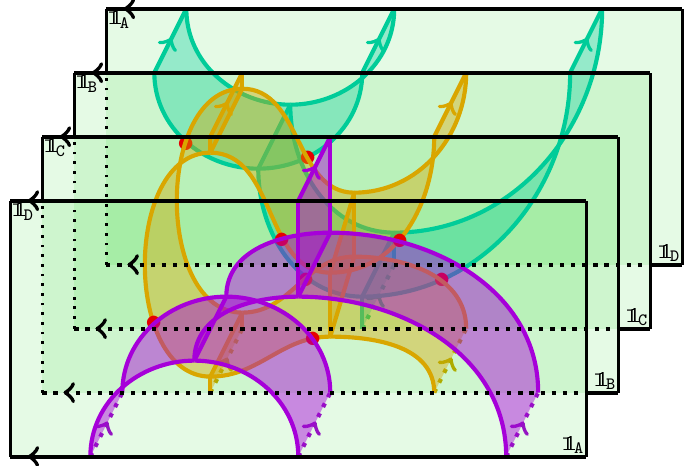}};
\endxy
\;,\;
\xy
(0,0)*{\includegraphics[scale=1.15]{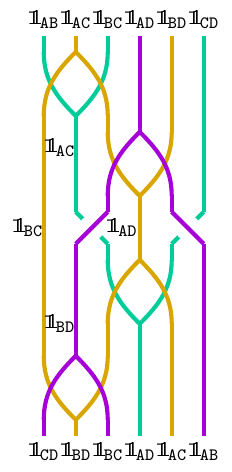}};
\endxy
\end{equation}
The composite of the remaining four maps $F_3^+$ is given by the foam obtained from the one above by reflecting in a line perpendicular to the green facets. The complete movie is thus represented by a \fcd\!\!, which is symmetric in the origin of $\R^2$, the lower half of which is displayed in \eqref{eq:MM10foam}. It remains to show that this \fcd represents the identity foam. 
As a first step, we focus on the interaction of 
purple $\purplebox$ and golden 
$\goldenbox$ facets on the second green 
upright plane. This allows us to draw orientations 
on the \fcd and the following simplification:

\begin{equation}
\label{eq:MM10purple}
\xy
(0,0)*{\includegraphics[scale=1.15]{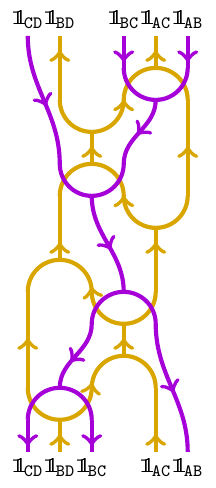}};
\endxy
\!\!\!\!=\!\!\!\!
\xy
(0,0)*{\includegraphics[scale=1.15]{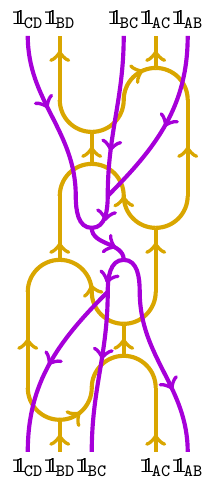}};
\endxy
\!\!\!\!\!\!\!\!= 
\xy
(0,0)*{\displaystyle\frac{1}{\scalar{BC,AB}}};
(0,-6.5)*{\scalar{CD,AC}};
\endxy
\!\!\!\!
\xy
(0,0)*{\includegraphics[scale=1.15]{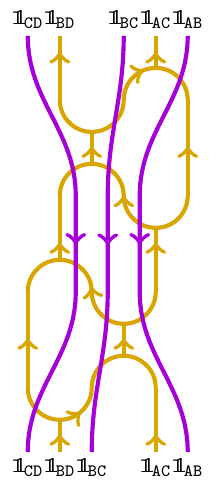}};
\endxy
\!\!\!\!\!\!\!\!
= \frac{\scalar{AB,BD}}{\scalar{BC,AB}}
\xy
(0,0)*{\includegraphics[scale=1.15]{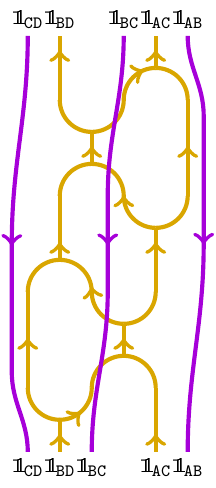}};
\endxy
\end{equation}
The outer two purple $\purplebox$ seams are completely separated 
from the rest of the diagram and will not be displayed in subsequent computations.

In the second step, we aim to simplify the golden $\goldenbox$ 
facets of the foam. For this we 
observe that can locally apply relations of the form:
\begin{equation}\label{eq:MM10-calc2}
\xy
(0,0)*{\includegraphics[scale=1.15]{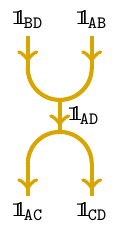}};
\endxy
= \scalar{BC, Y}
\xy
(0,0)*{\includegraphics[scale=1.15]{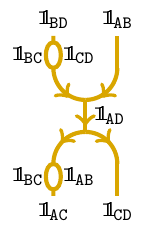}};
\endxy
= \scalar{BC, Y}
\xy
(0,0)*{\includegraphics[scale=1.15]{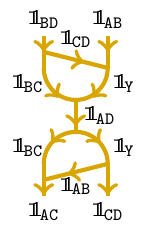}};
\endxy
=
\xy
(0,0)*{\includegraphics[scale=1.15]{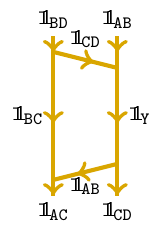}};
\endxy
\end{equation}
Here $\wcolor{\AandC}=\wcolor{AB}\cup\wcolor{CD}$.
After pushing the cyan $\cyanbox$ facets outwards in a manner analogous to the first step in \eqref{eq:MM10purple}, we can perform the final sequence of simplifications 
which we need. For this, we start with two applications of relations of type \eqref{eq:MM10-calc2} 
(with parallel running purple $\purplebox$ and cyan $\cyanbox$
strands in the middle):
\begin{gather*}
\frac{\scalar{AB,BD}}{\scalar{BC,AB}}
\xy
(0,0)*{\includegraphics[scale=1.15]{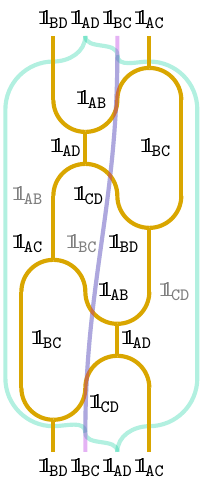}};
\endxy
= \frac{\scalar{AB,BD}}{\scalar{BC,AB}}
\xy
(0,0)*{\includegraphics[scale=1.15]{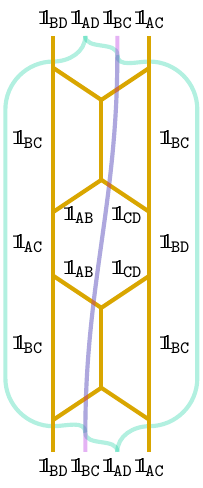}};
\endxy
\xy
(0,0)*{\displaystyle\frac{\scalar{AB,CD}}{\scalar{AB,BC}}};
(0,-6.5)*{\scalar{CD,BC}};
\endxy
\xy
(0,0)*{\includegraphics[scale=1.15]{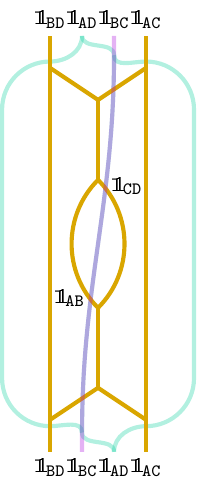}};
\endxy
\\
\xy
(0,0)*{\displaystyle\frac{1}{\scalar{CD,BC}}};
(0,-6.5)*{\scalar{AB,BC}};
\endxy
\xy
(0,0)*{\includegraphics[scale=1.15]{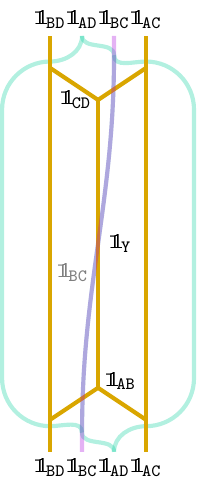}};
\endxy
= \xy
(0,0)*{\displaystyle\frac{\scalar{CD,AB}}{\scalar{BC,CD}}};
(0,-6.5)*{\scalar{AB,BC}};
\endxy
\xy
(0,0)*{\includegraphics[scale=1.15]{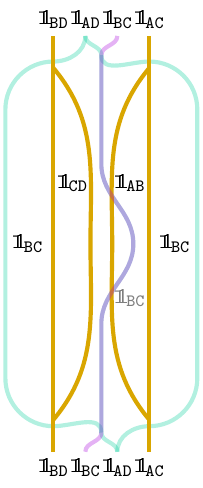}};
\endxy
=\frac{(-1)^{(a-b)(b-c)}}{\scalar{CD,AB}}
\xy
(0,0)*{\includegraphics[scale=1.15]{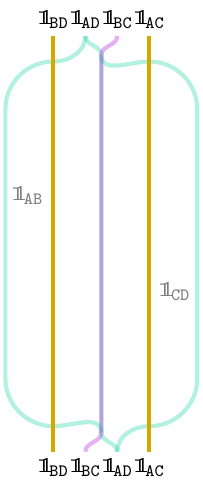}};
\endxy
\end{gather*}
In these steps have indicated the interaction of 
golden $\goldenbox$ facets with purple $\purplebox$ and cyan $\cyanbox$ ones, 
but we do not draw orientations.
 
The simplification use the \fcd relations 
from \fullref{subsec:special}, which also 
exhibit the last diagram as an identity foam in disguise.  
\end{proof}

We skip $\MM{11}$ as this just encodes an isotopy relation. For the remaining non-reversible movie moves, we denote the chain maps corresponding to cups, saddles and caps by $M_0$, $M_1$ and $M_2$.
 
\begin{lemma}\label{lemma:MM12}
The movie move $\MM{12}$ holds on simple resolutions.
\end{lemma}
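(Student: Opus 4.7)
The plan is to follow the template established in the proofs of Lemmas~\ref{lemma:MM6}--\ref{lemma:MM10}. Using Lemma~\ref{lemma:from-def-to-real2} and Lemma~\ref{lemma:from-def-to-real}, it suffices to verify that the chain maps associated to the two sides of $\MM{12}$ agree on a simple resolution of the initial tangle diagram and are non-zero. The simple resolutions of the frames appearing in $\MM{12}$ are computed via Lemma~\ref{lemma:thecomplexes} and Example~\ref{example:thecomplexes}; each Reidemeister cobordism restricts to an idempotent-colored, invertible foam in $\FoamS{N}$ as identified in Lemmas~\ref{lemma:induced-RM} and~\ref{lemma:induced-RM3}, with the scaling scalars $\scalar{A}$, $\epsilon=(-1)^{\min(a,b)(a-b)}$ and $\tau,\varepsilon$ from Section~\ref{subsec:simple-complexes} explicit. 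The Morse maps $M_0,M_1,M_2$ act on each summand by the corresponding elementary foam from~\eqref{eq:foamgen1}, equipped with the appropriate idempotent decoration induced from the neighboring web edges.

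I would then draw the two composite foams as \fcds\!\! in the sense of Remark~\ref{remark:flow-chart}, with the Morse handle merged into the idempotent-colored surface produced by the Reidemeister cobordisms. The reduction proceeds by pushing the Morse critical point through the seam structure using the pitchfork moves~\eqref{eq:idem-pitchfork}--\eqref{eq:idem-pitchfork3}, MP and associativity~\eqref{eq:MP-flow}, and then eliminating the resulting bubbles, blisters and annuli via the idempotent sphere/bubble relations~\eqref{eq:idem-bubble}, the colored blister collapse of Lemma~\ref{lemma:scalar-the-third}, and the annulus cancellations in Lemma~\ref{lemma:saddle-reverse}. Each such simplification produces a scalar from Lemma~\ref{lemma:scalars}, and the scalars introduced by the normalization of the Reidemeister foams in Section~\ref{subsec:simple-complexes} were chosen precisely so that these scalars cancel against the ones arising from the simplification. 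After these cancellations, both composite foams reduce, up to planar isotopy, to the elementary Morse foam corresponding to the net Morse move in $\MM{12}$, which is manifestly non-zero.

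The main obstacle will be the bookkeeping of signs and scalars. On the one hand, there are the $\scalar{A}$-factors coming from (R1) normalizations, the $\epsilon$-signs from (R2$\pm$) normalizations, and, where (R3$-$) enters, the compound scaling $\varepsilon,\tau$ coming from the expansion~\eqref{eq:R3hard2}. On the other hand, reordering the Morse map past Reidemeister maps introduces Koszul signs by Convention~\ref{convention:Koszul}, and reducing \fcds\!\! through the relations~\eqref{eq:idem-disc2}--\eqref{eq:saddle-reverse} produces further signs $(-1)^{b(a-b)}$ and ratios $\scalar{X,B}/\scalar{A\setminus X,X}$. The delicate step is checking that all of these multiply to $+1$ simultaneously in every variant of $\MM{12}$ obtained by changing orientations, crossing types and heights of strands.

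Finally, as in the proofs of the preceding lemmas, I expect that only a few representative variants need to be computed by hand: the remaining ones follow either by the symmetry arguments used for $\MM{9}$ (where (R2$\pm$) pairs cancel in the composition) or by the Clark--Morrison--Walker-type reduction cited in the proof of Lemma~\ref{lemma:MM10}, which reduces variants modulo far-commutation and $\MM{6}$. Once the non-triviality is confirmed on a single simple resolution, Lemmas~\ref{lemma:from-def-to-real} and~\ref{lemma:from-def-to-real2} promote the identity to $\Komh(\Foam{N})$, completing the verification of $\MM{12}$.
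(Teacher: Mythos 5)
Your proposal follows the paper's approach: restrict to simple resolutions via \fullref{lemma:from-def-to-real} and \fullref{lemma:from-def-to-real2}, write out both composites of Morse and Reidemeister foams with their normalization scalars, and check they agree and are non-zero. The paper's actual verification of $\MM{12}$ is even simpler than the machinery you anticipate: on simple resolutions both movies are composites of Morse maps with the single scalar $\scalar{A}$ appearing identically on both sides, and the two composite foams differ only by isotopies, so no phase-diagram reductions, pitchfork, MP or blister relations are needed.
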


\begin{proof}
The movie on left-hand side in \fullref{fig:movie3} is given on simple resolutions by:
\begin{align*}
\xy
\xymatrix{
\emptyset\quad\ar@<2pt>[r]^/-.25cm/{M_0}
&
\quad
\xy
(0,0)*{\includegraphics[scale=1.15]{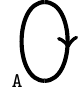}};
\endxy
\quad
\ar@<2pt>[r]^/-.05cm/{M_0}
\ar@<2pt>[l]^/.25cm/{M_2}
&
\quad
\xy
(0,0)*{\includegraphics[scale=1.15]{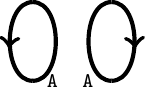}};
\endxy
\ar@<2pt>[l]^{\scalar{A} M_2}
}
\endxy
\end{align*}
From the right-hand side we get:
\begin{align*}
\xy
\xymatrix{
\emptyset\quad\ar@<2pt>[r]^/-.25cm/{M_0}
&
\quad
\xy
(0,0)*{\includegraphics[scale=1.15]{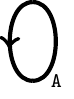}};
\endxy
\quad
\ar@<2pt>[r]^/-.05cm/{M_0}
\ar@<2pt>[l]^/.25cm/{M_2}
&
\quad
\xy
(0,0)*{\includegraphics[scale=1.15]{figs/fig-3-93}};
\endxy
\ar@<2pt>[l]^{\scalar{A}M_2}
}
\endxy
\end{align*}
These composites differ from the former ones only by isotopies.
\end{proof}

\begin{lemma}\label{lemma:MM13}
The movie move $\MM{13}$ holds on simple resolutions.
\end{lemma}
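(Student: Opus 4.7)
The plan is to follow the same template as the proofs of $\MM{7}$, $\MM{8}$ and $\MM{12}$: rewrite the two sides of $\MM{13}$ as composites of the basic chain maps $M_0$, $M_1$, $M_2$ associated to cups, saddles and caps, together with the Reidemeister chain maps from \fullref{subsec:simple-complexes}, and then check that after restricting to simple resolutions the two resulting composite foams are equal, and non-zero. By \fullref{lemma:from-def-to-real} and \fullref{lemma:from-def-to-real2}, this will establish $\MM{13}$ in $\Komh(\Foam{N})$.

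First I would list the idempotent-colored simple resolutions of all frames in the movie, using \fullref{lemma:thecomplexes}. Since $\MM{13}$ involves a crossing together with Morse-type cobordisms (a birth and a saddle merging the new circle into the existing strand, or analogous), the presence of the crossing forces idempotent labels $\wcolor{A},\wcolor{B}\subset\alphS$ on the two strands, and the favourite resolution from \fullref{definition:nice-colors} determines a canonical one in which all computations can be carried out. Next, I would write down the composite foam on each side as a stacked sequence of the local foams from \eqref{eq:foamgen1} (for $M_0$, $M_1$, $M_2$) and the Reidemeister foams from \fullref{fig:RMcomplexes} and \fullref{fig:RM3standard}, together with their scalar normalizations $\scalar{A}$ (from (R1)) or signs $\epsilon$ (from (R2$\pm$)). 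As in the proof of $\MM{8}$, the key simplification step will consist of recognizing that the saddle, when composed with the nearby Reidemeister foam, creates either an idempotent-colored sphere or a blister, which can be evaluated via \eqref{eq:idem-bubble}, \eqref{eq:idem-disc} or \eqref{eq:idem-disc2}. The resulting scalars will cancel precisely against the normalization factors of the Reidemeister foams, leaving a foam that is isotopic to the one obtained on the other side of the movie move.

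More concretely, on one side the movie performs a birth producing a small idempotent-colored circle of color $\wcolor{A}$, a Reidemeister move interacting this circle with a nearby strand (yielding a factor $\scalar{A}$ or a sign $\epsilon$), and then a saddle killing the circle. On the other side, after a suitable isotopy, the corresponding foam has no circle at all. The task is to verify that the extra \textit{loop} on the first side contributes exactly the inverse of the scalar collected during the Reidemeister move. This is the same cancellation phenomenon as in the proof of $\MM{8}$: the idempotent-colored annulus/sphere produced after cutting produces $\scalar{A}^{-1}$ by \eqref{eq:idem-bubble}, which cancels the $\scalar{A}$ picked up from the (R1) normalization; signs from (R2$\pm$) cancel against signs from \eqref{eq:idem-disc}.

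The main obstacle is purely bookkeeping: keeping track of the orientations on seams, the idempotent colorings on each facet, and the total scalar. In particular, if a variant of $\MM{13}$ uses an (R2--) move, then the saddle and cap/cup interact with the idempotent-colored foams of \fullref{lemma:saddle-reverse}, which produces the ratio $\scalar{X,B}/\scalar{A\setminus X,X}$ rather than a pure scalar; one must verify that this ratio is absorbed by the blister collapse \eqref{eq:colblister} applied after an application of the pitchfork move \eqref{eq:idem-pitchfork2}. Once this accounting is done for one orientation pattern and one relative ordering of the colors, the remaining variants follow by the same argument with strand heights and orientations reversed, exactly as indicated in \fullref{remark:simple-res}.
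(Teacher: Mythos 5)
Your overall template is the right one and is exactly the paper's: restrict to simple resolutions, write each side of the movie as a composite of Morse foams and (where they occur) normalized Reidemeister foams, check that the two composites are equal and non-zero, and conclude via \fullref{lemma:from-def-to-real} and \fullref{lemma:from-def-to-real2}. However, the concrete content you ascribe to $\MM{13}$ does not match the move, and as a result the step you present as the heart of the proof addresses a configuration that does not occur. In $\MM{13}$ both columns consist of a birth followed by a saddle performed near a spectator crossing (which is why the one-dimensionality argument in \fullref{proposition:funclink} needs the crossing expanded); no Reidemeister frame appears in either column, so no normalization factor $\scalar{A}$ from (R1), no sign $\epsilon$ from (R2$\pm$), and none of the ratios from \fullref{lemma:saddle-reverse} enter the composites. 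In particular your central claim --- that one side carries an extra idempotent-colored loop whose evaluation via \eqref{eq:idem-bubble}, \eqref{eq:idem-disc} or a blister collapse must cancel a scalar collected during a Reidemeister move, while the other side ``has no circle at all'' --- describes the mechanism of $\MM{7}$, $\MM{8}$ or $\MM{14}$, not of $\MM{13}$: here both sides contain the birth/saddle pair, the only scalar in sight (the $\scalar{A}$ on the death map in the reversed reading) occurs identically on both sides, and there is nothing for it to cancel against.

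What the actual check amounts to, and what the paper does, is much simpler than you anticipate: on simple resolutions the crossing resolves to a web, both composites are $M_1\circ M_0$ (respectively $\scalar{A}M_2$ composed with $M_1$ in the reversed reading, with the same scalar on both sides), and the two composite foams differ only by an isotopy; the crossing-sign and orientation variants are handled by \fullref{remark:simple-res} rather than by any (R2--)/pitchfork bookkeeping. So your method would succeed if applied to the correct frames, but as written the proposal neither identifies those frames nor performs the (one-line) isotopy comparison, and the cancellation analysis it does spell out would have nothing to attach to.
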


\begin{proof}
Again, we compare the movies on the 
left- and right-hand sides in \fullref{fig:movie3} on simple resolutions 
as follows:
\begin{gather*}
\xy
\xymatrix{\!\!
\xy
(0,0)*{\includegraphics[scale=1.15]{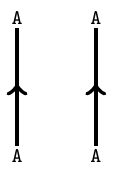}};
\endxy
\quad
\ar@<2pt>[r]^/-.1cm/{M_0}
&\quad
\xy
(0,0)*{\includegraphics[scale=1.15]{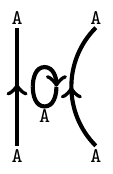}};
\endxy\quad
\ar@<2pt>[r]^/.15cm/{M_1}
\ar@<2pt>[l]^/.05cm/{\scalar{A}M_2}
&\quad
\xy
(0,0)*{\includegraphics[scale=1.15]{figs/fig-3-95}};
\endxy\!\!
\ar@<2pt>[l]^/-.15cm/{M_1}
}
\endxy
\\
\raisebox{0.001cm}{\xy
\xymatrix{\!\!
\xy
(0,0)*{\includegraphics[scale=1.15]{figs/fig-3-95}};
\endxy\quad
\ar@<2pt>[r]^/-.1cm/{M_0}
&\quad
\xy
(0,0)*{\includegraphics[scale=1.15]{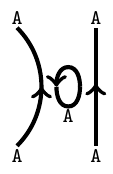}};
\endxy\quad
\ar@<2pt>[r]^/.15cm/{M_1}
\ar@<2pt>[l]^/.05cm/{\scalar{A}M_2}
&\quad
\xy
(0,0)*{\includegraphics[scale=1.15]{figs/fig-3-95}};
\endxy\!\!
\ar@<2pt>[l]^/-.15cm/{M_1}
}
\endxy}
\end{gather*}
These foams differ only by isotopies. 
\end{proof}

\begin{lemma}\label{lemma:MM14}
The movie move $\MM{14}$ holds on simple resolutions.
\end{lemma}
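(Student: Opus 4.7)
The plan is to follow the template established for $\MM{12}$ and $\MM{13}$: restrict to simple resolutions via \fullref{lemma:from-def-to-real}, unfold both sides of $\MM{14}$ into a single composite of idempotent-colored foams in $\FoamS{N}$, and check that the two composites agree by local foam calculus. Once this is done, \fullref{lemma:from-def-to-real2} transports the equality back to $\Komh(\Foam{N})$, completing the verification.

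Concretely, I would list the frames of each movie, apply \fullref{lemma:thecomplexes} at every crossing so that intermediate diagrams are replaced by their simple resolutions (\fullref{definition:nice-colors}), and assemble the induced morphism on simple resolutions frame by frame. The Morse cobordisms contribute the unlabeled foams $M_0$, $M_1$, $M_2$ acting diagonally on all idempotent-colored summands, while Reidemeister cobordisms contribute the normalized foams of \fullref{lemma:induced-RM} and \fullref{lemma:induced-RM3} together with their scalar prefactors $\scalar{A}$ and the signs $\epsilon$. The composite on each side is then a single foam between identical idempotent-colored webs, which I would simplify by phase-diagram manipulation (\fullref{remark:flow-chart}) using the MP and pitchfork relations \eqref{eq:MP-flow}--\eqref{eq:idem-pitchfork3}, together with the disc, sphere, blister, and annulus identities from \fullref{lemma:scalar-the-first}--\fullref{lemma:saddle-reverse}. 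To handle the large number of orientation and height variants of $\MM{14}$, I would---as in the proof of $\MM{10}$---combine far-commutation with the already-established $\MM{6}$--$\MM{13}$ to collapse the problem to a single representative variant; the all-equal-labels case from \fullref{example:this-is-easy!} provides a useful sanity check in which both composites are manifestly identity foams.

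The main obstacle will be the bookkeeping of scalars. By \fullref{proposition:funclink} the two composites are already known to be proportional, and one checks they are individually non-zero on simple resolutions---each is built from invertible Reidemeister foams, possibly followed by a Morse cobordism whose restriction to idempotent-colored summands is non-zero---so the content of the lemma is purely the assertion that the proportionality constant equals $1$. This reduces to a single rational identity in the symbols $\scalar{X,Y}$ for the subsets $X, Y \subset \alphS$ appearing in the representative variant, and the delicacy lies in the asymmetry $\scalar{Y,X} = (-1)^{|X||Y|}\scalar{X,Y}$ of \fullref{lemma:scalars} combined with the asymmetric (R1) normalization prescribed in \fullref{lemma:induced-RM}---precisely the mechanism by which scalars cancel in $\MM{7}$ and $\MM{8}$. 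The integral version follows automatically from \fullref{lemma:integral}, since the foams involved are then integral and the scalar is $1$.
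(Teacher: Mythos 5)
Your plan follows essentially the same route as the paper: restrict both movies to their simple resolutions and compare the resulting composites of normalized Morse and Reidemeister foams, using \fullref{lemma:from-def-to-real} and \fullref{lemma:from-def-to-real2} to transport the conclusion back to $\Komh(\Foam{N})$. The actual check is much lighter than you anticipate, though: $\MM{14}$ consists of a birth $M_0$ followed by an (R2+) move on one side and an (R2--) move on the other, the normalization signs $\epsilon$ from \fullref{lemma:induced-RM} are literally the same on both sides, and the two composite foams are isotopic, so none of the $\scalar{X,Y}$-identities, (R1) normalizations, or phase-diagram manipulations you flag as the delicate point actually enter this move.
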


\begin{proof}
As above, we compare the movies on the left- and right-hand sides in Figure 12
on simple resolutions for $a\geq b$:
\[
\xy
\xymatrix{
\xy
(0,0)*{\includegraphics[scale=1.15]{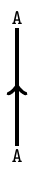}};
\endxy
\ar@<2pt>[r]^/-.1cm/{M_0}
&
\xy
(0,0)*{\includegraphics[scale=1.15]{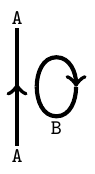}};
\endxy\,
\ar@<2pt>[r]^/-.1cm/{\epsilon F_2^+}
\ar@<2pt>[l]^/.1cm/{M_2}
&\,
\xy
(0,0)*{\includegraphics[scale=1.15]{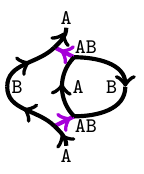}};
\endxy
\ar@<2pt>[l]^/.1cm/{G_2^+}
}
\endxy
\quad,\quad
\raisebox{0.1cm}{\xy
\xymatrix{
\xy
(0,.75)*{\includegraphics[scale=1.15]{figs/fig-3-98}};
\endxy
\ar@<2pt>[r]^/-.1cm/{M_0}
&\,
\xy
(0,0)*{\includegraphics[scale=1.15]{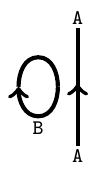}};
\endxy
\ar@<2pt>[r]^/-.1cm/{\epsilon F_2^-}
\ar@<2pt>[l]^/.1cm/{M_2}
&\,
\xy
(0,0)*{\includegraphics[scale=1.15]{figs/fig-3-100}};
\endxy
\ar@<2pt>[l]^/.1cm/{G_2^-}
}
\endxy}
\] 
The scalars are identical and the foams 
are isotopic, e.g. reading left-to right gives:
\[
\reflectbox{
\xy
(0,0)*{\includegraphics[scale=1.15]{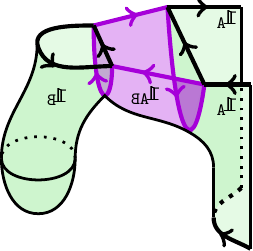}};
\endxy
}
\quad,\quad
\reflectbox{
\xy
(0,0)*{\includegraphics[scale=1.15]{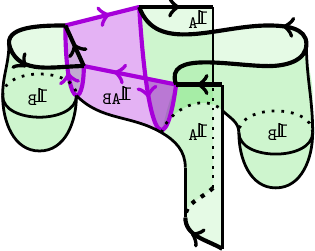}};
\endxy
}
\]
The case $b\geq a$ and a second variant with different relative orientations is proved analogously.
\end{proof}

\begin{lemma}\label{lemma:MM15}
The movie move $\MM{15}$ holds on simple resolutions.
\end{lemma}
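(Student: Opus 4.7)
The plan is to apply the same strategy used for $\MM{12}$, $\MM{13}$ and $\MM{14}$, proved respectively in Lemmas \ref{lemma:MM12}, \ref{lemma:MM13} and \ref{lemma:MM14}. By \fullref{lemma:from-def-to-real}, it suffices to verify $\MM{15}$ non-trivially on simple resolutions, where all Reidemeister and Morse foams are explicitly available through Lemmas \ref{lemma:induced-RM} and \ref{lemma:induced-RM3}, together with the cup, saddle and cap foams $M_0$, $M_1$ and $M_2$.

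First I would identify the two movies on either side of $\MM{15}$ in \fullref{fig:movie3} and write down the corresponding composite chain maps on simple resolutions. Each frame change contributes an explicit foam: the Morse foams $M_0$, $M_1$, $M_2$ for cup, saddle and cap transitions, together with the idempotent-decorated Reidemeister foams for Reidemeister-type transitions, each carrying the prescribed scalar normalization (the signs $\epsilon$ and the bubble factors $\scalar{A}$ fixed in \fullref{subsec:simple-complexes}).

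The next step is to compare the two resulting composite foams variant by variant. As in the proof of \fullref{lemma:MM14}, I would simplify both sides using the \fcd relations from Sections \ref{subsec:equi-rel} and \ref{subsec:special}, in particular the MP relations \eqref{eq:MP-flow}, the pitchfork relations \eqref{eq:idem-pitchfork}--\eqref{eq:idem-pitchfork3} and the bubble-removal identity \eqref{eq:idem-bubble}. Consistent with the pattern seen in $\MM{12}$--$\MM{14}$, I expect all scalar normalizations to cancel pairwise between the two sides, leaving two foams that are manifestly related by a planar isotopy of their \fcds and hence equal in $\FoamS{N}$.

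The main obstacle will be the bookkeeping of the several variants of $\MM{15}$, coming from different crossing types, different relative orientations of the strands and different orderings of the labels involved. As in \fullref{lemma:MM10}, I expect that most of these variants reduce to a single representative via far-commutativity together with the already established movie moves $\MM{6}$--$\MM{14}$, so that only a small number of genuinely distinct configurations require a direct foam computation in the style of \fullref{lemma:MM14}. Once these representative cases are checked, the assumption of \fullref{lemma:from-def-to-real} is satisfied, which via \fullref{lemma:from-def-to-real2} completes the proof of \fullref{theorem:funclink}.
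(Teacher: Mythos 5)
Your proposal follows essentially the same route as the paper: restrict both sides of $\MM{15}$ to simple resolutions, write out the composites of the normalized Reidemeister $2$ foams $F_2^{\pm}$, $\epsilon G_2^{\pm}$ and the saddle maps $M_1$, observe that the scalar normalizations on the two sides agree, and conclude that the composite foams differ only by isotopy (the paper does not even need the \fcd simplification machinery or a reduction of variants via $\MM{6}$--$\MM{14}$; it simply checks the reading directions, the case $b\geq a$ and the orientation variants directly). The plan is correct, though it stops short of actually exhibiting the isotopic composite foams, which is the one concrete step the paper's proof records.
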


\begin{proof}
In the left- and right-hand cases from \fullref{fig:movie3}, 
the movie takes the following form on the simple resolutions
for $a\geq b$:
\begin{align*}
&
\xy
\xymatrix{
\xy
(0,0)*{\includegraphics[scale=1.15]{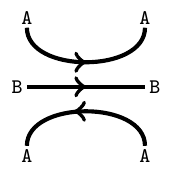}};
\endxy
\ar@<2pt>[r]^{F_2^+}
&
\xy
(0,0)*{\includegraphics[scale=1.15]{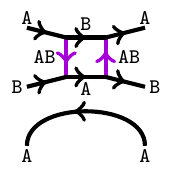}};
\endxy
\ar@<2pt>[r]^/.05cm/{M_1}
\ar@<2pt>[l]^{\epsilon G_2^+}
&\!
\xy
(0,-.75)*{\includegraphics[scale=1.15]{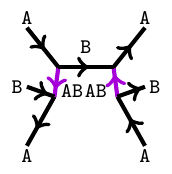}};
\endxy
\ar@<2pt>[l]^/-.05cm/{M_1}
}
\endxy\\
&
\raisebox{0.1cm}{\xy
\xymatrix{
\xy
(0,0)*{\includegraphics[scale=1.15]{figs/fig-3-104}};
\endxy
\ar@<2pt>[r]^{F_2^-}
&
\xy
(0,0)*{\includegraphics[scale=1.15]{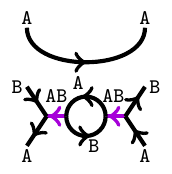}};
\endxy
\ar@<2pt>[r]^/.05cm/{M_1}
\ar@<2pt>[l]^{\epsilon G_2^-}
&\!
\xy
(0,-.75)*{\includegraphics[scale=1.15]{figs/fig-3-106}};
\endxy
\ar@<2pt>[l]^/-.05cm/{M_1}
}
\endxy}
\end{align*}
As before, it remains to compare the composite foams since 
the scalars are the same. For reading left-to-right we get the following two isotopic foams:
\[
\xy
(0,0)*{\includegraphics[scale=1.15]{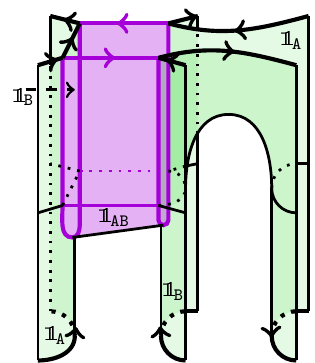}};
\endxy
\quad,\quad
\xy
(0,0)*{\includegraphics[scale=1.15]{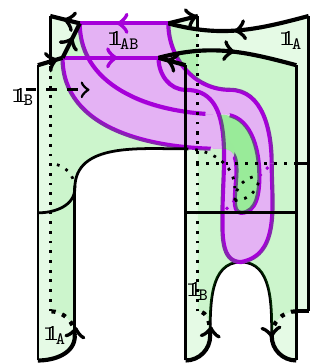}};
\endxy
\]
The other reading direction, 
the case $b\geq a$, as well as the variant 
obtained by switching orientations of the $a$-labeled strands  
work similar.
\end{proof}

\begin{remark}\label{remark:integrality-4}
(Integrality.) 
We know that also in the integral 
case, the chain maps on both sides of the 
movie moves differ by at most a scalar. That 
these scalars are all equal to one follows by 
extending scalars to $\C$, specializing via $\spec$ 
and using the results of this section. This is possible 
since the scaling for Reidemeister homotopy equivalences 
used here is integral by \fullref{lemma:integral}.
\end{remark}

%
\bibliographystyle{alphaurl}
\bibliography{functoriality}
\end{document}